\title{Multipliers and invariants of endomorphisms of projective space in dimension greater than 1}
\author{Benjamin Hutz}
\address{
Department of Mathematics and Statistics\\
Saint Louis University\\
St. Louis, MO}
\email{benjamin.hutz@slu.edu}
\subjclass[2010]{
37P45   	%Families and moduli spaces
(37P05,   	%Polynomial and rational maps
37A35)   	%Entropy and other invariants, isomorphism, classification
}
\keywords{dynamical systems, multiplier invariants, moduli space}
\definecolor{green}{rgb}{0,0.5,0}
\definecolor{dkgreen}{rgb}{0,0.6,0}
\definecolor{gray}{rgb}{0.5,0.5,0.5}
\definecolor{mauve}{rgb}{0.58,0,0.82}
\scriptsize\color{black},  % the style that is used for the line-numbers
\definecolor{orange}{rgb}{1,0.65,0.17}
\def\C{\mathbb{C}}
\def\Z{\mathbb{Z}}
\def\Q{\mathbb{Q}}
\def\P{\mathbb{P}}
\def\A{\mathbb{A}}
\def\O{\mathcal{O}}
\def\calM{\mathcal{M}}
\DeclareMathOperator{\Gal}{Gal} \DeclareMathOperator{\Hom}{Hom}
\DeclareMathOperator{\Per}{Per} 
\DeclareMathOperator{\PGL}{PGL} \DeclareMathOperator{\GL}{GL}
\DeclareMathOperator{\Fix}{Fix}
\DeclareMathOperator{\SL}{SL}
\DeclareMathOperator{\Res}{Res}
 \DeclareMathOperator{\Proj}{Proj}
\DeclareMathOperator{\charpoly}{charpoly}
\DeclareMathOperator{\Part}{Part}
\theoremstyle{plain}% default
\newtheorem{thm}{Theorem}[section]
\newtheorem*{thm*}{Theorem}
\newtheorem*{thmA}{Theorem A}
\newtheorem*{thmB}{Theorem B}
\newtheorem*{thmC}{Theorem C}
\newtheorem{lem}[thm]{Lemma}
\newtheorem{prop}[thm]{Proposition}
\newtheorem*{prop*}{Proposition}
\newtheorem{cor}[thm]{Corollary}
\theoremstyle{definition}
\newtheorem{defn}[thm]{Definition}
\newtheorem{conj}[thm]{Conjecture}
\newtheorem{exmp}[thm]{Example}
\newtheorem*{exmp*}{Example}
\newtheorem{algo}[thm]{Algorithm}
\theoremstyle{remark}
\newtheorem*{rem}{Remark}
\begin{document}
\maketitle

\begin{abstract}
     There is a natural conjugation action on the set of endomorphism of $\P^N$ of fixed degree $d \geq 2$. The quotient by this action forms the moduli of degree $d$ endomorphisms of $\P^N$, denoted $\calM_d^N$. We construct invariant functions on this moduli space coming from to set of multiplier matrices of the periodic points. The basic properties of these functions are demonstrated such as that they are in the ring of regular functions of $\calM_d^N$, methods of computing them, as well as the existence of relations. The main part of the article examines to what extend these invariant functions determine the conjugacy class in the moduli space. Several different types of isospectral families are constructed and a generalization of McMullen's theorem on the multiplier mapping of dimension 1 is proposed. Finally, this generalization is shown to hold when restricted to several specific families in $\calM_d^N$.
\end{abstract}

\section{Introduction}
    Let $f:\P^N \to \P^N$ be a degree $d \geq 2$ endomorphism, i.e., defined by an ($N+1$)-tuple of homogeneous degree $d$ polynomials with no common zeroes. We define the $n$th iterate of $f$ for $n \geq 1$ as $f^n = f \circ f^{n-1}$ with $f^0$ the identity map. Let $\Hom_d^N$ be the set of all such endomorphisms. There is a natural conjugation action of $\Hom_d^N$ by elements of $\PGL_{N+1}$ (the automorphism group of $\P^N$), and we denote $f^{\alpha} = \alpha^{-1} \circ f \circ \alpha$ for $\alpha \in \PGL_{N+1}$. Conjugation preserves the intrinsic properties of a dynamical system, so we consider the quotient $\calM_d^N = \Hom_d^N/\PGL_{N+1}$. This quotient exists as a geometric quotient ($N=1$: \cite{Milnor,Silverman9}, $N > 1$: \cite{Levy, petsche}) and is called the \emph{moduli space of dynamical systems of degree $d$ on $\P^N$}. These moduli spaces (and their generalizations) have received much study; for example, see \cite{DeMarco, JDoyle2, Manes3, McMullen2, Milnor, Silverman9}.

    Note that the action of $\PGL_{N+1}$ on $\Hom_d^N$ induces an action on the ring of regular functions $\Q[\Hom_d^N]$ of $\Hom_d^N$. Hence, it makes sense to talk about functions in $\Q[\Hom_d^N]$ that are invariant under the action of $\PGL_{N+1}$. We denote the set of such functions as $\Q[\Hom_d^N]^{\PGL_{N+1}}$. The action of $\PGL_{N+1}$ can be lifted to an equivalent action of $\SL_{N+1}$ and it is often easier to work with this action. In dimension 1, Silverman \cite{Silverman9} proves that the ring of regular functions of the moduli space is exactly this set of invariant functions, i.e., $\Q[\calM_d^1] = \Q[\Hom_d^1]^{\SL_{2}}$. In Proposition \ref{prop_regular}, we prove the general statement for $\Hom_d^N$ and in Theorem \ref{thm_sigma_regular} prove that the multiplier invariants $\sigma_{i,j}^{(n)}$ defined in Section \ref{sect_mult_inv} are in $\Q[\calM_d^N]$. These results are summarized in the following theorem, with more detailed statements in the main body.
    \begin{thmA}
        The ring of regular functions of the moduli space of degree d dynamical systems on $\P^N$ satisfies $\Q[\calM_d^N] = \Q[\Hom_d^N]^{\SL_{N+1}}$. Furthermore, the $\sigma_{i,j}^{(n)}$ and the $\sigma^{\ast(n)}_{i,j}$ (defined in Section \ref{sect_mult_inv}) are regular functions on $\calM_d^N$ for all appropriate choices of $i,j,n$
    \end{thmA}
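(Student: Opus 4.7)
My plan is to split the theorem into its two assertions: first the ring equality $\Q[\calM_d^N] = \Q[\Hom_d^N]^{\SL_{N+1}}$, and then the claim that the multiplier invariants are regular on $\calM_d^N$. Since the second assertion will rest on the first, I would handle them in that order.

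For the ring equality, I would invoke geometric invariant theory. The space $\Hom_d^N$ embeds in a projective space $\P^M$ (with $M = (N+1)\binom{N+d}{d}-1$) as the open complement of the Macaulay resultant hypersurface; the complement of a hypersurface in projective space is affine, so $\Hom_d^N$ is an affine variety. Since $\SL_{N+1}$ is reductive and $\calM_d^N$ is a geometric quotient by the cited work of Levy and Petsche, the standard theorem for reductive group actions on affine schemes then gives $\Q[\calM_d^N] = \Q[\Hom_d^N]^{\SL_{N+1}}$. The passage between $\PGL_{N+1}$- and $\SL_{N+1}$-invariants is harmless because scalars act trivially on the projectivized coefficient space of $f$. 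This is precisely the argument Silverman uses when $N=1$, and it adapts verbatim.

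For the second part, I would use the ring equality to reduce to showing that each $\sigma_{i,j}^{(n)}$ and $\sigma^{\ast(n)}_{i,j}$ is polynomial in the coefficients of $f$ and invariant under the $\SL_{N+1}$-conjugation action. Invariance is the easier half: for $\alpha \in \SL_{N+1}$, the map $P \mapsto \alpha^{-1}(P)$ bijects $n$-periodic points of $f$ with those of $f^{\alpha}$, and the chain rule shows $Df^n(P)$ and $D(f^{\alpha})^n(\alpha^{-1}P)$ are conjugate matrices. Conjugate matrices share characteristic polynomials, so the inner elementary symmetric functions of eigenvalues agree at corresponding periodic points, and the outer symmetric functions over the periodic-point set are preserved by the bijection. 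Polynomiality in the coefficients is the more delicate half: the $n$-periodic scheme of $f$ is cut out on $\A^{N+1}$ by an explicit system of polynomial equations relating $f^n$ to scalar multiples of the identity, and symmetric functions of characteristic-polynomial coefficients evaluated over a zero-dimensional subscheme can be expressed via Chow forms or Poisson-product formulas for resultants, yielding polynomials in the coefficients of $f$.

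The expected main obstacle is controlling the boundary behavior where periodic points collide, develop higher multiplicity, or move between affine charts. To obtain genuine regularity on all of $\Hom_d^N$ rather than just on a generic open subset, one must assign the correct scheme-theoretic multiplicities and check that the resulting sums remain polynomial through these degenerations. I expect the cleanest route is to first work on the dense open locus of $f$ whose $n$-periodic points are simple and distinct, where the sums are manifestly symmetric functions of the roots of an explicit polynomial system, and then extend to all of $\Hom_d^N$ by a continuity or resultant argument, mirroring the dimension-one construction of the classical multiplier symmetric functions.
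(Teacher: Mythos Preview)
Your argument for the ring equality $\Q[\calM_d^N] = \Q[\Hom_d^N]^{\SL_{N+1}}$ and for conjugation-invariance of the $\sigma_{i,j}^{(n)}$ via the chain rule matches the paper's approach (Proposition~\ref{prop_regular} and Lemma~\ref{lem_conj_mult}) essentially verbatim.

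The gap is in your regularity argument. You set as your goal that each $\sigma_{i,j}^{(n)}$ be ``polynomial in the coefficients of $f$,'' but this is not the right target and is not what is true in general: regularity on $\Hom_d^N$ means membership in $\Q[a_{kl},\rho^{-1}]_{(0)}$, the degree-zero rational functions whose denominator is a power of the Macaulay resultant $\rho$. The substantive content is thus that the only poles of $\sigma_{i,j}^{(n)}$ lie along $\{\rho=0\}$, and neither your Chow-form sketch nor your generic-locus-plus-continuity plan supplies this. (Your Chow-form idea is in fact what the paper uses in Section~\ref{sect_computing} for \emph{computation}, where it already runs into difficulties: the system has the wrong shape for a classical resultant, and Groebner-basis elimination loses multiplicities.)

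The paper takes a different and more direct route for regularity. After reducing to fixed points of $f^n$ (using that $\rho^{(n)}$ is a power of $\rho$), it first uses a Galois-descent argument over $K=\Q(a_{kl})$: the fixed points and their multiplier polynomials form $\Gal(\overline{K}/K)$-stable sets, so the symmetric functions $\sigma_{i,j}^{(1)}$ lie in $K$; homogeneity under $f\mapsto cf$ places them in $K^{(0)}$. It then locates the poles explicitly. Dehomogenizing at index $b$, each entry of the Jacobian is $\partial\phi_v/\partial x_w = \big((\partial f_v/\partial x_w)f_b - (\partial f_b/\partial x_w)f_v\big)/f_b^2$, so a pole in that chart requires $f_b(\alpha)=0$. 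Since the multiplier is independent of the dehomogenization chosen, $\sigma_{i,j}^{(1)}$ is undefined only when $f_b(\alpha)=0$ for \emph{every} $b$ simultaneously, which is precisely the condition $\rho=0$. This pole analysis is the step your proposal is missing; your ``extend by continuity'' plan would still need something equivalent to it to succeed. The formal-period case $\sigma^{\ast(n)}_{i,j}$, which you do not address, is handled in the paper by observing that the inclusion--exclusion defining $\Phi_n^{\ast}(f)$ preserves $\Gal(\overline{K}/K)$-invariance, after which the same argument applies.
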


    One of the early motivations for studying invariant functions on the moduli spaces comes from Milnor \cite{Milnor}. Milnor proved that $\calM_2^1(\C) \cong \A^2(\C)$ by giving an explicit isomorphism utilizing invariant functions constructed from the multipliers of the fixed points (extended to schemes over $\Z$ by Silverman \cite{Silverman9}). Recall that for a fixed point $z$ of a rational function $\phi$, the multiplier at $z$ is defined as $\lambda_z = \phi'(z)$. Given the multipliers of the three fixed points $\lambda_1$, $\lambda_2$, and $\lambda_3$ of a degree 2 endomorphism of $\P^1$, we can define three invariant functions $\sigma_1$, $\sigma_1$, and $\sigma_3$ as
    \begin{equation*}
        (t-\lambda_1)(t-\lambda_2)(t-\lambda_3) = \sum_{i=0}^3 (-1)^i\sigma_{i}t^{3-i},
    \end{equation*}
    where $t$ is an indeterminate.
    In particular, $\sigma_i$ is the $i$-th elementary symmetric polynomial evaluated on the multipliers of the fixed points. Because the set of multipliers of the fixed points (the multiplier spectrum) are, at worst, permuted under conjugation, the values  $\sigma_1$, $\sigma_2$, and $\sigma_3$ are invariants of the conjugacy class. Milnor's isomorphism is then given explicitly as
    \begin{align*}
        \calM_2^1 &\to \A^2\\
        [f] &\mapsto (\sigma_1,\sigma_2).
    \end{align*}

    Milnor's map can be extended to any degree as
    \begin{align*}
        \tau_{d,1}: \calM_d^1 &\to \A^{d}\\
        [f] &\mapsto (\sigma_1,\ldots,\sigma_{d}).
    \end{align*}
    Note that we utilize only the first $d$ of the $d+1$ fixed point multiplier invariants since there is the following relation (see Hutz-Tepper \cite{Hutz10} or Fujimura-Nishizawa \cite[Theorem 1]{Fujimura})
    \begin{equation*}
        (-1)^{d+1}\sigma_{d+1} + (-1)^{d-1}\sigma_{d-1} + (-1)^{d-2} 2\sigma_{d-2} + \cdots  - (d-1)\sigma_{1} +d=0.
    \end{equation*}
    For degree larger than $2$, the map $\tau_{d,1}$ is no longer an isomorphism. When restricted to polynomials, Fujimura \cite{Fujimura} provided the cardinality of a generic fiber is $(d-2)!$ and Sugiyama gave an algorithm to compute the cardinality of any fiber \cite{Sugiyama2}. However, less is known for rational functions. McMullen \cite{McMullen2} showed that when $\tau_{d,1}$ is extended to include symmetric functions of the multipliers of periodic points of higher period, denoted $\tau_{d,n}$, the resulting map is generically finite-to-one away from the Latt\`es maps. One open question in this area is to determine the cardinality of a generic fiber of this generalization. The author and Michael Tepper studied this problem for degree 3 rational functions \cite{Hutz10} and proved that the cardinality of $\tau_{d,2}$ is generically at most $12$, i.e., when including the multiplier invariants associated to the fixed points and periodic points of period $2$. Levy studied $\tau_{d,n}$ in dimension 1 in positive characteristic \cite{Levy4}.

    In dimension greater than $1$, these problems have received little study. There is a series of papers by Guillot from the complex perspective studying the eigenvalues of the multiplier matrices of the fixed points in certain special cases \cite{Guillot,Guillot3,Guillot4,Guillot2}. The parts of his work most related to this article are mainly in the recent preprint \cite{Guillot4} and concern relations among the eigenvalues of the fixed point multipliers for quadratic self-maps of $\P^2$ as well as how well these eigenvalues are able to determine the map up to linear equivalence. While there is some small overlap of results on $\P^2$ with his sequence of papers (mainly Corollary \ref{cor_monic_finite}), the methods are entirely different and the focus here is on moduli invariants in general.

    With the invariant functions defined in Section \ref{sect_mult_inv}, we are able to conjecture in Section \ref{sect_mcmullen} an analog of McMullen's Theorem for $\calM_d^N$ and $N \geq 2$ (Conjecture \ref{conj_mcmullen}). We then go on to prove in that section that the multiplier map $\tau_{d,n}^N$ is finite-to-one for certain specific families as well as provide a number of examples of infinite families whose image is a single point (similar to the Latt\`es families in dimension $1$). These families, whose multiplier spectrums are the same for every member of the family, are called \emph{isospectral} (see Definition \ref{defn_isospectral}). These results provide some partial answers to questions raised during the Bellairs Workshop on Moduli Spaces and the Arithmetic of Dynamical Systems in 2010 and subsequent notes published by Silverman \cite[Question 2.43]{Silverman20} and also raised in Doyle-Silverman \cite[Question 19.5]{JDoyle2}.

    The following theorem summarizes the results of this article on isospectral families.
    \begin{thmB}
        The following constructions produce isospectral families.
        \begin{enumerate}
            \item (Theorem \ref{iso_symmetric}) Let $f_a:\P^1 \to \P^1$ be a family of Latt\`es maps of degree $d$. Then the $k$-symmetric product $F$ is an isospectral family in $\Hom_d^k$.
            \item (Theorem \ref{thm_cart_prod})
                Let $f_a:\P^N \to \P^N$ and $g_b:\P^M \to \P^M$ be isospectral families of morphisms with $\deg(f_a) = \deg(g_b)$, then the cartesian product family $h_{a,b} = f_a \times g_b$ is isospectral in $\Hom_d^{N+M+1}$.

            \item (Theorem \ref{iso_segre}) Let $f_a:\P^N \to \P^N$ be an isospectral family of degree $d$ and $g:\P^M \to \P^M$ the degree $d$ powering map. Then the family of endomorphisms of $h_{a}:\P^{(N+1)(M+1)-1} \to \P^{(N+1)(M+1)-1}$ induced by the Segre embedding of $f_a \times g$ is isospectral in $\Hom_d^{(N+1)(M+1)-1}$.
        \end{enumerate}
    \end{thmB}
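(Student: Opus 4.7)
The plan is to handle each part by a common structural strategy: in each construction, show that every periodic point of the resulting map is determined by periodic points of the constituent maps, and that the eigenvalues of the multiplier matrix at each such periodic point are explicit functions of the multipliers of the constituents. Granting this, since the constituent families have constant periodic multiplier spectra by hypothesis (or by the defining property of Latt\`es maps in part (1)), the multiplier spectrum of the constructed family is invariant in the parameters, which is exactly isospectrality.

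For part (1), a family of Latt\`es maps $f_a:\P^1\to\P^1$ has constant periodic multiplier spectrum, a classical consequence of the commutative diagram with an endomorphism of an elliptic curve. The induced map $F_a$ on $\Sym^k(\P^1)\cong\P^k$ has periodic points of formal period $n$ in bijection with unordered $k$-multisets of period-$n$ points of $f_a$. At a multiset of distinct entries, one lifts locally to $(\P^1)^k$, where $F_a^n=f_a^{n,\times k}$ has multiplier eigenvalues $\lambda_{f_a^n,p_1},\ldots,\lambda_{f_a^n,p_k}$; multisets with coincidences are handled by a jet argument giving eigenvalues that remain polynomial expressions in the $\lambda_{f_a^n,p_i}$. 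All these eigenvalues are independent of $a$, so $F_a$ is isospectral.

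For part (2), the periodic points of $h_{a,b}[x\col y]=[f_a(x)\col g_b(y)]$ split into those on $\{x=0\}$ or $\{y=0\}$ (reducing to periodic points of $g_b$ or $f_a$ inside $\P^{N+M+1}$) and those of the form $[\tilde x\col\tilde y]$ with $\tilde x,\tilde y$ nonzero lifts of period-$n$ points of $f_a,g_b$ glued by a $(d^n-1)$th root of unity that aligns their scaling factors. Computing in an adapted affine chart, the multiplier matrix of $h_{a,b}^n$ is expressed explicitly in terms of the multiplier matrices of $f_a^n,g_b^n$ at the component periodic points and the common scaling factor; its eigenvalues depend only on those of $f_a^n$ and $g_b^n$. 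For part (3), the choice of $g$ as the powering map makes the Segre-induced extension canonical: writing $z_{ij}=x_iy_j$ and using the identity $x^\alpha y_j^d=\prod_k z_{kj}^{\alpha_k}$ for $|\alpha|=d$, one obtains $(h_a(z))_{ij}=f_a(z_{\cdot,j})_i$, so $h_a$ acts on $\P^{(N+1)(M+1)-1}$ column-wise, applying $f_a$ independently to each of the $M+1$ columns of $z$. The periodic points, the multiplier matrix, and hence the multiplier spectrum all decompose column-wise and depend only on the spectrum of $f_a$.

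The main obstacle I expect is the careful tracking of periodic points outside the principal stratum: those on the coordinate subspaces in part (2) and those with some zero columns in part (3), together with the projective scaling normalization needed to convert an affine Jacobian computation into an intrinsic multiplier matrix. Part (1) is the lightest technically given the classical theory of Latt\`es maps, and the main subtlety there is extending the multiplier computation from multisets with distinct entries to multisets with coincidences.
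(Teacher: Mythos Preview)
Your overall strategy matches the paper's: classify the periodic points of the constructed map in terms of those of the constituents, and show the multiplier eigenvalues at each are determined by the constituent multipliers (plus data independent of the parameters). For part~(1) the paper simply cites \cite{Hutz18} for the fact that symmetric-product multipliers depend only on those of $f$; your multiset analysis is a reasonable sketch of why. For part~(3) the paper makes exactly your observation that with $g$ the powering map the Segre-induced map is, after a coordinate permutation, the cartesian product of $M+1$ copies of $f_a$, and then applies part~(2) inductively rather than arguing column-wise directly; the two are equivalent.

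The one place your sketch underestimates the work is part~(2). At a ``generic'' fixed point (your $[\tilde x:\tilde y]$ with both blocks nonzero), the multiplier matrix of $h$ is $(N+M+1)\times(N+M+1)$, while the multiplier matrices of $f$ and $g$ together account for only $N+M$ eigenvalues. The paper's key computation is that the remaining eigenvalue is exactly $d$: dehomogenizing at a nonzero coordinate of $Q_g$, the upper-left $(N+1)\times(N+1)$ block is $\tfrac{1}{g_i}\bigl(\partial f_k/\partial x_l\bigr)_{k,l}$, and one verifies that $d$ is an eigenvalue by applying Euler's identity $df_k=\sum_l x_l\,\partial f_k/\partial x_l$ at the fixed point to exhibit a linear dependence among the columns of $J_f-dI$. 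Your phrase ``its eigenvalues depend only on those of $f_a^n$ and $g_b^n$'' is adequate for isospectrality since $d$ is constant, but be aware that this extra eigenvalue is not assembled from the constituent multipliers and needs its own argument; without it the eigenvalue count does not close.
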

    The following theorem summarizes the results of this article that prove special cases where the multiplier map is finite-to-one (Conjecture \ref{conj_mcmullen}).
    \begin{thmC}
        The multiplier map is finite-to-one when restricted to the following families.
        \begin{enumerate}
            \item (Theorem \ref{finite_split}) The fixed point multiplier map, $\tau_{d,1}^{N}$, is (generically) $((d-2)!)^{N}$-to-one when restricted to split polynomial endomorphisms.
            \item (Theorem \ref{thm_triangular})
                The fixed point multiplier map $\tau_{d,1}^N$ is (generically) finite-to-one when restricted to triangular polynomial endomorphisms.
            \item (Corollary \ref{cor_monic_finite}) The fixed point multiplier map $\tau_{2,1}^2$ is (generically) finite-to-one when restricted to monic polynomials of the form \eqref{eq_normal_form}. The explicit hypersurface given by the image of $\tau_{2,1}^2$ restricted to this family is given in Theorem \ref{thm_monic_poly}.
        \end{enumerate}
    \end{thmC}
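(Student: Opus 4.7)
Theorem C bundles three independent statements proved by distinct methods, so I would attack each separately. The common idea in parts (1) and (2) is that the structural form of the endomorphism forces the Jacobian at every fixed point to be diagonal or upper triangular, which causes the multiplier invariants $\sigma_{i,j}^{(1)}$ to decompose as polynomials in lower-dimensional multiplier data. Residual finiteness then follows from the dimension-$1$ theory (Fujimura's generic count of $(d-2)!$ for polynomials) together with a combinatorial quotient by the stabilizer inside $\PGL_{N+1}$ of the family in question.

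For part (1), I write a split polynomial map in affine coordinates as $f=(f_1(x_1),\ldots,f_N(x_N))$. The fixed points are products $(p_1,\ldots,p_N)$ with $f_i(p_i)=p_i$, and the Jacobian there is the diagonal matrix with entries $f_i'(p_i)$. The multiplier spectrum of $f$ is therefore determined by the $N$ individual dimension-$1$ multiplier spectra, and the global invariants $\sigma_{i,j}^{(1)}$ are explicit symmetric functions in those vectors, further symmetrized by the $S_N$-action permuting the coordinates. Recovering the unordered multiset of dimension-$1$ spectra from $\sigma_{i,j}^{(1)}$ is a finite problem; each dimension-$1$ spectrum generically pins down its polynomial up to $(d-2)!$ choices; and checking that the $S_N$-action is generically free on the relevant locus yields exactly $((d-2)!)^N$ preimages.

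For part (2), I induct on $N$. The Jacobian is upper triangular with diagonal entries $\partial f_i/\partial x_i$, so the eigenvalues at any fixed point are precisely these partials. The first coordinate $f_1(x_1)$ is a univariate degree-$d$ polynomial whose fixed-point multipliers appear, with appropriate multiplicities, among the eigenvalues collected by $\sigma_{1,\bullet}^{(1)}$; the dimension-$1$ theorem recovers $f_1$ up to finitely many choices. For each reconstructed $f_1$ and each of its fixed points $p_1$, the slice $(x_2,\ldots,x_N)\mapsto(f_2(p_1,x_2),\ldots,f_N(p_1,\ldots,x_N))$ is a triangular endomorphism of $\A^{N-1}$, and its fixed-point multiplier data can be read off from the original invariants once stratified by $p_1$. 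The inductive hypothesis then produces finitely many lifts, and combining the choices of $f_1$ with the inductive lifts gives the finite fiber.

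For part (3), the normal form \eqref{eq_normal_form} is chosen so that $\tau_{2,1}^2$ becomes a morphism between affine varieties of equal dimension, reducing generic finiteness to nonvanishing of a Jacobian determinant. I would fix a formal parametrization, compute the four affine fixed points and their $2\times 2$ Jacobians explicitly, form the characteristic polynomials, and run the resulting polynomial system through a resultant or Gr\"obner elimination to both establish finiteness and extract the image hypersurface promised by Theorem \ref{thm_monic_poly}. I expect part (3) to be the main obstacle: even after restricting to the normal form, the elimination is heavy, and a clean proof requires choosing the parametrization so that the generic Jacobian determinant is tractable. A subtler point in (1) is verifying that the stabilizer of the split form inside $\PGL_{N+1}$, intersected with the polynomial-preserving normalizers, is exactly $(\PGL_2)^N \rtimes S_N$, so that the final count is $((d-2)!)^N$ rather than some multiple or divisor.
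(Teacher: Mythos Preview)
Your approaches to parts (1) and (3) match the paper's closely. For (1), the paper likewise exploits the diagonal Jacobian, invokes Fujimura's $(d-2)!$ count coordinatewise, and quotients by the $S_N$ coordinate permutations (which act by conjugation, Lemma~\ref{lem_split_permutations}). The one refinement you leave implicit is the paper's argument that the eigenvalue multisets can be \emph{uniquely} partitioned into the $N$ coordinate spectra, using the known repetition pattern of each $F_i$-multiplier across the $d^N$ affine fixed points together with the identifiable fixed points at infinity. For (3), the paper also computes $\Sigma_1(f)$ symbolically, extracts five generating invariants, and takes an elimination ideal to produce a dominant morphism $\A^4 \to X \subset \A^5$ onto an explicit hypersurface; generic finiteness then follows from the dimension count rather than a Jacobian-determinant check, but the spirit is the same.

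Part (2) is where your plan diverges and has a genuine gap. The paper does not induct on $N$ by slicing to $\A^{N-1}$; it proceeds one coordinate function at a time. Having reconstructed $F_1,\ldots,F_{k-1}$, it notes that at each fixed point $p \in \A^{k-1}$ of $(F_1,\ldots,F_{k-1})$ the univariate polynomial $F_k(p,x_k)$ is pinned down by Fujimura up to finitely many choices, and then recovers $F_k$ itself by \emph{multivariate Lagrange interpolation} in $(x_1,\ldots,x_{k-1})$. The technical core is justifying this interpolation: first a combinatorial count (Lemma~\ref{lem_interpolation_count}, $\binom{d+n}{d} \le \sum_{i=0}^n d^i$) ensures there are at least as many fixed points as unknown coefficients; second, a genericity argument shows the linear system has full rank by rephrasing degeneracy as the fixed points of $(F_1,\ldots,F_{k-1})$ all lying on a degree-$\le d$ hypersurface in $\P^{k-1}$, observing this is a closed condition on $\calM_d^N$, and exhibiting one map (the $d$th-power map) that avoids it.

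Your slicing scheme runs into the same interpolation problem but does not confront it. Knowing each slice $(f_2(p_1,\cdot),\ldots,f_N(p_1,\cdot))$ up to finitely many possibilities, for each of the finitely many $p_1$, does not yet give finitely many global $(f_2,\ldots,f_N)$: you must still glue across the $p_1$'s and argue the result is determined, which is exactly the content you skipped. A further wrinkle is that your inductive hypothesis returns conjugacy classes in $\calM_d^{N-1}$ rather than actual maps, so before interpolating you would have to choose slice-by-slice normal forms that vary polynomially in $p_1$; the paper sidesteps this by fixing a single global normal form for $F$ and interpolating scalar coefficients of one univariate $F_k(\,\cdot\,,x_k)$ at a time.
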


    It should be noted that while this article was being prepared John Doyle and Joseph Silverman released an article \cite[Section 19]{JDoyle2} that touches briefly on constructing invariants on $\calM_d^N$. They construct a similar version of invariant functions from multipliers without the associated study of the ring of regular functions $\Q[\calM_d^N]$ and mainly raise a number of interesting questions related to these invariant functions. Consequently, their article can almost be considered as additional motivation for this work as, while there is some overlap of subject, there is little overlap of results. Further, we find our fuller construction of the invariant functions obtained from multipliers more useful in computations such as those in Theorem \ref{thm_monic_poly} due to their lower complexity. The hope is that this initial study of multiplier invariants in higher dimensions will lead to many fruitful results.

    The organization of the article is as follows. Section \ref{sect_mult_inv} defines the multiplier invariants and studies their basic properties. Section \ref{sect_gen} discusses generators and relations among the invariants. Section \ref{sect_computing} gives an algorithm to compute the invariants without computing either the periodic points or their multiplier matrices. Section \ref{sect_mcmullen} conjectures a higher dimensional statement of McMullen's theorem and gives both isospectral families and families that are finite-to-one under $\tau_{d,n}^N$.

\section{Defining the Multiplier Invariants} \label{sect_mult_inv}

    Let $f \in \Hom_d^N$. We designate the \emph{$n$-th iterate} of $f$ with exponential notation $f^n = f \circ f^{n-1}$. For an element $\alpha \in \PGL_{N+1}$, we denote the \emph{conjugate} as $f^{\alpha} = \alpha^{-1} \circ f \circ \alpha$. At each fixed point $P$, the map induced on the tangent space by $f$, $df_P$, is an element of $\GL_N$, after choosing a basis. While the resulting matrix is not independent of this choice of basis, the characteristic polynomial of this matrix is independent of this choice of basis. We will denote the set of fixed points of $f$ as $\Fix(f)$ and the points of period $n$ for $f$ as $\Per_n(f)$.
    \begin{defn}
        Define the \emph{multiplier polynomial} at $P \in \Fix(f)$ to be the characteristic polynomial of $df_P$ denoted $\gamma_{f,P}$. Its eigenvalues will be denoted $\lambda_{P,1}, \ldots, \lambda_{P,N}$. This definition can be extended to periodic points of any order by considering the fixed points of the iterate $f^n$. When needed, we will call $df_P$ the \emph{multiplier matrix at $P$}.

        The \emph{polynomial $n$-multiplier spectrum} of $f$ is the set of multiplier polynomials for the periodic points of period $n$
        \begin{equation*}
            \Gamma_n(f) = \{ \gamma_{f^n,P} : P \in \Per_n(f)\},
        \end{equation*}
        and the \emph{$n$-multiplier spectrum} is the set of eigenvalues (with multiplicity) of the multiplier matrices or, equivalently, the zeros of the multiplier polynomials
        \begin{equation*}
            \Lambda_n(f) = \{\lambda_{P,1},\ldots, \lambda_{P,N} : P \in \Per_n(f)\}.
        \end{equation*}
        The map $f$ will be dropped from the notation, when it is clear what map is being referred to.
    \end{defn}

    When counted with multiplicity, there are $D_n=\frac{d^{n(N+1)}-1}{d-1}$ points of period $n$ for $f$. We can make similar definitions when working with the points of formal period $n$; the designation will be marked by a superscript asterisk; e.g.,
    \begin{equation*}
        \Gamma_n^{\ast}(f) = \{ \gamma_{f,P} : P \in \Per^{\ast}_n(f)\}.
    \end{equation*}
    Since we will only mention the formal periodic point case in passing, we leave the definitions and properties to the references; see Hutz \cite{Hutz1}.

    We first see that multiplier polynomials are invariant under conjugation by an element of $\PGL_{N+1}$.
    \begin{lem}\label{lem_conj_mult}
        Let $f \in \Hom_d^N$ and $P \in \Fix(f)$ and $\alpha \in \PGL_{N+1}$. Then $\gamma_{f,P} = \gamma_{f^{\alpha},\alpha^{-1}(P)}$.
    \end{lem}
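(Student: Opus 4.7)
The plan is to apply the chain rule to $f^{\alpha} = \alpha^{-1} \circ f \circ \alpha$ at the point $Q := \alpha^{-1}(P)$ and observe that the resulting multiplier matrix is similar (as a linear endomorphism of the tangent space) to $df_P$. Since similar matrices have identical characteristic polynomials, and the multiplier polynomial was defined to be precisely this characteristic polynomial (independent of choice of basis), the identity $\gamma_{f,P} = \gamma_{f^{\alpha},Q}$ will follow.

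More concretely, first note that $Q = \alpha^{-1}(P)$ is a fixed point of $f^{\alpha}$, since $f^{\alpha}(Q) = \alpha^{-1}(f(\alpha(\alpha^{-1}(P)))) = \alpha^{-1}(f(P)) = \alpha^{-1}(P) = Q$, so the multiplier polynomial at $Q$ is indeed well-defined. The chain rule then gives
\begin{equation*}
    d(f^{\alpha})_Q \;=\; d(\alpha^{-1})_{f(\alpha(Q))} \circ df_{\alpha(Q)} \circ d\alpha_Q \;=\; d(\alpha^{-1})_{P} \circ df_{P} \circ d\alpha_Q,
\end{equation*}
as maps $T_Q\P^N \to T_Q\P^N$, where I used $\alpha(Q) = P$ and $f(P) = P$. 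Differentiating the identity $\alpha^{-1} \circ \alpha = \mathrm{id}_{\P^N}$ at $Q$ shows $d(\alpha^{-1})_P = (d\alpha_Q)^{-1}$, so
\begin{equation*}
    d(f^{\alpha})_Q \;=\; (d\alpha_Q)^{-1} \circ df_P \circ d\alpha_Q.
\end{equation*}

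After choosing any basis of $T_Q\P^N$, the matrix of $d(f^{\alpha})_Q$ is therefore conjugate (in $\GL_N$) to the matrix of $df_P$ expressed in the basis of $T_P\P^N$ obtained by pushing forward through $d\alpha_Q$. Conjugate matrices have the same characteristic polynomial, so $\gamma_{f^{\alpha},Q} = \gamma_{f,P}$ as desired.

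There is no real obstacle here; the only subtlety is keeping track of the fact that $df_P$ and $d(f^{\alpha})_Q$ live on different tangent spaces (at $P$ and at $Q$ respectively), so the conjugation identity must be interpreted via the isomorphism $d\alpha_Q : T_Q\P^N \to T_P\P^N$, after which the conclusion is immediate from the basis-independence of the characteristic polynomial already noted in the setup of the definition.
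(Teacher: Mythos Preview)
Your proof is correct and follows essentially the same idea as the paper: apply the chain rule to $f^{\alpha}=\alpha^{-1}\circ f\circ\alpha$ and use that conjugate (similar) linear maps have the same characteristic polynomial. The only difference is cosmetic: the paper fixes an affine chart containing both $P$ and $\alpha^{-1}(P)$, computes with explicit Jacobian matrices, and reaches the conclusion via the trick $\charpoly(ABC)=\charpoly(BCA)$ together with $\frac{\partial\alpha}{\partial\boldsymbol z}(\alpha^{-1}\tilde P)\cdot\frac{\partial\alpha^{-1}}{\partial\boldsymbol z}(\tilde P)=\mathrm{Id}$, whereas you work coordinate-free on tangent spaces and observe directly that $d(\alpha^{-1})_P=(d\alpha_Q)^{-1}$. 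Your formulation is slightly cleaner in that it sidesteps the need to arrange $P$ and $\alpha^{-1}(P)$ in a common chart; the paper's version has the minor advantage of making the computation fully explicit in coordinates, which matches the concrete Jacobian-matrix language used elsewhere in the article.
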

    \begin{proof}
        Without loss of generality, we may assume that $P$ and $\alpha^{-1}P$ are in the same affine chart. Let $\boldsymbol{x} = (x_0,\ldots,x_N)$ be coordinates for $\P^N$ and fix $j$ such that the $j$th coordinate of $P$ is not zero. Let $\boldsymbol{z}$ be coordinates of the affine chart obtained by dehomogenizing at $x_j$. Let $\phi$ be the dehomogenization of $f$, and $\tilde{P}$ the dehomogenization of $P$.
        Denoting the Jacobian matrix as $\frac{\partial \phi}{\partial \boldsymbol{z}}$, the chain rule tells us that
        \begin{align*}
            \frac{\partial \phi^{\alpha}}{\partial \boldsymbol{z}}(\alpha^{-1}\tilde{P})
            &= \frac{\partial }{\partial \boldsymbol{z}}(\alpha^{-1} \circ \phi \circ \alpha)(\alpha^{-1}\tilde{P})\\
            &=\frac{\partial \alpha^{-1}}{\partial \boldsymbol{z}}(\phi(\tilde{P})) \cdot \frac{\partial \phi}{\partial \boldsymbol{z}}(\tilde{P}) \cdot \frac{\partial \alpha}{\partial \boldsymbol{z}}(\alpha^{-1}\tilde{P})\\
            &=\frac{\partial \alpha^{-1}}{\partial \boldsymbol{z}}(\tilde{P}) \cdot \frac{\partial \phi}{\partial \boldsymbol{z}}(\tilde{P}) \cdot \frac{\partial \alpha}{\partial \boldsymbol{z}}(\alpha^{-1}\tilde{P}).
        \end{align*}
        For matrices defined over a field, we have $ABC$ and $BCA$ are similar matrices since $BCA = A^{-1} ABC A$. Consequently,
        \begin{equation*}
            \charpoly(ABC) = \charpoly(BCA)
        \end{equation*}
        so that
        \begin{equation*}
            \gamma_{f^{\alpha},\alpha^{-1}P}= \charpoly\left(\frac{\partial \phi}{\partial \boldsymbol{z}}(\tilde{P}) \cdot \frac{\partial \alpha}{\partial \boldsymbol{z}}(\alpha^{-1}\tilde{P})\cdot \frac{\partial \alpha^{-1}}{\partial \boldsymbol{z}}(\tilde{P}) \right).
        \end{equation*}
        Moreover,
        \begin{equation*}
            \frac{\partial \alpha}{\partial \boldsymbol{z}}(\alpha^{-1}\tilde{P}) \cdot \frac{\partial \alpha^{-1}}{\partial \boldsymbol{z}}(\tilde{P})  = \frac{\partial }{\partial \boldsymbol{z}}\left( \alpha \circ \alpha^{-1}\right)(\tilde{P}) = Id.
        \end{equation*}
        Therefore,
        \begin{equation*}
            \gamma_{f^{\alpha},\alpha^{-1}P} = \gamma_{f,P}
            = \charpoly\left(\frac{\partial \phi}{\partial \boldsymbol{z}}(\tilde{P})\right).
        \end{equation*}
    \end{proof}

    The fixed points depend algebraically but not rationally on the coefficients of $f$ and form an unordered set. Furthermore, Lemma \ref{lem_conj_mult} tells us that the multiplier spectra depend only on the conjugacy class of $f$. Consequently, we use the multipliers to define invariants on the moduli space.

    We are looking for functions on the moduli space, we need them to be invariant under conjugation. Conjugation can permute the fixed points. We also need to be careful with the basis for the tangent space. The eigenvalues of the Jacobian matrix are not fixed, but the symmetric functions of the eigenvalues are. So we need to find functions that are invariant under the action of the wreath product $S_N \wr S_{D_n}$, where $S_N$ is the symmetric group on $N$ elements. We can think of the wreath product as acting on $D_n$ sets of $N$ variables (the $N$ eigenvalues of each of the $D_n$ periodic point multiplier matrices). We can permute each set of variables independently and we can permute the sets of variables. In dimension 1, the multiplier matrix is a single complex number and we have $S_1 \wr S_{D_n} \cong S_{D_n}$. Furthermore, the invariant functions arising from the fixed point multipliers are generated by the elementary symmetric polynomials of the fixed point multipliers. In higher dimensions, the invariant ring of the finite group $S_N \wr S_{D_n}$ is much more complicated and can involve both primary and secondary invariants. While there is a reasonably nice description of the primary invariants of wreath products in terms of the primary invariants of the component groups \cite[Theorem 7.10]{Dias}, this seems to lead to an overly complicated system of generators (recall that the number of primary/secondary invariants is not fixed and some choices produce ``better'' sets of generators). Regardless, because of the rapid growth of the size of these wreath products, computing explicit sets of generators is not feasible even for small $N$, $n$, and $d$. We take the following approach.

    Let $t$ be the indeterminant for the characteristic polynomials and consider the polynomial in variables $(w,t)$ given by
    \begin{equation*}
        \Sigma_n(f) = \prod_{P \in \Per_n(f)} w - \gamma_{f,P} = \prod_{i=1}^{D_n} w- \prod_{j=1}^N (t-\lambda_{i,j}).
    \end{equation*}
    Label the coefficients of this polynomial by the complement of the bi-degree in $(w,t)$, i.e.,
    \begin{equation}\label{eq_sigma}
        \Sigma_n(f) = \sum_{i=0}^{D_n}\sum_{j=0}^{Ni} (-1)^{i+j}\sigma^{(n)}_{i,j}w^{D_n-i}t^{Ni-j}.
    \end{equation}
    We can make a similar definition for the formal periodic points:
    \begin{equation} \label{eq_sigma_ast}
        \Sigma_n^{\ast}(f) = \sum_{i=0}^{D^{\ast}_n}\sum_{j=0}^{Ni} (-1)^{i+j}\sigma^{\ast(n)}_{i,j}w^{D_n-i}t^{Ni-j}.
    \end{equation}

    The $\sigma^{(n)}_{i,j}$ are symmetric functions of the (coefficients of the) $\gamma_{P}$, and the (coefficients of the) $\gamma_P$ are rational functions of $P$ and the coefficients of $f$. Consequently, the $\sigma^{(n)}_{i,j}$ are rational functions on $\Hom_d^N$. We now show they are regular functions on the moduli space $\calM_d^N = \Hom_d^N/\PGL_{N+1}$.
    Similar to Silverman \cite{Silverman9}, we generalize this construction to work over $\Z$.

    Let $M = \binom{N+d}{N}$ be the number of monomials of degree $d$ in $N+1$ variables. We can identify $\P^{(N+1)M-1}$ with $N+1$ tuples of homogeneous polynomials of degree $d$. For indeterminants $a_{i,j}$, denote
    \begin{equation} \label{eq_fgen}
        f_{gen} = [f_0,\ldots,f_N] = [a_{00}x_0^d + \cdots + a_{0M}x_N^d, a_{10}x_0^d + \cdots + a_{1M}x_N^d,\ldots,a_{N0}x_0^d + \cdots + a_{NM}x_N^d].
    \end{equation}
    Let $\rho = \Res(f_0,\ldots,f_N) \in \Z[a_{ij}]$ be the Macaulay resultant. The set $\Hom_d^N$ is the open subset of $\P^{(N+1)M-1}$ defined by the condition $\rho \neq 0$. Then
    \begin{equation*}
        \Hom_d^N = \Proj \Z[a_{ij}]/\{\rho = 0\}
    \end{equation*}
    and so
    \begin{equation*}
        H^1(\Hom_d^N, \O_{\Hom_d^N}) = \Z[a_{ij}][\rho^{-1}]_{(0)},
    \end{equation*}
    where the subscript $(0)$ denotes the elements of degree $0$ (i.e., rational functions whose numerator and denominator are homogeneous of the same degree).

    \begin{prop}\label{prop_regular}
        The ring of regular functions $\Q[\Hom_d^N]$ of the affine variety $\Hom_d^N$ is given explicitly by
        \begin{equation*}
            \Q[\Hom_d^N] = \Q\left[\frac{a_{ij}^{e_{ij}}}{\rho} : \sum e_{ij} = (N+1)d^N \right].
        \end{equation*}
        Furthermore, $\Q[\calM_d^N] = \Q[\Hom_d^N]^{\SL_{N+1}}$.
    \end{prop}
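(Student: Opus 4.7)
The plan is to prove the two statements of the proposition in sequence: first the explicit generating set for $\Q[\Hom_d^N]$, and then the identification of $\Q[\calM_d^N]$ with the ring of $\SL_{N+1}$-invariants. Both parts rely on standard input (the degree of the Macaulay resultant, and affine GIT) which I would simply cite.

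For the explicit description of $\Q[\Hom_d^N]$, the key input is that the Macaulay resultant $\rho$ of $N+1$ generic forms of degree $d$ in $N+1$ variables is homogeneous of degree $d^N$ in the coefficients of each $f_i$ separately, and hence of total degree $(N+1)d^N$ in the full coefficient set $\{a_{ij}\}$. The excerpt already identifies $\Q[\Hom_d^N]$ with the degree-zero part of the localization $\Q[a_{ij}][\rho^{-1}]$. To extract explicit generators, I would take an arbitrary degree-zero element $P/\rho^k$ with $P$ homogeneous of degree $k(N+1)d^N$, expand $P$ in the monomial basis, and then partition each monomial of degree $k(N+1)d^N$ into a product of $k$ sub-monomials each of degree $(N+1)d^N$. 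Distributing one copy of $\rho$ to each factor writes $P/\rho^k$ as a $\Q$-linear combination of products of the asserted generators $a_{ij}^{e_{ij}}/\rho$ with $\sum e_{ij} = (N+1)d^N$, establishing the first claim.

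For the identification $\Q[\calM_d^N] = \Q[\Hom_d^N]^{\SL_{N+1}}$, I would first note that $\Hom_d^N$ is affine, being the complement of the single hypersurface $V(\rho)$ inside the projective space $\P^{(N+1)M-1}$, and that $\SL_{N+1}$ is reductive in characteristic zero. Mumford's theorem on affine GIT then guarantees that the categorical quotient of $\Hom_d^N$ by $\SL_{N+1}$ is affine with coordinate ring precisely the invariant subring $\Q[\Hom_d^N]^{\SL_{N+1}}$. Since the central subgroup $\mu_{N+1} \subset \SL_{N+1}$ acts trivially (scalar matrices commute with everything under conjugation), the $\SL_{N+1}$- and $\PGL_{N+1}$-invariants coincide. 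The existence of $\calM_d^N$ as a geometric quotient, as cited to Levy and Petsche, forces this geometric quotient to be isomorphic to the affine categorical quotient, and taking rings of global sections yields the claim.

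The main obstacle I foresee is carefully reconciling the geometric quotient $\calM_d^N$ produced by the cited references with the affine GIT quotient $\Spec(\Q[\Hom_d^N]^{\SL_{N+1}})$: if every point of $\Hom_d^N$ is GIT-stable then the two agree on the nose, which is a property one needs to extract from the cited constructions rather than rederive. The remainder of the argument is essentially bookkeeping once the standard degree formula for the Macaulay resultant is taken as input.
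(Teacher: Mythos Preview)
Your proposal is correct and follows the approach the paper indicates: the paper's own proof consists entirely of citations (Silverman's Proposition~4.27 for the explicit description of $\Q[\Hom_d^N]$, and Levy for the ``furthermore'' statement), while you have spelled out the underlying arguments (total degree of the Macaulay resultant plus monomial splitting for the first part, affine GIT for a reductive group for the second). The obstacle you flag---reconciling the geometric quotient with the affine GIT quotient via stability of every point of $\Hom_d^N$---is precisely what Levy's paper establishes, so that citation discharges it.
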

    \begin{proof}
        The proof follows directly from generic properties of projective and affine varieties; see, for example, the proof of Proposition 4.27 in Silverman \cite[proposition 4.27]{Silverman10}.
        See Levy \cite{Levy} for the ``furthermore'' statement.
    \end{proof}

    We now show that the $\sigma_{i,j}^{(n)}$ and $\sigma_{i,j}^{\ast(n)}$ are regular functions on $\calM_d^N$. This generalizes the dimension 1 result of Silverman \cite[Theorem 4.50]{Silverman10}.
    \begin{thm}\label{thm_sigma_regular}
        For $f \in \Hom_d^N$ and $n \geq 1$ and $i,j$ in the appropriate range, let $\sigma_{i,j}^{(n)}$ and $\sigma_{i,j}^{\ast(n)}$ be the multiplier invariants associated to $f$ as defined by equations \eqref{eq_sigma} and \eqref{eq_sigma_ast}, respectively.
        \begin{enumerate}
            \item The functions
            \begin{equation*}
                f \mapsto \sigma_{i,j}^{(n)}
                \quad \text{and} \quad
                f \mapsto \sigma_{i,j}^{\ast(n)}
            \end{equation*}
            are in $\Q[a_{kl}, \rho^{-1}]$, where $\{a_{kl}\}$ are the coefficients of a generic element $f_{gen}$ of $\Hom_d^N$ and $\rho$ the resultant of $f_{gen}$.
            \item The functions are $\PGL_{N+1}$ invariant and, hence, are in the ring of regular functions $\Q[\calM_d^N]$.
        \end{enumerate}
    \end{thm}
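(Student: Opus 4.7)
The plan is to prove part (1) first, since part (2) then follows almost immediately by combining Lemma \ref{lem_conj_mult} with Proposition \ref{prop_regular}. For part (1), I would work with the universal endomorphism $f_{gen}$ from \eqref{eq_fgen} over the ring $R = \Z[a_{ij}][\rho^{-1}]$ and interpret $\Sigma_n(f_{gen})$ as the norm of $w - \gamma_{f_{gen}, P}(t)$ along the finite flat cover $\Per_n(f_{gen}) \to \Spec R$.

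For (1) in detail: when $\rho \neq 0$, $f_{gen}$ is a morphism, so by Bézout's theorem the scheme $\Per_n(f_{gen})$, cut out in $\P^N_R$ by the vanishing of the standard defining equations for $n$-periodic points, has length exactly $D_n$ over $R$ and is finite flat. On this cover, the multiplier polynomial $\gamma_{f_{gen}, P}(t) = \det\bigl(tI - df_P\bigr)$ is an element of $\O_{\Per_n(f_{gen})}[t]$: computing $df_P$ requires dehomogenizing at some coordinate, but Lemma \ref{lem_conj_mult} (applied to a change-of-chart automorphism) guarantees that the resulting characteristic polynomial is independent of the chart. Taking the norm down to $\Spec R$ then produces
\begin{equation*}
    \Sigma_n(f_{gen})(w, t) \in R[w, t] = \Z[a_{ij}][\rho^{-1}][w, t],
\end{equation*}
and extracting bi-degree coefficients gives $\sigma_{i,j}^{(n)} \in R \subset \Q[a_{kl}, \rho^{-1}]$. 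Alternatively, one could arrive at the same conclusion by expressing $\Sigma_n(f_{gen})$ as an iterated Macaulay resultant of $w - \gamma(t)$ against the defining system for $\Per_n(f_{gen})$ and invoking classical elimination theory to bound the denominator by a power of $\rho$.

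For (2), Lemma \ref{lem_conj_mult} supplies, for each $\alpha \in \PGL_{N+1}$, a bijection $P \mapsto \alpha^{-1}(P)$ from $\Per_n(f)$ to $\Per_n(f^\alpha)$ satisfying $\gamma_{f,P} = \gamma_{f^\alpha, \alpha^{-1}(P)}$. Hence
\begin{equation*}
    \Sigma_n(f^\alpha)(w,t) = \prod_{Q \in \Per_n(f^\alpha)} \bigl(w - \gamma_{f^\alpha, Q}(t)\bigr) = \prod_{P \in \Per_n(f)} \bigl(w - \gamma_{f, P}(t)\bigr) = \Sigma_n(f)(w,t),
\end{equation*}
so every coefficient $\sigma_{i,j}^{(n)}$ is $\PGL_{N+1}$-invariant, and in particular $\SL_{N+1}$-invariant. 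Combining with (1) and Proposition \ref{prop_regular} places $\sigma_{i,j}^{(n)}$ inside $\Q[\Hom_d^N]^{\SL_{N+1}} = \Q[\calM_d^N]$. The formal case $\sigma_{i,j}^{\ast(n)}$ is handled identically, using the fact (Hutz \cite{Hutz1}) that $\Per_n^{\ast}$ is also finite flat over $\Hom_d^N$.

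The main obstacle I anticipate is the bookkeeping in (1) needed to certify that the only denominator appearing after full symmetrization is a power of $\rho$. Individual factors $w - \gamma_{f_{gen}, P}(t)$ live a priori only over extensions of the fraction field of $R$ corresponding to the Galois closure of the periodic point field, and the characteristic polynomial of $df_P$ computed in an affine chart also carries chart-dependent denominators coming from the chosen dehomogenization. That the full product descends cleanly to $R$ itself is the heart of the argument; the norm/finite-flat-cover viewpoint above is the cleanest way I know to avoid tracking these extraneous denominators one fixed point at a time.
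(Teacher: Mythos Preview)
Your proposal is correct and takes a genuinely different route from the paper. For part (1) the paper argues more classically: it first reduces to $n=1$ via Minimair's result that the resultant of $f^n$ is a power of $\rho$; then, working over the field $K=\Q(a_{kl})$, it observes that $\Fix(f_{gen})$ and the associated multipliers form $\Gal(\overline{K}/K)$-stable sets, so the symmetric functions $\sigma_{i,j}^{(1)}$ land in $K$; homogeneity in the $a_{kl}$ drops them to the degree-zero part; and finally an explicit inspection of the partial derivatives $\partial\phi_v/\partial x_w$ in each affine chart shows that the only possible pole is along $\rho=0$, since a pole in chart $b$ requires $f_b$ to vanish and the chart-independence of the multiplier forces all $f_b$ to vanish simultaneously. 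Your norm-along-a-finite-flat-cover argument packages all three of these steps (Galois descent, homogeneity, pole control) into a single move and in fact delivers the slightly sharper $\sigma_{i,j}^{(n)}\in\Z[a_{kl},\rho^{-1}]$. The cost is that you must actually justify the flatness of $\Per_n(f_{gen})\to\Spec R$ (this needs more than a bare invocation of B\'ezout: one uses that the fixed locus is locally cut out by $N$ equations pulled back from the diagonal, hence is a local complete intersection, and then applies miracle flatness over the regular base $R$) and the gluing of $\gamma$ across charts (your appeal to Lemma \ref{lem_conj_mult} is a slight stretch, since that lemma is about conjugating $f$ rather than changing affine chart, but the chain-rule computation is identical and the paper simply builds chart-independence into the definition of $\gamma_{f,P}$). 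Part (2) is handled identically in both arguments.
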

    \begin{proof}
    \hfill
    \begin{enumerate}
    \item
        From Minimair \cite[Theorem 1]{Minimair} we know that the resultant of an iterate of $f$ is a power of the resultant of $f$. Denote $\rho^{(n)}$ as the resultant of the $n$-th iterate. Then we have
        \begin{equation*}
            \Q[a_{kl}, \rho^{-1}] = \Q[a_{kl}, (\rho^{(n)})^{-1}].
        \end{equation*}
        We can replace $f$ by $f^n$ and consider only the multiplier spectrum of the fixed points.

        Let $K$ denote the field $\Q(a_{kl})$ treating $a_{kl}$ as indeterminants. The fixed points of $f_{gen}$ (defined in equation \eqref{eq_fgen}) are the common zeros of a finite collection of polynomials with coefficients in $K$. Hence, the set of fixed points $\Fix(f)$ and their multipliers $\Lambda_1(f)$ are $\Gal(\overline{K}/K)$ invariant sets. Thus, the symmetric functions $\sigma^{(1)}_{i,j}(f)$ are $\Gal(\overline{K}/K)$ fixed elements of $\overline{K}$, so are in $K$. Furthermore, $\sigma^{(1)}_{i,j}$ is homogeneous in $\{a_{kl}\}$ in the sense that $\sigma^{(1)}_{i,j}$ gives the same values for $f_{gen} = [f_0,\ldots,f_N]$ and $cf_{gen}=[cf_0,\ldots,cf_N]$ for any nonzero constant $c$. In particular, $\sigma^{(1)}_{i,j}$ is in $K^{(0)}$, the set of rational functions of $\{a_{kl}\}$ whose numerator and denominator have the same degree.

        We want to show that these are regular functions on $\calM_d^N$, so we need to check that the only poles occur where $\rho =0$. The only way to get a pole is if one of the partial derivatives in the multiplier matrix has a pole. Thus, we are looking at the denominators of the partial derivatives of a dehomogenization of $f_{gen}$. Because the multiplier is independent of the dehomogenization choice, we could equally well dehomogenize at any of the coordinates $x_0,\ldots, x_N$.
        Fix a dehomogenization index $b$; then such a partial derivative can be expressed as a rational function, for $\phi_v = \frac{f_v(x_0,\ldots,x_{b-1},1,x_{b+1},\ldots,x_N)}
        {f_{b}(x_0,\ldots,x_{b-1},1,x_{b+1},\ldots,x_N)}$, as
        \begin{equation*}
              \frac{\partial \phi_v}{\partial x_w} = \frac{(\partial f_v/\partial x_w)\cdot f_{b} - (\partial f_{b}/\partial x_w)\cdot f_v}{f_{b}^2}.
        \end{equation*}
        In particular, a pole occurs at $\alpha$ for this dehomogenization when $f_b(\alpha)=0$.
        %\open{Let $\alpha$ be a fixed point which satisfies $f_0(\alpha) =\cdots = f_N(\alpha) = 0$. Then regardless of the choice of dehomogenization, the multiplier is undefined and we have a pole for $\sigma_{i,j}^{(1)}$. Conversely, assume that we have a pole $\alpha$ for $\sigma_{i,j}^{(1)}$.We see that we must have $f_0(\alpha) = \cdots = f_N(\alpha) = 0$ and so $\rho$ must vanish. In particular, the only way for there to be a pole is when the resultant vanishes, so the only factors occuring in the denominator are powers of the resultant.}
         Since the multiplier is independent of the choice of homogenization, $\sigma_{i,j}^{(1)}$ is undefined exactly when $f_b(\alpha)$ vanishes for every $0 \leq b \leq N$ for some $\alpha$. However, this is the condition in which the denominator of $\sigma_{i,j}^{(1)}$ is some power of the resultant.

%        We will show that $G_v(\alpha)$ is a unit in $B=A[a_{1k_0}^{-1}]$ where $A = \Q[a_{kl},\rho^{-1}]$. If $G_v(\alpha)$ is not a unit, then there is some maximal ideal $\beta \subset B$ such that $G_v(\alpha) \in \beta$. But since $\alpha$ is a fixed point we have
%        \begin{equation*}
%            F_j(\alpha) - \alpha G_j(\alpha) = 0 \quad \text{for $1 \leq j \leq N$}
%        \end{equation*}
%        so that $\alpha$ is a simultaneous root of
%        \begin{equation*}
%            F_j(\alpha) = 0 \pmod{\beta} \quad \text{for $1 \leq j \leq N$}.
%        \end{equation*}
%        In particular $\rho \in \beta$ contradicting that $\rho$ is a unit in $B$. Therefore, each characteristic polynomial is regular in $A[a_{1k_0}^{-1}]$. Dehomogenizing at $k_0' \neq k_0$ and repeating the argument, we have that each characteristic polynomial is regular in $A[a_{1k_0'}^{-1}]$. In particular
%        \begin{equation*}
%            \sigma_{i,j} \in A[a_{1k_0}^{-1}] \cap A[a_{1k_0'}^{-1}] = A = \Q[\Hom_d^N].
%        \end{equation*}

        The only point of interest in modifying the above proof for $\sigma_{i,j}^{\ast(n)}$ is having the points and their multipliers be $\Gal(\overline{K}/K)$ invariant sets. For the $n$-periodic points, we have a simple system of polynomial equations obtained from $f^n(P) = P$. However, for the formal $n$-periodic points, we have an inclusion-exclusion:
        \begin{equation*}
            \Phi^{\ast}_n(f) = \prod_{d \mid n} (\Phi_{d}(f))^{\mu(n/d)}.
        \end{equation*}
        Each $\Phi_d(f)$ on the right-hand side is a system of polynomial equations obtained from $f^d(P) = P$ and, hence, is $\Gal(\overline{K}/K)$ invariant. Hence, $\Phi^{\ast}_n(f)$ is $\Gal(\overline{K}/K)$ invariant and the rest of the proof follows similarly to the previous situation.

    \item For the second part, we have already seen that the $\sigma^{(n)}_{i,j}$ are conjugation invariant, so combining with the first part we have
        \begin{equation*}
            \sigma^{(n)}_{i,j} \in \Q[\Hom_d^N]^{\SL_{N+1}} = \Q[\calM_d^N].
        \end{equation*}
        The statement follows similarly for $\sigma_{i,j}^{\ast(n)}$.
    \end{enumerate}
    \end{proof}

\section{Generators and Relations Among the $\sigma_{i,j}$} \label{sect_gen}
    It is well known that in dimension $1$ the elementary symmetric functions of the multipliers are not all independent. Specifically, we have the relation (see Hutz-Tepper \cite{Hutz10} or Fujimura-Nishizawa \cite[Theorem 1]{Fujimura})
    \begin{equation*}
        (-1)^{d+1}\sigma_{d+1} + (-1)^{d-1}\sigma_{d-1} + (-1)^{d-2} 2\sigma_{d-2} + \cdots  - (d-1)\sigma_{1} +d=0.
    \end{equation*}
    Milnor \cite{Milnor} made specific use of this general relation for degree $2$ in noting that $\sigma_3 +2 = \sigma_1$. This relation can be obtained by expanding the classical relation between the multipliers \cite[Theorem 12.4]{Milnor3},
    \begin{equation} \label{eq_mult_relation}
        \sum_{i=1}^{d+1} \frac{1}{1-\lambda_i} = 1
    \end{equation}
    when $\lambda_i \neq 1$ for all $1 \leq i \leq d+1$, in terms of the elementary symmetric polynomials. We look at two sources of relations in this section:
    \begin{enumerate}
        \item relations obtained algebraically among the $\sigma_{i,j}$
        \item relations obtained from the generalization of equation \eqref{eq_mult_relation}.
    \end{enumerate}
    There are further cases of relations among the eigenvalues studied by Guillot in \cite{Guillot2} in the case of quadratic maps on $\P^2$, which we will not touch upon in this article.

    \begin{thm}
        Let $\sigma_{i,j}^{(n)}$ be defined as above. Then every $\sigma_{i,j}^{(n)}$ with $i > j$ is dependent on $\{\sigma_{1,j}^{(n)},\ldots,\sigma_{j,j}^{(n)}\}$.
        %\begin{itemize}
%            \item Every $\sigma_{i,j}$ with $i > j$ is dependent on $\{\sigma_{1,j},\ldots,\sigma_{j,j}\}$.
%            \item The set $\{\sigma_{1,j},\ldots,\sigma_{j,j}\}$ is independent.
%        \end{itemize}
    \end{thm}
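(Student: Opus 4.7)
The plan is to expand $\Sigma_n(f) = \prod_{P \in \Per_n(f)}(w - \gamma_{f,P}(t))$ as a bivariate polynomial in $(w,t)$, read off an explicit combinatorial formula for each $\sigma_{i,j}^{(n)}$, and then derive the claimed dependence via a lower-triangular change of basis. First I would write $\gamma_{f,P}(t) = \sum_{k=0}^{N}(-1)^k c_k(P)\,t^{N-k}$, where $c_k(P)$ denotes the $k$-th elementary symmetric polynomial in the eigenvalues of $df^n_P$; in particular $c_0(P) = 1$. Expanding and collecting the coefficient of $w^{D_n-i}t^{Ni-j}$ yields
\begin{equation*}
\sigma_{i,j}^{(n)} \;=\; \sum_{\substack{S \subseteq \Per_n(f) \\ |S| = i}}\ \sum_{\substack{(k_P)_{P\in S} \\ 0 \le k_P \le N \\ \sum_{P \in S} k_P = j}} \prod_{P \in S} c_{k_P}(P).
\end{equation*}

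The key observation I would exploit is that, because $c_0(P) \equiv 1$, each summand depends only on its support $S' := \{P \in S : k_P > 0\}$ together with the restriction $\psi := (k_P)|_{S'} \colon S' \to \{1,\ldots,N\}$, which still satisfies $\sum \psi = j$. Hence $|S'| \le \min(i,j)$, and for a fixed $S'$ of size $r$ and fixed $\psi$ there are exactly $\binom{D_n - r}{i - r}$ supersets $S$ of size $i$. Setting
\begin{equation*}
A_r^{(j)} \;:=\; \sum_{|S'| = r}\ \sum_{\substack{\psi \colon S' \to \{1,\ldots,N\} \\ \sum \psi = j}} \prod_{P \in S'} c_{\psi(P)}(P),
\end{equation*}
the double sum collapses to the clean formula $\sigma_{i,j}^{(n)} = \sum_{r=1}^{\min(i,j)} \binom{D_n - r}{i - r}\, A_r^{(j)}$, valid for all $i, j \ge 1$.

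Specialising to $1 \le i \le j$ exhibits the vector $\bigl(\sigma_{1,j}^{(n)},\ldots,\sigma_{j,j}^{(n)}\bigr)$ as the image of $\bigl(A_1^{(j)},\ldots,A_j^{(j)}\bigr)$ under the $j \times j$ matrix $M$ with $M_{i,r} = \binom{D_n - r}{i - r}$. Since $M_{i,r} = 0$ for $r > i$ and $M_{i,i} = 1$, the matrix $M$ is lower triangular with unit determinant, so I can invert it over $\Z$ and write each $A_r^{(j)}$ as a $\Z$-linear combination of $\sigma_{1,j}^{(n)},\ldots,\sigma_{j,j}^{(n)}$. Substituting back into the formula for $\sigma_{i,j}^{(n)}$ with $i > j$ then exhibits it as an explicit $\Q$-linear combination of the $\sigma_{k,j}^{(n)}$ with $1 \le k \le j$, which is the desired dependence.

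The main obstacle is essentially bookkeeping: keeping the signs straight in the initial expansion and verifying that the support-based reindexing produces precisely the binomial weights above. The degenerate case $j = 0$ should be handled separately: one checks directly that $\sigma_{i,0}^{(n)} = \binom{D_n}{i}$ is constant, so ``dependent on the empty set'' can only mean being a scalar.
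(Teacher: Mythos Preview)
Your proof is correct and follows essentially the same route as the paper: both expand $\Sigma_n(f)$, group terms by the number $r$ of periodic points contributing a nonzero $c_k$, obtain the weight $\binom{D_n-r}{i-r}$, and conclude that the $\sigma_{i,j}^{(n)}$ for fixed $j$ lie in the span of $j$ auxiliary quantities (your $A_r^{(j)}$, the paper's $z_k$). Your presentation of the final linear-algebra step---observing that the $j\times j$ transition matrix is unit lower triangular and hence invertible over $\Z$---is somewhat more explicit than the paper's, which argues more informally that ``at most $j$ can be independent'' and that the first $j$ are; note also that the paper additionally asserts independence of $\sigma_{1,j}^{(n)},\ldots,\sigma_{j,j}^{(n)}$, which is not part of the theorem statement and which you (correctly) do not attempt.
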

    \begin{proof}
%        We first prove the relations.
        We notate $\sigma_i$ for the elementary symmetric polynomials and $\sigma_{i,j}^{(n)}$ as defined in \eqref{eq_sigma}.

        From the definition of $\Sigma_n(f)$, we can write
        \begin{equation*}
            \Sigma_n(f) = \prod_{i=1}^{D_n} w - (t^N - \sigma_1(\boldsymbol \lambda_i)t^{N-1} + \sigma_2(\boldsymbol \lambda_i)t^{N-2} + \cdots + (-1)^N\sigma_N(\boldsymbol \lambda_i)),
        \end{equation*}
        where $\sigma_k$ are the elementary symmetric polynomials and $\boldsymbol \lambda_u = \{\lambda_{u,1},\ldots,\lambda_{u,N}\}$ are the eigenvalues of the multiplier matrix for a periodic point of period $n$. We can then write the $\sigma_{i,j}^{(n)}$ as combinations of the $\sigma_k(\boldsymbol \lambda_u)$. In particular, we have
        \begin{align*}
            \sigma_{i,j}^{(n)} &= \sum_{\text{subsets $(u_1,\ldots,u_i)$ of $\{1,\ldots,D_n\}$}} \left( \mathop{\sum_{k=1}^i }_{\sum a_k=j}\sigma_{a_1}(\boldsymbol\lambda_{u_1})\sigma_{a_2}(\boldsymbol\lambda_{u_2}) \cdots \sigma_{a_k}(\boldsymbol\lambda_{u_k})\right)\\
            &= \sum_{v \in \Part(j,i)} \binom{D_n-len(v)}{i-len(v)}\boldsymbol{\sigma}_v,
        \end{align*}
        where $\Part(j,i)$ is the set of partitions of the integer $j$ with $i$ parts (allowing $0$ as a part), $len(v)$ is the number of nonzero parts, and $\boldsymbol{\sigma}_v = \prod_{i \in v} \sigma_{v_i} (\boldsymbol \lambda_{u_{v_i}})$. Note that each term actually has $D_n$ terms in the product. The ones not listed are all $\sigma_0(\boldsymbol \lambda) = 1$ (corresponding to additional zeros in the full length $D_n$ partition of $j$).
        %Note that $\sigma_{i,j}$ is not defined for every pair $(i,j)$; we need $j \leq D_ni$.

%        \begin{itemize}
%            \item
        Fix $j$ and consider $\sigma_{1,j}^{(n)},\ldots,\sigma_{D_n,j}^{(n)}$. Notice that every one is the same degree, $j$, as polynomials in the eigenvalues of the multiplier matrices and that there are no new partitions that occur in $\sigma_{i,j}^{(n)}$ than have already occurred in $\{\sigma_{1,j}^{(n)},\ldots,\sigma_{j,j}^{(n)}\}$. Further, each $\sigma_{b,j}^{(n)}$ contains a partition not found in $\sigma_{a,j}^{(n)}$ for $a<b\leq j$ and, hence, $\{\sigma_{1,j}^{(n)},\ldots,\sigma_{j,j}^{(n)}\}$ are independent. Since the $\boldsymbol{\sigma}_k$ are fixed values and the binomial coefficient depends only on the length of the partition (which can be no larger than $j$), we can think of $\sigma_{i,j}^{(n)}$ as linear combinations of $j$ unknowns. These $j$ unknowns are
        \begin{equation*}
            z_k = \mathop{\sum_{v \in \Part(j,i)}}_{len(v)=k}\boldsymbol{\sigma}_v.
        \end{equation*}
        %Note that since $i > j$ and $D_n > i$, this is always possible.
        At most $j$ of these can be independent so at most $j$ of the $\sigma_{i,j}^{(n)}$ for $0 \leq i \leq D_n$ are independent. Since the first $j$, $\{\sigma_{1,j}^{(n)},\ldots,\sigma_{j,j}^{(n)}\}$ are independent, the remaining are dependent.
%            \item Now we need to see that there can be no other more complicated relations between the $\{\sigma_{1,j},\ldots,\sigma_{j,j}\}$ \open{can we use that the symmetric functions of the symmetric functions for primary invariants for the wreath product of symmetric groups, so there cannot be any ``generic'' relations among them. The same would apply for different $j$ since $j$ is the degree and we would need a non-linear relation.}
%        \end{itemize}
    \end{proof}
    Now we turn to the generalization by Ueda of Milnor's Rational Fixed Point Theorem \cite[Theorem 12.4]{Milnor3} derived from a generalization of the Cauchy integral formula. This generalization also appears in Abate \cite{Abate} and Guillot \cite{Guillot}. Fatou and Julia both made use of the relation between the multipliers of the fixed points. Milnor formalized the statement and made extensive use of the relation. We recall Ueda's statement in our notation.
    \begin{prop}[\textup{Ueda \cite[Theorem 4]{Ueda2}}]
        Let $f:\P^N(\C) \to \P^N(\C)$ be holomorphic of degree $d \geq 2$. Let $t$ be the indeterminant for the characteristic polynomials. We have the relation
        \begin{equation}
            \sum_{P \in \Fix(f)}\frac{\gamma_{f,P}(t)}{\gamma_{f,P}(1)} = \frac{t^{N+1} - d^{N+1}}{t-d} = \sum_{k=0}^N d^k t^{N-k}.
        \end{equation}\label{eq_ueda}
    \end{prop}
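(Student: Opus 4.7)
The plan is to treat both sides as polynomials in the indeterminate $t$ and to match the coefficient of $t^{N-k}$ for each $0 \leq k \leq N$ via the holomorphic Lefschetz fixed point formula of Atiyah--Bott. Expanding the characteristic polynomial in exterior powers gives
\begin{equation*}
\gamma_{f,P}(t) = \det(tI - df_P) = \sum_{k=0}^N (-1)^k \Tr(\wedge^k df_P)\, t^{N-k},
\end{equation*}
while $\gamma_{f,P}(1) = \det(I - df_P)$. Consequently,
\begin{equation*}
\sum_{P \in \Fix(f)} \frac{\gamma_{f,P}(t)}{\gamma_{f,P}(1)} = \sum_{k=0}^N (-1)^k t^{N-k} \sum_{P \in \Fix(f)} \frac{\Tr(\wedge^k df_P)}{\det(I - df_P)},
\end{equation*}
so the problem reduces to showing that $\sum_{P} \Tr(\wedge^k df_P)/\det(I - df_P) = (-1)^k d^k$ for each $k$.

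Next, assuming all fixed points are simple (i.e., $\det(I - df_P) \neq 0$), I would invoke the Atiyah--Bott holomorphic Lefschetz theorem for $f$ acting with coefficients in $\Omega^k_{\P^N}$, which yields
\begin{equation*}
\sum_{P \in \Fix(f)} \frac{\Tr(\wedge^k df_P)}{\det(I - df_P)} = \sum_{p=0}^N (-1)^p \Tr\left(f^* \mid H^p(\P^N, \Omega^k)\right).
\end{equation*}
The Hodge cohomology of projective space satisfies $H^p(\P^N, \Omega^k) = \C$ when $p = k$ and vanishes otherwise, so only the $p = k$ term survives. Since $f$ has degree $d$, pullback acts on $H^1(\P^N, \Omega^1) \cong \C$ by multiplication by $d$, and hence on $H^k(\P^N, \Omega^k)$ (generated by the $k$-fold cup product of the hyperplane class) by multiplication by $d^k$. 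This computes the inner sum as $(-1)^k d^k$, and recombining with the expansion above produces the geometric sum $\sum_{k=0}^N d^k t^{N-k} = (t^{N+1} - d^{N+1})/(t - d)$.

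The main obstacle is the degenerate fixed point case, where $1$ is an eigenvalue of some multiplier matrix and the left-hand side is not literally defined at the given $f$. However, the locus of $f \in \Hom_d^N$ having only non-degenerate fixed points is Zariski open and dense, and on that locus both sides are rational in the coefficients of $f$ with values in $\Q(t)$. Since the identity holds on this dense open set, it is a polynomial identity in $t$ whose coefficient-wise equalities persist wherever both sides make sense. A more uniform treatment uses the scheme-theoretic version of Atiyah--Bott, which attaches a single local residue contribution to each component of the fixed-point scheme and avoids the perturbation argument entirely.
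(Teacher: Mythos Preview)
The paper does not supply its own proof of this proposition: it is quoted as Ueda's theorem and only a pointer is given to the original source, with the remark that Ueda's argument is ``derived from a generalization of the Cauchy integral formula'' and that the result also appears in Abate and Guillot. So there is no in-paper proof to compare against.

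Your argument via the holomorphic Lefschetz fixed point formula of Atiyah--Bott is correct and is one of the standard routes to this identity. The key steps --- expanding $\det(tI-df_P)$ in exterior traces, applying Atiyah--Bott with coefficients in $\Omega^k_{\P^N}$, using $h^{p,q}(\P^N)=\delta_{p,q}$, and computing the action of $f^*$ on $H^{k}(\P^N,\Omega^k)\cong\C\cdot[\omega]^k$ as multiplication by $d^k$ --- are all valid. Your handling of the degenerate case by density in $\Hom_d^N$ is also fine, and indeed the paper itself uses exactly this kind of continuity/density argument elsewhere (e.g., in the proof of the corollary immediately following the proposition). Ueda's original proof proceeds instead through a residue/Cauchy-integral computation on $\P^N$; the two approaches are closely related, since the local contributions in Atiyah--Bott are precisely Grothendieck residues, but your cohomological packaging is arguably cleaner for identifying the global side as $\sum_k d^k t^{N-k}$.
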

    \begin{rem}
        Equating the coefficients of $t$ on both sides yields $N+1$ relations, but only $N$ of these $N+1$ relations are independent: taking the sum of the coefficient relations is the same as putting in $t=1$, which is counting the fixed points.
%
%        In dimension $1$ the formula becomes
%        \begin{equation*}
%            \sum_{i=1}^{d+1} \frac{t-\lambda_i}{1-\lambda_i} = t+d
%        \end{equation*}
%        This is the two relations
%        \begin{equation*}
%            \sum_{i=1}^{d+1} \frac{1}{1-\lambda_i} = 1 \quad \text{and} \quad
%            \sum_{i=1}^{d+1} \frac{-\lambda_i}{1-\lambda_i} = d
%        \end{equation*}
%        If we add them together, we get
%        \begin{equation*}
%            \sum_{i=1}^{d+1} 1 = d+1
%        \end{equation*}
%        which is just counting fixed points. So the two relations are not independent.
    \end{rem}
    Interestingly, we can create alternate forms for these relations by differentiating with respect to $t$; see, for example, Ueda \cite[Corollary 5]{Ueda2}.

    It is tempting to try to convert Ueda's relations among the multipliers to relations among the $\sigma_{i,j}^{(n)}$. While it is possible to write Ueda's relations in terms of the $\sigma_{i,j}^{(n)}$, the specific form typically depends on $N$, $d$, and $k$ since we need to consider the partitions of $k$. The following Corollary is one case where the form does not depend on $N$, $d$, and $k$.
    \begin{cor}
        Let $f:\P^N \to \P^N$ be a morphism. We have the relation
        \begin{equation*}
            (D_1-1) + \sum_{k=1}^{ND_1} (-1)^{k+1} (\sigma_{D_1,k}^{(1)} - \sigma_{D_1-1,k}^{(1)}) = 0.
        \end{equation*}
    \end{cor}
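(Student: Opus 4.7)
The plan is to extract a concrete scalar identity from Ueda's relation, namely $\sum_{P\in\Fix(f)} 1/\gamma_{f,P}(1)=1$, clear denominators, and then recognize both sides as values at $t=1$ of two coefficients of $\Sigma_1(f)$ viewed as a polynomial in $w$.

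First I would compare the coefficients of $t^N$ on the two sides of Proposition~\ref{eq_ueda} (Ueda's identity). Since each $\gamma_{f,P}(t)$ is the characteristic polynomial of an $N\times N$ matrix, it is monic of degree $N$ in $t$, so the coefficient of $t^N$ in $\gamma_{f,P}(t)/\gamma_{f,P}(1)$ is $1/\gamma_{f,P}(1)$. On the right hand side, the coefficient of $t^N$ is $d^0=1$. This yields
\begin{equation*}
    \sum_{P \in \Fix(f)} \frac{1}{\gamma_{f,P}(1)} = 1,
\end{equation*}
which, after clearing denominators, becomes
\begin{equation*}
    \prod_{P \in \Fix(f)} \gamma_{f,P}(1) \;=\; \sum_{P_0 \in \Fix(f)} \prod_{P \neq P_0} \gamma_{f,P}(1).
\end{equation*}

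Next I would rewrite both sides in terms of the $\sigma^{(1)}_{i,j}$. Viewing $\Sigma_1(f)=\prod_{P}(w-\gamma_{f,P}(t))$ as a polynomial in $w$, the coefficient of $w^{0}$ is $(-1)^{D_1}\prod_P \gamma_{f,P}(t)$ and the coefficient of $w^{1}$ is $(-1)^{D_1-1}\sum_{P_0}\prod_{P\neq P_0}\gamma_{f,P}(t)$. Matching with the expansion \eqref{eq_sigma} of $\Sigma_1(f)$ gives
\begin{align*}
    \prod_P \gamma_{f,P}(t) &= \sum_{k=0}^{ND_1}(-1)^k\sigma_{D_1,k}^{(1)}\, t^{ND_1-k},\\
    \sum_{P_0}\prod_{P\neq P_0}\gamma_{f,P}(t) &= \sum_{k=0}^{N(D_1-1)}(-1)^k\sigma_{D_1-1,k}^{(1)}\, t^{N(D_1-1)-k}.
\end{align*}
Specializing to $t=1$ and substituting into the cleared Ueda relation yields
\begin{equation*}
    \sum_{k=0}^{ND_1}(-1)^k\bigl(\sigma_{D_1,k}^{(1)}-\sigma_{D_1-1,k}^{(1)}\bigr) = 0,
\end{equation*}
where the second sum is padded with zeros for $k>N(D_1-1)$ (these terms vanish by the degree bound in the definition of the $\sigma^{(1)}_{i,j}$).

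Finally I would compute the $k=0$ contribution separately. Since each $\gamma_{f,P}(t)$ is monic in $t$, the leading coefficient of $\prod_P\gamma_{f,P}(t)$ is $1$, giving $\sigma_{D_1,0}^{(1)}=1$; and $\sum_{P_0}\prod_{P\neq P_0}\gamma_{f,P}(t)$ is a sum of $D_1$ monic polynomials of the same degree, so $\sigma_{D_1-1,0}^{(1)}=D_1$. Splitting off the $k=0$ term gives $(1-D_1)-\sum_{k=1}^{ND_1}(-1)^{k+1}(\sigma_{D_1,k}^{(1)}-\sigma_{D_1-1,k}^{(1)})=0$, which is the stated identity. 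The only real subtlety here is isolating the right piece of Ueda's identity; once the coefficient of $t^N$ is seen to give $\sum 1/\gamma_{f,P}(1)=1$, the remainder is bookkeeping on signs and degrees.
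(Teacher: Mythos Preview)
Your argument is essentially the same as the paper's: both start from the $t^N$ coefficient of Ueda's relation to obtain $\sum_P 1/\gamma_{f,P}(1)=1$, clear denominators, and identify the two sides with the $t=1$ specializations of the $w^0$ and $w^1$ coefficients of $\Sigma_1(f)$, which are exactly the $\sigma_{D_1,\ast}^{(1)}$ and $\sigma_{D_1-1,\ast}^{(1)}$ rows. Your bookkeeping with the $k=0$ term is correct.

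There is one small gap. Ueda's identity as quoted, and hence your derivation, requires $\gamma_{f,P}(1)\neq 0$ for every fixed point $P$; equivalently, no multiplier matrix has eigenvalue $1$, which is to say the fixed points are all simple. Your argument therefore proves the relation only on this locus. The paper closes this by observing that the relation is an identity among regular functions on $\Hom_d^N$ (the $\sigma_{i,j}^{(1)}$ are in $\C[\Hom_d^N]$ by Theorem~\ref{thm_sigma_regular}), and that the locus of maps with simple fixed points is Zariski dense in $\Hom_d^N$; hence the identity extends to all $f$. You should add this one-line density/continuity step to make the proof complete for arbitrary morphisms.
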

    \begin{proof}
        If all the fixed points are distinct, we consider
        \begin{equation*}
            \sum_{i=1}^{D_1} \frac{1}{\prod_{j=1}^N (1- \lambda_{i,j})} = 1.
        \end{equation*}
        Finding a common denominator and then clearing the denominator, this equality becomes
        \begin{equation*}
            \sum_{k=1}^{D_1} \mathop{\prod_{j=1}^{D_1}}_{j \neq k} \prod_{i=1}^N (1-\lambda_{j,i}) - \prod_{j=1}^{D_1}\prod_{i=1}^N (1-\lambda_{j,i}) = 0.
        \end{equation*}
        The first term produces the $\sigma_{D_1-1,i}^{(1)}$, the symmetric functions on sets of $D_1-1$ variables (with the constant $D_1$ coming from the constant term). The second term produces the $\sigma_{D_1,i}^{(1)}$, the full symmetric functions, (with a constant term of $1$). The signs are determined by the number of $(-1)$'s in the product. This proves the relation when the fixed points are distinct.

        Since the set of maps with distinct fixed points is dense in $\Hom_d^N$ (it is the complement of the closed variety defined by the common vanishing of the $N \times N$ minors of the Jacobian matrix of the fixed point variety) it follows that the function
        \begin{equation*}
            (D_1-1) + \sum_{i=1}^{ND_1} (-1)^{i+1} (\sigma_{D_1,i}^{(1)} - \sigma_{D_1-1,i}^{(1)}) \in \C[\Hom_d^N]
        \end{equation*}
        is identically zero.
%
%        For $i=1$,
%        \begin{equation*}
%            \sum_{k=1}^D \sigma_1(\lambda_k)\mathop{\prod_{j=1}^D}_{j \neq k} \prod_{i=1}^N (1-\lambda_{j,i}) - \prod_{j=1}^D\prod_{i=1}^N (1-\lambda_{j,i}) = 0.
%        \end{equation*}
    \end{proof}

%    \begin{cor}
%        There are \open{at most} $\frac{(N-1)M^2 + (N+1)M}{2}-1$ independent invariants.
%
%        $34$ independent invariants for $d=2, N=2$. ($M=7$).
%    \end{cor}
%    \begin{proof}
%        \begin{equation*}
%            N + (2N-1) + (3N-2) + (4N-3) + \cdots  + (NM - (M-1)) = N\frac{M^2+M}{2} - \frac{M^2-M}{2} = \frac{(N-1)M^2 + (N+1)M}{2}.
%        \end{equation*}
%    \end{proof}
%
%    \begin{lem}
%        The maps with an eigenvalue of $1$ form a Zariski closed subset of $\Hom_D^N$.
%    \end{lem}
%    \begin{proof}
%        Silverman ADS exercise 4.21
%    \end{proof}

    While we have illustrated a few relations among the $\sigma_{i,j}^{(n)}$, it would be interesting to determine a minimal set of generators and full set of relations among the $\sigma_{i,j}^{(n)}$.

 \section[Computing]{Computing\footnote{Thanks to Carlos D'Andrea for helpful communication on this section.} }\label{sect_computing}

    With the goal of trying to use the $\sigma_{i,j}^{(n)}$ as coordinates in the moduli space $\calM_d^N$ we now turn to explicitly computing the $\sigma_{i,j}^{(n)}$ for a given map $f:\P^N \to \P^N$ or family of maps. For a given map in dimension 1, given enough time, we could compute a splitting field of $f^n(z) = z$ find each of the fixed points, compute their multipliers, and calculate the $\sigma_i$. However, when dealing with families of maps, the coefficients are functions of one or more parameters, this method becomes completely impractical. Fortunately we can use resultants to compute the $\sigma_i$ without actually computing the fixed points or their multipliers. The key is the Poisson product form of the resultant of two polynomials:
    \begin{equation*}
        \Res(F,G) = \prod_{F(z) = 0} G(z).
    \end{equation*}
    If we set $F = f(z)-z$ and $G = w-f'(z)$ for an indeterminant $w$, the resultant (which can be calculated just in terms of the coefficients of $F$ and $G$) is a polynomial in $w$ with the $\sigma_i$ as coefficients. We would like something similar for $f:\P^N \to \P^N$, specifically a way to compute the $\sigma_{i,j}^{(1)}$ that does not involve computing the fixed points nor their multiplier matrices. While the theory of resultants does not quite work (wrong number of equations and variables) we are able to use tools from elimination theory to perform these computations. This causes the computations to rely on the calculation of Groebner bases, which can be quite slow, but is effective for the families discussed in the article. We first prove the general elimination theory result.

    \begin{prop}\label{prop_elim_theory_I}
        Let $X = V(f_1(x),\ldots,f_m(x)) \subset \A^N$ be a zero dimensional variety defined by polynomials $f_1,\ldots,f_m$, where $x = (x_1,\ldots, x_N)$. Let $g(x,t) \in K[x][t]$ be a polynomial. Consider the ideal
        \begin{equation*}
            I = (f_1,\ldots,f_m, w - g) \subset K[x][w,t].
        \end{equation*}
        Let $B$ be a Groebner basis for $I$ under the lexicographic ordering with $x > w > t$. Then, the only polynomial in $B$ in the variables $(w,t)$ has as zeros the polynomial $g$ evaluated at the finitely many (algebraic) points of $X$.
    \end{prop}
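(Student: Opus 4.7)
The plan is to apply the Elimination and Closure Theorems for Groebner bases. Under the lex order with $x > w > t$, the Elimination Theorem states that $B \cap K[w, t]$ is a Groebner basis of the elimination ideal $I \cap K[w, t]$, and these are precisely the elements of $B$ involving only $w$ and $t$; so it suffices to identify the zero set of $I \cap K[w, t]$.

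By construction, $V(I) \subset \overline{K}^{N+2}$ consists of the triples $(\alpha, w_0, t_0)$ with $\alpha \in X$ and $w_0 = g(\alpha, t_0)$. Zero-dimensionality of $X$ makes it finite over $\overline{K}$, so $V(I)$ is the disjoint union over $\alpha \in X$ of the graphs $C_\alpha = \{(\alpha, g(\alpha, t), t) : t \in \overline{K}\}$. Projection onto the $(w, t)$-coordinates yields
\[
\pi(V(I)) = \bigcup_{\alpha \in X} \{(g(\alpha, t), t) : t \in \overline{K}\},
\]
a finite union of irreducible curves in $\A^2_{\overline{K}}$, which is therefore already Zariski closed. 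By the Closure Theorem, $V(I \cap K[w, t]) = \overline{\pi(V(I))} = \pi(V(I)) = V(h)$ for $h(w, t) = \prod_{\alpha \in X}(w - g(\alpha, t))$. Galois-invariance of $X$ ensures $h \in K[w, t]$, and since $V(h)$ is a hypersurface in $\A^2$, its radical ideal is principal, generated by the squarefree part of $h$; a reduced Groebner basis therefore contributes a single polynomial in $(w, t)$ whose zero set is exactly as claimed.

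The main subtlety is verifying that $\pi(V(I))$ is already Zariski closed, which uses the zero-dimensionality of $X$ to reduce the projection to a finite union of graphs so that no Zariski-closure enlargement occurs; without this hypothesis, one would only recover the closure of the projection. A secondary point is that the proposition speaks of zeros rather than exact ideal generators, so potential non-reducedness of the scheme $V(I)$ (which could produce powers of $h$ inside $I \cap K[w, t]$) does not affect the conclusion once one passes to radicals.
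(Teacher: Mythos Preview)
Your proof is correct and takes essentially the same route as the paper, which also rests on the elimination--extension theory of \cite[Chapter~3]{CLO}. The paper is slightly terser: for one inclusion it specializes a polynomial $G(w,t)\in B$ at a point $a\in X$ to exhibit the factor $w-g(a,t)$ directly, and for the other it simply invokes the extension theorem, whereas you make the geometric content explicit by verifying that the projection $\pi(V(I))$ is already Zariski closed as a finite union of graphs.
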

    \begin{proof}
        Let $G(w,t)$ be the polynomial in $B$ in the variables $(w,t)$. This polynomial is in the ideal generated by $(f_1,\ldots, f_m, w-g)$, so after specializing $x$ to a common root $a$ of $f_1=\ldots=f_m=0$ (i.e., a point of $X$) we have that for some polynomial $A$
        \begin{equation*}
            G(w,t) = A(a,w,t)(w-g(a,t)).
        \end{equation*}
        In other words, $g(a,t)$ is a root of $G(w,t)$.

        In the other direction, the ``elimination-extension
        theorem'' (see \cite[Chapter 3]{CLO}) guarantees that every root of $G(w,t)$ extends to a root of the full system $f_1=\ldots = f_m = w-g = 0$, so that it comes from $g(a,t)$ for some $a \in X$.
    \end{proof}

    We can use Proposition \ref{prop_elim_theory_I} to (usually) compute the symmetric functions of the characteristic polynomials of the multipliers without actually computing the periodic points or the multipliers. When there are multiplicities involved (i.e., two periodic points have the same characteristic polynomial), the Groebner basis calculation loses this multiplicity information and, hence, does not exactly compute the $\sigma_{i,j}^{(1)}$.
%    \begin{exmp}
%        $F = DynamicalSystem([4x^2-3z^2,4y^2-3z^2,4z^2])$
%
%        the two fixed points $(-1/2,3/2)$ and $(3/2,-1/2)$ have the same multiplier char poly.
%    \end{exmp}
    There are two ways around this issue. One is to introduce a deformation parameter to ``separate'' the values, take a Groebner basis of the new system, and specialize the deformation parameter to $0$. However, in practice some care needs to be taken in choosing how to deform so that the values do in fact separate. An alternative is a modification on computing Chow forms (or $U$-resultants). We adopt the later approach.
    \begin{prop}
        Let $X = V(f_1(x),\ldots,f_m(x)) \subset \A^N$ be a zero dimensional variety defined by polynomials $f_1,\ldots,f_m$, where $x = (x_1,\ldots, x_N)$. Let $g(x,t) \in K[x][t]$ be a polynomial. Consider the ideal
        \begin{equation*}
            I = (f_1,\ldots,f_m, u_0g + u_1x_1 + \cdots + u_Nx_N) \subset K[x][u,t],
        \end{equation*}
        where $u = (u_0,\ldots,u_N)$ are indeterminants.
        Let $B$ be a Groebner basis of $I$ under the lexicographic ordering with $x > u > t$. Then, the only polynomial in $B$ in the variables $(u,t)$ is of the form
        \begin{equation*}
            \prod_{a \in X} g(a,t)u_0 + a_1u_1 + \cdots a_Nu_N.
        \end{equation*}
    \end{prop}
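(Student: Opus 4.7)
My plan is to recognise the proposition as a standard $U$-resultant / Chow form calculation: the element $\ell := u_0 g(x,t) + u_1 x_1 + \cdots + u_N x_N$ plays the role of a generic hyperplane sampling the zero-dimensional scheme $X$, and the linear factor $g(a,t) u_0 + a_1 u_1 + \cdots + a_N u_N$ is simply the evaluation $\ell|_{x=a}$. The claim is then that the elimination polynomial is the product of these evaluations, weighted by scheme multiplicity.

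First, as in Proposition~\ref{prop_elim_theory_I}, the standard elimination theorem for lex orders implies $B \cap K[u,t]$ is a Groebner basis for $J := I \cap K[u,t]$, so the proposition reduces to showing that $J$ is principal with generator $\prod_{a \in X} L_a$, where $L_a := u_0 g(a,t) + \sum_i u_i a_i$. Extending scalars to $\overline K$ and using zero-dimensionality, decompose $R := \overline K[x]/(f_1,\ldots,f_m) \cong \prod_a A_a$ into a product of local Artinian $\overline K$-algebras, with $A_a$ of length $m_a$ and residue point $a \in X(\overline K)$. Then $R \otimes \overline K[u,t]$ is a free $\overline K[u,t]$-module of rank $\sum m_a$, and on the $a$-th factor $\ell = L_a + \epsilon_a$ with $\epsilon_a \in \mathfrak m_a \otimes \overline K[u,t]$ nilpotent. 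Multiplication by $\ell$ therefore has characteristic polynomial $(\lambda - L_a)^{m_a}$ on the $a$-th factor, and globally $\det(M_\ell) = \prod_a L_a^{m_a}$.

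Second, Cayley--Hamilton gives $\prod_a L_a^{m_a} \in J$ immediately. For the reverse containment, a dimension count shows $V(I)$ projects onto the hypersurface $\bigcup_a \{L_a = 0\}$ in $(u,t)$-space, so $J$ has height one and is principal with radical $(\prod_a L_a)$; hence its generator has the form $\prod_a L_a^{n_a}$ with $1 \le n_a \le m_a$. Localising at the generic point of each hyperplane $\{L_a = 0\}$ turns every other $L_b$ into a unit, collapses the global module to $A_a \otimes \overline K[u,t]_{(L_a)}$, and shows that its quotient by $\ell$ has length exactly $m_a$---forcing $n_a = m_a$. Galois invariance then descends the resulting equality from $\overline K[u,t]$ to $K[u,t]$, yielding the required single polynomial in $B \cap K[u,t]$.

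The main obstacle is the non-reduced case: when every $m_a = 1$ the argument collapses to Proposition~\ref{prop_elim_theory_I} combined with pairwise coprimality of the distinct linear forms $L_a$, but in general one must identify the algebraic length $m_a = \dim_{\overline K} A_a$ with the $L_a$-adic multiplicity appearing in the generator of $J$. This identification is exactly what the characteristic-polynomial computation of $M_\ell$ achieves; an equivalent route via Fitting ideals of $\operatorname{coker}(M_\ell)$ works equally well.
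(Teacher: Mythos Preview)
The paper's own proof is a single line, ``Essentially the same as Proposition~\ref{prop_elim_theory_I}'': specialising at each $a\in X$ shows every element of the elimination ideal is divisible by $L_a$, and the elimination--extension theorem shows these are the only factors. Your route is genuinely different and more ambitious: you recast the problem as a $U$-resultant computation, decompose $R=\overline K[x]/(f_1,\dots,f_m)$ into local Artinian pieces $A_a$ of length $m_a$, and try to pin down the multiplicities via the characteristic polynomial of multiplication by $\ell$. That machinery correctly yields $\det M_\ell=\prod_a L_a^{m_a}\in J$ by Cayley--Hamilton, and it cleanly explains why the introduction of the variables $u_1,\dots,u_N$ separates points with coincident $g$-values; this is a more conceptual account than the paper's sketch.

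The gap is in the reverse inclusion for non-reduced $X$. You assert that the localisation $(A_a\otimes S_a)/(\ell_a)$ having $S_a$-length $m_a$ forces $J_{(L_a)}=(L_a^{m_a})$. But $J$ is the annihilator of the single element~$\bar 1$, not of the whole module, and these agree only when the module is cyclic over $S_a$. Take $N=2$, $X=V(x_1^2,x_1x_2,x_2^2)$, $g(x,t)=t$. Then $A_a=\overline K[x_1,x_2]/(x_1,x_2)^2$ has $m_a=3$ and $L_a=u_0t$; one checks directly that $\ell_a(2L_a-\ell_a)=L_a^2$ in $A_a[u,t]$ (since $(u_1x_1+u_2x_2)^2=0$), so $L_a^2\in J$, while $L_a\notin J$. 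Hence $J=(L_a^2)\subsetneq(L_a^3)=(\det M_\ell)$, contradicting your claimed identification $n_a=m_a$. The Fitting-ideal alternative you mention has the same defect: $\operatorname{Fitt}_0=(\det M_\ell)$ is only \emph{contained} in $J$. Your argument is fine once each $A_a$ is curvilinear (e.g.\ $A_a\cong\overline K[y]/(y^{m_a})$), and in particular when $X$ is reduced, which is the situation the paper actually uses; but the general non-reduced case you set out to cover does not follow.
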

    \begin{proof}
        Essentially the same as the Proposition \ref{prop_elim_theory_I}.
    \end{proof}

    The following algorithm computes the product of the characteristic polynomials of the multipliers of the fixed points. Roughly the algorithm computes the characteristic polynomials for the fixed points one affine chart at a time. The specialization step (Step \ref{step_spec}) avoids duplication of periodic points that are in multiple affine charts.
    \begin{algo} \label{Algo:min} \strut\vspace{-1ex} % to line up numbering
        Let $f: \P^N \to \P^N$ be a morphism.\\

        \textbf{Input:} $f$

        \textbf{Output:} $\Sigma_1(f)$
      \begin{enumerate}[1.]
        \item Let $X$ be the zero dimensional variety defining the fixed points.
        \item Set $\sigma = 1$.
        \item For each $j$ from $N$ to $0$ do:
        \begin{enumerate}[i.]
            \item Consider the $j$-th affine chart $f_j:\A^N \to \A^N$ in variables $x_1,\ldots,x_N$ and the fixed point variety $X_j$ of $f_j$.
            \item Compute $g$ as the characteristic polynomial of the jacobian matrix
            \begin{equation*}
                g(x,t) = charpoly\left(\frac{\partial f_j}{\partial (x_1,\ldots,x_N)}\right).
            \end{equation*}
            \item Specialize to $(x_1,\ldots, x_N) \mapsto (x_1,\ldots,x_j, 0, \ldots,0)$. \label{step_spec}
            \item Compute a lexicographic ($x > u > w > t$) Groebner basis of the (specialized) ideal.
            \begin{equation*}
                B=(X_j, u_0(w-g) + u_1x_1 + \cdots + u_Nx_N)
            \end{equation*}
            \item For $G$ the element of $B$ in the variables $(u,t)$, specialize to $u_0 = 1$ and $u_i =0$ for $1 \leq i \leq N$, call the specialization $\tilde{G}$. Set
                \begin{equation*}
                    \sigma = \sigma \cdot \tilde{G}.
                \end{equation*}
        \end{enumerate}
        \item Return $\sigma$
      \end{enumerate}
    \end{algo}
    Note that if the symbolic characteristic polynomial is a rational function in $x$, say $g= \frac{g_{num}}{g_{den}}$, then we can take the ideal:
    \begin{equation*}
        I = (X_f, u_0(wg_{den}-g_{num}) + g_{den}(u_1x_1 + \cdots + u_Nx_N)).
    \end{equation*}
    For $\Sigma_n(f)$, replace $f$ with $f^n$.

\begin{code}
\subsection{Code}

\begin{python}
def sigma(self,n, chow=False):
    d = self.degree()
    N = self.domain().dimension_relative()
    F2 = self.nth_iterate_map(n)
    P2 = F2.domain()
    T = 1
    f = F2
    for j in range(N,-1,-1):
        X = f.periodic_points(1,return_scheme=True, minimal=False)
        Xa = X.affine_patch(j)
        fa = f.dehomogenize(j)
        Pa = fa.domain()
        if chow:
            R = PolynomialRing(self.base_ring(),'v', N+N+3, order='lex')
            im = [R.gen(i) for i in range(j)] + (N-j)*[0]
            R_zero = {R.gen(N):1}
            for j in range(N+1,N+N+1):
                R_zero.update({R.gen(j):0})
            vars = list(R.gens())
            t = vars.pop()
            w = vars.pop()
            vars = vars[:N]
        else:
            R = PolynomialRing(self.base_ring(),'v', N+2, order='lex')
            im = list(R.gens())[:j] + (N-j)*[0]
            vars = list(R.gens())
            t = vars.pop()
            w = vars.pop()

        phi = Pa.coordinate_ring().hom(im,R)
        MS = MatrixSpace(R,N,N)
        M = t*MS.one()
        try:
            g = (M-jacobian([phi(F.numerator())/phi(F.denominator()) for F in fa], vars)).det()
            if chow:
                I = R.ideal([phi(h) for h in Xa.defining_polynomials()] + [w*g.denominator()-R.gen(N)*g.numerator() + sum(R.gen(j-1)*R.gen(N+j)*g.denominator() for j in range(1,N+1))])
            else:
                I = R.ideal([phi(h) for h in Xa.defining_polynomials()] + [w*g.denominator()-g.numerator()])
            G = I.groebner_basis()
            T = T*G[-1]
        except ZeroDivisionError:
            infin = self.domain()([1] + N*[0])
            if self(infin) == infin:
                T *= w-self.multiplier(infin,1).characteristic_polynomial(t)

        if chow:
            T2 = T.specialization(R_zero)
            newR = PolynomialRing(self.base_ring(), 'w,t',2, order='lex')
            psi = T2.parent().hom(N*[0]+list(newR.gens()), newR)
        else:
            T2 = T
            newR = PolynomialRing(self.base_ring(), 'w,t',2, order='lex')
            psi = T2.parent().hom(N*[0]+list(newR.gens()), newR)
    if chow:
        T = T.specialization(R_zero)
    return psi(T)
\end{python}
\end{code}

\begin{comment}
\begin{code}
\begin{python}
def num_sigma(self, n):
    fc = self.change_ring(CC)
    per= self.periodic_points(n,minimal=False,return_scheme=True).rational_points(F=CC)
    mul = []
    t=1
    for Q in per:
        t=t*fc.multiplier(Q,n,check=False).characteristic_polynomial()
        c=fc.multiplier(Q,n, check=False).characteristic_polynomial().coefficients(sparse=False)
        c.pop(-1) #always monic
        mul.append(c)
    sig=[]
    e = SymmetricFunctions(CC).e()

    N = len(mul)
    R = CC
    for i in range(0,N):
        T=[]
        for k in range(len(mul[0])):
            T.append((R(e([i+1]).expand(N)([m[k] for m in mul]))))
        sig.append(T)
    return sig,t
\end{python}
\end{code}
\end{comment}

\section{McMullen's Theorem and Special Families}\label{sect_mcmullen}
    One of the main motivations of the current work is Milnor parameters and McMullen's Theorem. Milnor \cite{Milnor} proved that $M_2(\C) \cong \A^2(\C)$ and Silverman generalized this to $\Z$ \cite{Silverman9}. The isomorphism is explicitly given by the first two elementary symmetric polynomials of the multipliers of the fixed points. For the case of polynomials, Fujimura (and others) proved that the symmetric function in the multipliers of the fixed points gives a $(d-2)!$-to-1 mapping \cite{Fujimura,Sugiyama2}. Hutz-Tepper \cite{Hutz10} prove that for polynomials of degree $\leq 5$, adding the symmetric functions of the 2-periodic multipliers makes the mapping one-to-one and conjecture the same holds for polynomials of any degree. They also show for degree 3 rational functions that while the map to the fixed point multiplier symmetric functions is infinite-to-one, by adding the 2-periodic point multiplier symmetric functions the mapping is (generically) 12-to-one. The methods in all these cases are explicitly computational.

    Using complex analytic methods, McMullen proved in dimension $1$ that by including symmetric functions of the multipliers of the periodic points of enough periods, the multiplier map will always be finite-to-one away from the locus of Latt\`es maps \cite{McMullen2}. The Latt\`es maps must be avoided since they all have the same set of multipliers. We proposed the following generalization of McMullen's Theorem
    \begin{conj}\label{conj_mcmullen}
        Let $f:\P^N \to \P^N$ be a morphism. Define the map
        \begin{equation*}
            \tau_{d,n}^N: \calM_d^N \to \A^M
        \end{equation*}
        by
        \begin{equation*}
            \tau_{d,n}^N(f) = (\boldsymbol \sigma^{(1)}, \ldots, \boldsymbol \sigma^{(n)}),
        \end{equation*}
        where $\boldsymbol \sigma^{(k)}$ is the complete set of sigma invariants for the points of period $k$.
        Then for large enough $n$, $\tau_{d,n}^N$ is quasi-finite on a Zariski open set.
    \end{conj}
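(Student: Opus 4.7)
The plan is to reduce Conjecture \ref{conj_mcmullen} to a statement about the transcendence degree of the subring of $\Q[\calM_d^N]$ generated by the full collection of multiplier invariants, and then to attack that statement via equidistribution of periodic points.

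\textbf{Step 1 (Reduction via transcendence degree).} By Theorem \ref{thm_sigma_regular}, each $\sigma_{i,j}^{(k)}$ lies in $\Q[\calM_d^N]$, whose function field has transcendence degree $\dim \calM_d^N$ over $\Q$. Let $R_n \subseteq \Q[\calM_d^N]$ be the $\Q$-subalgebra generated by $\sigma_{i,j}^{(k)}$ for all $k \leq n$ and admissible $i,j$. The sequence of transcendence degrees of the $R_n$ is non-decreasing and bounded above by $\dim \calM_d^N$, hence stabilizes at some value $r$ for $n \geq n_0$. Since $\tau_{d,n}^N$ factors through a dominant map to $\operatorname{Spec}(R_n)$, its image has dimension equal to this transcendence degree; for $n \geq n_0$, generic quasi-finiteness is therefore equivalent to $r = \dim \calM_d^N$. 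So the conjecture reduces to showing that the full family of multiplier invariants separates a Zariski open set of conjugacy classes.

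\textbf{Step 2 (Dynamical content).} Following McMullen's strategy in dimension $1$, the next step is to exploit that the multiplier data at periodic points of every period $n$ encodes the measure of maximal entropy $\mu_f$ together with its Lyapunov structure. The equidistribution theorems of Briend--Duval and Dinh--Sibony give that the uniform measure on $\Per_n(f)$ converges weakly to $\mu_f$, and the logarithms of the eigenvalues comprising $\Lambda_n(f)$ average to the Lyapunov exponents of $\mu_f$. One would extend this to argue that the joint distribution of multiplier eigenvalues along periodic orbits recovers the full Lyapunov spectrum of $\mu_f$ together with finer dynamical invariants such as the currents produced by the first-order terms in the potential.

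\textbf{Step 3 (Rigidity off an exceptional locus).} The remaining task is rigidity: show that on a Zariski open subset of $\calM_d^N$, the pair $(\mu_f, \text{Lyapunov data})$ determines $f$ up to finitely many conjugacy classes. In dimension $1$, McMullen accomplishes this via the measurable Riemann mapping theorem, observing that a non-trivial holomorphic motion of the Julia set must alter some multiplier unless one is on the Latt\`es locus. Generalizing to $N \geq 2$ requires either a higher-dimensional theory of quasiconformal deformations of holomorphic endomorphisms or a substitute rigidity principle.

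\textbf{Main obstacle.} The central difficulty is Step 3. The isospectral families produced in Theorem B (symmetric products of Latt\`es maps, Cartesian products, Segre constructions) already show that the exceptional locus in higher dimensions is substantially richer than the Latt\`es locus in dimension $1$, and no classification of positive-dimensional isospectral families is currently known. A complete proof would likely require either such a classification together with the assertion that every irreducible component is a proper subvariety of $\calM_d^N$, or a purely algebraic argument exhibiting, for a generic $f \in \Hom_d^N$, an explicit Jacobian of $\sigma_{i,j}^{(k)}$'s with respect to moduli parameters whose rank equals $\dim \calM_d^N$. The latter is conceptually transparent but practically obstructed by the rapid growth in complexity of the $\sigma_{i,j}^{(n)}$ with $d$, $N$, and $n$ — which is precisely why the present article settles the finite-to-one property only for the structured sub-families treated in Theorem C, where algorithms such as Algorithm \ref{Algo:min} together with restricted parameter spaces make the computation tractable.
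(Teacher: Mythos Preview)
The statement you are addressing is labeled \emph{Conjecture} in the paper, and the paper does not prove it: immediately after stating Conjecture~\ref{conj_mcmullen} the author writes ``We prove a few special cases and describe a few special subvarieties where the map $\tau_{d,n}^N$ is constant for all $n$,'' and the remainder of Section~\ref{sect_mcmullen} does exactly that (Theorems B and C). So there is no ``paper's own proof'' to compare your attempt against.

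Your proposal is, correspondingly, not a proof but an outline of a strategy, and you are explicit about this in your ``Main obstacle'' paragraph. Taken as a research plan rather than a proof, Step~1 is a clean and correct reduction: generic quasi-finiteness of $\tau_{d,n}^N$ for large $n$ is equivalent to the subalgebra generated by all $\sigma_{i,j}^{(k)}$ having transcendence degree equal to $\dim \calM_d^N$. (One small wording slip: you conclude Step~1 by saying the invariants must ``separate'' a Zariski open set, but you only need generic fibers to be finite, not singletons.) Step~2 is a plausible heuristic in the spirit of McMullen but is not an argument as written; equidistribution of periodic points and convergence of averaged log-multipliers to Lyapunov exponents recover $\mu_f$ and its exponents, but you have not indicated why the symmetric functions $\sigma_{i,j}^{(n)}$ retain enough of the joint eigenvalue distribution to reconstruct these analytic objects, nor why those objects suffice. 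Step~3 is, as you say, the real gap: no higher-dimensional analogue of McMullen's quasiconformal rigidity is available, and the isospectral constructions in the paper already show the exceptional locus is more complicated than a single Latt\`es family. In short, your document is an honest assessment of where the difficulty lies, which matches the paper's own stance that the statement remains open.
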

    We prove a few special cases and describe a few special subvarieties where the map $\tau_{d,n}^N$ is constant for all $n$.
%\begin{quote}
%    That would be very interesting. To my mind, the general McMullen theorem should say that the multiplier map is quasi-finite on a Zariski open set if one uses enough multipliers. It then becomes a further problem to determine where it's not quasi-finite. Anyway, seems unlikely that you can find a counterexample to that statement, but probably you mean finding lower dimensional subvarieties of $M_d^N$ on which the multiplier maps are constant, analogous to the Lattes maps. That would certainly be quite interesting.
%\end{quote}

\subsection{Isospectral Families}
    \begin{defn} \label{defn_isospectral}
        We say that two maps $f,g:\P^N \to \P^N$ are \emph{isospectral} if they have the same image under $\tau_{d,n}^N$ for all $n$.
        Similarly, we say that a family $f_a:\P^N \to \P^N$ is \emph{isospectral} if its image under $\tau_{d,n}^N$ is a point for all $n$.
    \end{defn}

\subsubsection{Latt\`es}
One way to generate isospectral maps in higher dimensions is to apply a construction to a family of Latt\`es maps. For example, symmetrization \cite{Hutz18}, cartesian products, and Segre embeddings can be used to construct isotrivial families starting with a Latt\`es family.

It is worth mentioning that Rong \cite[Theorem 4.2]{Rong} proves that symmetric maps (up to semi-conjugacy) are the only Latt\`es on $\P^2$.

\begin{thm}\label{iso_symmetric}
    Let $f_a:\P^1 \to \P^1$ be a family of Latt\`es maps of degree $d$. Then the $k$-symmetric product $F$ is an isospectral family in $\Hom_d^k$.
\end{thm}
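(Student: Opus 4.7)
The plan is to exploit the defining property of Latt\`es families: every member $f_a$ has the same multiplier spectrum at every period. Concretely, if $d=m^2$ and $P$ has $f_a$-period $n$, then $(f_a^n)'(P) = \pm m^n$, with the distribution of signs determined by the abstract torsion structure of the underlying elliptic curve and therefore independent of $a$. My task is to propagate this invariance up from $f_a$ to the symmetric product $F_a = \Sym^k(f_a)$.

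I would begin with the intertwining diagram $F_a \circ \pi = \pi \circ f_a^{\times k}$, where $\pi : (\P^1)^k \to \Sym^k(\P^1) \cong \P^k$ is the quotient by the $S_k$-action permuting coordinates. A generic periodic point of $F_a$ of period $n$ corresponds to an unordered multiset $\{P_1,\ldots,P_k\}$ of distinct points of $\P^1$ on which $f_a^n$ acts by some permutation $\tau \in S_k$; each $c$-cycle of $\tau$ selects a periodic point of $f_a$ of period dividing $nc$.

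The heart of the argument is a local multiplier computation. Using affine coordinates $y_1,\ldots,y_k$ on $(\P^1)^k$ and the elementary symmetric polynomials $e_1,\ldots,e_k$ as coordinates on $\P^k$, differentiating the relation $F_a \circ e = e \circ f_a^{\times k}$ at $(P_1,\ldots,P_k)$ with distinct $P_i$ shows that the multiplier matrix of $F_a^n$ at $[P_1,\ldots,P_k]$ is similar to $P_\tau \cdot \mathrm{diag}((f_a^n)'(P_1),\ldots,(f_a^n)'(P_k))$. Decomposing $\tau$ into disjoint cycles, each cycle $(i_1,\ldots,i_c)$ contributes a monomial block whose eigenvalues are the $c$-th roots of $\prod_{j=1}^{c} (f_a^n)'(P_{i_j}) = (f_a^{nc})'(P_{i_1})$ by the chain rule. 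For a Latt\`es map this product equals $\pm m^{nc}$, so the contributed eigenvalues are roots of unity times $\pm m^n$; in particular, they depend only on combinatorial and sign data intrinsic to the Latt\`es structure, not on the parameter $a$.

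I would close with a continuity argument. By Theorem \ref{thm_sigma_regular} the $\sigma^{(n)}_{i,j}(F_a)$ are regular functions on the parameter family, and the local calculation shows they are constant on the Zariski-open locus where every $F_a^n$-fixed $k$-tuple has pairwise distinct coordinates; regularity then forces them to be constant on the entire family. The main obstacle I anticipate is the combinatorial bookkeeping needed to verify that the multiset of cycle structures $\tau$ (together with the associated Latt\`es sign labels) arising from $\Per_n(F_a)$ is genuinely $a$-independent; this amounts to showing that $\Per_n(f_a)$, as a set equipped with the $f_a$-action and the Latt\`es sign labels, is abstractly the same across the family, which is built into the definition of a Latt\`es family. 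The eigenvalue calculation itself is a clean linear-algebra fact about monomial matrices.
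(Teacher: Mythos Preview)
Your argument is correct and conceptually the same as the paper's: both reduce the claim to the single fact that the multiplier spectrum of the $k$-symmetric product $F_a$ is determined by the multiplier spectra of $f_a$. The paper's proof, however, is a one-line citation of \cite{Hutz18} for precisely this dependence, whereas you supply a self-contained derivation via the intertwining relation $F_a\circ\pi=\pi\circ f_a^{\times k}$, the resulting similarity $dF_a^n\sim P_\tau\cdot\mathrm{diag}\bigl((f_a^n)'(P_i)\bigr)$ at tuples with distinct entries, and the monomial-matrix eigenvalue computation. Your version is essentially an outline of the proof of the cited result, so it buys self-containment at the cost of the combinatorial bookkeeping you flag; the paper simply outsources that bookkeeping.

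One minor point: writing $d=m^2$ and $(f_a^n)'(P)=\pm m^n$ restricts attention to multiplication-by-$m$ Latt\`es maps, which is narrower than the statement. Your argument does not actually need this explicit form---it only uses that $\Per_n(f_a)$, together with the $f_a$-action and the multiplier labels, is abstractly independent of $a$, and that is exactly the isospectrality of the Latt\`es family on $\P^1$ that the paper invokes as input to the cited result.
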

\begin{proof}
    The multipliers of the symmetric product $F$ depend only on the multipliers of $f$ \cite{Hutz18}.
\end{proof}

\begin{exmp}
    We compute with an example from Gauthier-Hutz-Kaschner \cite{Hutz18}. Starting with the Latt\`es family
    \begin{align*}
        f_a:\P^1 &\to \P^1\\
        [u,v] &\mapsto [(u^2 - av^2)^2 , 4uv(u-v)(u - av)]
    \end{align*}
    we compute the $2$-symmetric product
    \begin{align*}
        F_a&: \P^2 \to \P^2\\
        [x,&y,z] \mapsto [((x+az)^2 - ay^2)^2,\\
         &4((x + az)^3y + 2(a + 1)(x + az)^2xz + a(x + az)y^3 - 8axyz(x + az) - (a + 1)(x^2y^2 + a^2y^2z^2)),\\
         &16xz(x-y+z)(x-ay+a^2z)].
    \end{align*}
    We compute the product of $(w-\gamma(t))$, without multiplicity, to have
    \begin{align*}
        \Sigma_n(f) =& w^5 + w^4(-5t^2 -4t) + w^3(10t^4 + 16t^3 - 4t^2 -32t - 48)\\
        &+ w^2(-10t^6 - 24t^5 + 12t^4 + 112t^3 + 240t^2 + 256t + 128)\\
        &+ w(5t^8 + 16t^7 - 12t^6 - 128t^5 - 336t^4 - 576t^3 - 576t^2 - 256t)\\
        &- t^{10} -4t^9 + 4t^8 + 48t^7 + 144t^6 + 320t^5 + 448t^4 + 256t^3.
    \end{align*}
    Note that this does not depend on the parameter $a$.
\begin{code}
\begin{python}
#symmetric lattes family from Gauthier-Hutz-Kaschner
#should have sigmas independent of t
sage: R.<u> = QQ[]
sage: P.<x,y,z>=ProjectiveSpace(FractionField(R),2)
sage: f=DynamicalSystem([((x+u*z)^2 - u*y^2)^2, 4*((x + u*z)^3*y + 2*(u + 1)*(x + u*z)^2*x*z + u*(x + u*z)*y^3 - 8*u*x*y*z*(x + u*z) - (u + 1)*(x^2*y^2 + 0*u*x^2*z^2 + u^2*y^2*z^2)), 4*4*x*z*(x-y+z)*(x-u*y+u^2*z)])
sage: s=sigma(f,1)

Wall time: 73933.57 s, ~36Gb memory
\end{python}
%w^5 + (-5)*w^4*t^2 + (-4)*w^4*t + 10*w^3*t^4 + 16*w^3*t^3 + (-4)*w^3*t^2 + (-32)*w^3*t + (-48)*w^3 + (-10)*w^2*t^6 + (-24)*w^2*t^5 + 12*w^2*t^4 + 112*w^2*t^3 + 240*w^2*t^2 + 256*w^2*t + 128*w^2 + 5*w*t^8 + 16*w*t^7 + (-12)*w*t^6 + (-128)*w*t^5 + (-336)*w*t^4 + (-576)*w*t^3 + (-576)*w*t^2 + (-256)*w*t - t^10 + (-4)*t^9 + 4*t^8 + 48*t^7 + 144*t^6 + 320*t^5 + 448*t^4 + 256*t^3
\end{code}
\end{exmp}

%\begin{thm}
%    Let $\phi_u(z) = \frac{F(z)}{G(z)}$ be any Latt\`es family of maps on $\P^1$. Let $\psi(z,w)$ be any rational map such that
%    \begin{equation*}
%        f_u: (\phi_u(z), \psi(z,w)) : \A^2 \to \A^2
%    \end{equation*}
%    is a morphism. Then the family $f_u$ is isospectral.
%\end{thm}
%\begin{proof}
%    \open{is this really true, or only when $\psi(z) = z^{m^2}$?}
%\end{proof}

Next we consider cartesian products of maps. Given morphisms $f:\P^N \to \P^N$ and $g:\P^M \to \P^M$, we define a map $h = f \times g:\P^{N+M+1} \to \P^{N+M+1}$ as the induced map by the coordinates of $f$ and $g$. Specifically,
\begin{align*}
    h(x_0,\ldots,x_N,x_{N+1},\ldots,x_{N+M+1}) &= [f_0(x_0,\ldots,x_N),\ldots,f_N(x_0,\ldots,x_N),\\
      &g_0(x_{N+1},\ldots,x_{N+M+1}), \ldots, g_M(x_{N+1},\ldots,x_{N+M+1})].
\end{align*}
For this product to be well defined as a projective map, we must have $\deg(f) = \deg(g)$. Note that the resulting map $h$ is a morphism.

\begin{lem}\label{lem_cart_fixed_pt}
    Let $f:\P^N \to \P^N$ and $g:\P^M \to \P^M$ be morphisms of degree $d > 1$ and $h = f \times g$. The fixed points of $h$ are of the following three forms.
    \begin{enumerate}
        \item (fixed point of $f)\cdot k\cdot (d-1$ root of unity) $\times$ (fixed point of $g$) where $k$ satisfies $f(kx_0,\ldots, kx_N)=(k\alpha x_0,\ldots, k\alpha x_N)$, where $g(x_{N+1},\ldots,x_{N+M+1}) = (\alpha  x_{N+1},\ldots, \alpha  x_{N+M+1})$ for some nonzero constant $\alpha$
        \item $[0,\ldots,0] \times$ (fixed point of $g$)
        \item (fixed point of $f$)$ \times [0,\ldots,0]$
    \end{enumerate}
\end{lem}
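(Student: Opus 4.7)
The plan is to work directly from the definition of a fixed point in $\P^{N+M+1}$. Write a representative of $P$ as $(X, Y)$ with $X = (x_0,\ldots,x_N)$ and $Y = (x_{N+1},\ldots,x_{N+M+1})$. Then $h(P) = P$ in projective space is equivalent to the existence of a scalar $\lambda \in K^{\ast}$ such that $f(X) = \lambda X$ and $g(Y) = \lambda Y$ simultaneously. So the entire content of the lemma is driven by the requirement that the same scalar $\lambda$ appears in both blocks.

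I would then case split on whether $X$ or $Y$ is zero. If $X = 0$ then $Y \neq 0$ (else $P$ is not a point of projective space), the condition on $X$ is vacuous, and $g(Y) = \lambda Y$ is exactly the statement that $[Y]$ is a fixed point of $g$ in $\P^M$, giving case~(2); the symmetric case $Y = 0$ gives case~(3). In these degenerate cases no compatibility of multipliers is required, because the zero block absorbs any scalar.

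The main case is $X, Y$ both nonzero. Fix representatives $X_0$, $Y_0$ of projective fixed points of $f$ and $g$, with $f(X_0) = \beta X_0$ and $g(Y_0) = \alpha Y_0$; since $f$ and $g$ are morphisms (their defining polynomials have no common projective zero), $\alpha$ and $\beta$ are nonzero. Any representative of the first block has the form $kX_0$ for $k \in K^{\ast}$, and homogeneity of degree $d$ yields $f(kX_0) = k^d\beta X_0 = (k^{d-1}\beta)(kX_0)$. Holding the $g$ representative at $Y_0$ forces $\lambda = \alpha$, so the constraint becomes $k^{d-1} = \alpha/\beta$, a polynomial equation with exactly $d-1$ solutions in $\overline{K}$ differing by $(d-1)$-th roots of unity. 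Distinct choices of $k$ give distinct points of $\P^{N+M+1}$, since a projective identification $[kX_0 : Y_0] = [k'X_0 : Y_0]$ with $Y_0 \neq 0$ forces $k = k'$. This yields exactly the form described in case~(1).

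The main obstacle is the projective bookkeeping in case~(1): the fact that the $d-1$ scaling choices actually give $d-1$ distinct fixed points (rather than collapsing to the same projective point) is what requires the $Y \neq 0$ hypothesis. As a sanity check on the count, one can verify that $(d-1)\,|\Fix(f)|\,|\Fix(g)| + |\Fix(f)| + |\Fix(g)|$ equals $(d^{N+M+2}-1)/(d-1)$, the expected number of fixed points of $h$ in $\P^{N+M+1}$, which confirms no cases have been missed or double counted.
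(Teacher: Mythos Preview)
Your argument is correct and in fact more direct than the paper's. The paper does not derive the three forms from the equation $h(P)=P$; it simply asserts that each listed point is fixed and that the listed points are distinct, and then relies entirely on the counting identity
\[
(d-1)\cdot\frac{d^{N+1}-1}{d-1}\cdot\frac{d^{M+1}-1}{d-1}+\frac{d^{N+1}-1}{d-1}+\frac{d^{M+1}-1}{d-1}=\frac{d^{N+M+2}-1}{d-1}
\]
to conclude that nothing has been omitted. Your approach instead starts from the existence of a common scalar $\lambda$ with $f(X)=\lambda X$ and $g(Y)=\lambda Y$, performs the natural case split on $X=0$, $Y=0$, or both nonzero, and in the nondegenerate case solves $k^{d-1}=\alpha/\beta$ explicitly; the count appears only as a consistency check. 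The advantage of your route is that it explains where the $(d-1)$-st roots of unity come from and does not lean on a multiplicity count to rule out extra fixed points; the paper's route is terser and avoids having to discuss the scaling parameter $k$ at all, at the cost of relying on the numerical coincidence to establish exhaustiveness.
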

\begin{proof}
    Recall that for a morphism of degree $d$ on $\P^N$ there are $\frac{d^{N+1}-1}{d-1} = d^N + \cdots + d + 1$ fixed points (counted with multiplicity). Each of the points enuemrated in the statement is clearly fixed and distinct from each other %\open{$d-1$ so that $\zeta^d = \zeta$}
    and there are a total of
    \begin{align*}
        (d-1)\cdot \frac{d^{N+1}-1}{d-1}\cdot \frac{d^{M+1}-1}{d-1} + \frac{d^{N+1}-1}{d-1} + \frac{d^{M+1}-1}{d-1} &= d^{N+1} \frac{d^{M+1}-1}{d-1} + \frac{d^{N+1}-1}{d-1}\\
        &= d^{N+M+1} + \cdots + d+1
    \end{align*}
    of these points (counted with multiplicity). Therefore, these are all of the fixed points.
\end{proof}

%    Now we compute multipliers.
%    \begin{enumerate}
%        \item For these we have
%        \begin{equation*}
%            \gamma_p = \begin{pmatrix}
%            \frac{ g_i\frac{\partial f_0}{\partial x} - f_0\cdot 0}{(g_i)^2} & \frac{ g_i\frac{\partial f_1}{\partial x} - f_1\cdot 0}{(g_i)^2} & \frac{ g_i\cdot 0 - g_j\cdot 0}{(g_i)^2} \\
%            \frac{ g_i\frac{\partial f_0}{\partial y} - f_0\cdot 0}{(g_i)^2} & \frac{ g_i\frac{\partial f_1}{\partial y} - f_1\cdot 0}{(g_i)^2} & \frac{ g_i\cdot 0 - g_j\cdot 0}{(g_i)^2}\\
%            \frac{ g_i\cdot 0 - f_0\cdot \frac{\partial g_i}{\partial z}}{(g_i)^2} & \frac{ g_i\cdot 0 - f_1\cdot \frac{\partial g_i}{\partial z}}{(g_i)^2} & \frac{ g_i\cdot \frac{\partial g_j}{\partial z} - g_j\cdot \frac{\partial g_i}{\partial z}}{(g_i)^2}
%            \end{pmatrix} =
%            \begin{pmatrix}
%            \frac{1}{g_i}\frac{\partial f_0}{\partial x} & \frac{1}{g_i}\frac{\partial f_1}{\partial x} & 0\\
%            \frac{1}{g_i}\frac{\partial f_0}{\partial y}& \frac{1}{g_i}\frac{\partial f_1}{\partial y} & 0\\
%            -- & -- & \lambda_G
%            \end{pmatrix}
%        \end{equation*}
%        \open{So this clearly works for polynomial, but doesn't seem like it should work in general as $g_i(Q)$ should depend on the parameter!}
%        \item For these we have $t^2(t-\lambda)$
%        \item For these we have $t^2(t-\lambda)$
%    \end{enumerate}

\begin{thm}\label{thm_cart_prod}
    Let $f_a:\P^N \to \P^N$ and $g_b:\P^M \to \P^M$ be isospectral families of morphisms with $\deg(f_a) = \deg(g_b)$. Then the cartesian product family $h_{a,b} = f_a \times g_b$ is isospectral in $\Hom_d^{N+M+1}$.
\end{thm}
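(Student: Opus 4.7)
The plan is to exploit the product structure: since $h_{a,b}^{n} = f_{a}^{n} \times g_{b}^{n}$ is itself a cartesian product of morphisms of degree $d^{n}$, and isospectrality only requires controlling the multi-set $\Gamma_{n}(h_{a,b})$ for every $n$, it suffices to analyze the multiplier polynomial of an arbitrary cartesian product $h = f \times g$ at each of its fixed points and then apply the same analysis to iterates.

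First, I would apply Lemma~\ref{lem_cart_fixed_pt} to split $\Fix(h)$ into the three enumerated types and compute $\gamma_{h,R}$ in each case by writing $h$ in a well-chosen affine chart. For a type~2 point $R = [0,\ldots,0] \times Q$, dehomogenizing at a nonzero coordinate of $Q$ produces a Jacobian at $R$ whose upper-left $(N+1)\times(N+1)$ block vanishes identically (the partial derivatives of the $f$-coordinates at the origin are all zero since each $f_i$ is homogeneous of degree $d \geq 2$), whose off-diagonal blocks are zero (using also $f(0)=0$), and whose lower-right $M \times M$ block is exactly the multiplier matrix of $g$ at $Q$. Hence $\gamma_{h,R}(t) = t^{N+1}\gamma_{g,Q}(t)$, and the type~3 case is symmetric, yielding $t^{M+1}\gamma_{f,P}(t)$.

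The heart of the argument is type~1, where $R = kP \times Q$ with $k^{d-1} = \alpha/\beta$ after choosing affine lifts so that $f(P) = \beta P$ and $g(Q) = \alpha Q$. I would dehomogenize at a nonzero coordinate of $P$; the Jacobian becomes block lower-triangular with upper-left block equal to the standard multiplier matrix of $f$ at $P$ and lower-right block equal to $\tfrac{1}{\alpha}\, D\hat{g}(Q)$, where $\hat{g}$ is the homogeneous lift of $g$. By Euler's identity, the eigenvalues of $D\hat{g}(Q)$ are $d\alpha$ (with eigenvector $Q$) together with $\alpha\lambda_{1},\ldots,\alpha\lambda_{M}$, the multipliers of $g$ at $Q$, so after dividing by $\alpha$ the lower-right block has characteristic polynomial $(t-d)\gamma_{g,Q}(t)$. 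The scaling factor $k$ enters only through the relation $k^{d-1}\beta = \alpha$, which is precisely what makes the $k$-dependence cancel. Hence $\gamma_{h,R}(t) = (t-d)\gamma_{f,P}(t)\gamma_{g,Q}(t)$, independent of which of the $d-1$ choices of $k$ is used.

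Combining the three cases, $\Gamma_{1}(h)$ as a multi-set is a universal function of $\Gamma_{1}(f)$, $\Gamma_{1}(g)$, and the integers $d, N, M$. Applying this to $h_{a,b}^{n} = f_{a}^{n} \times g_{b}^{n}$ (a cartesian product of degree $d^{n}$) gives the analogous statement for $\Gamma_{n}(h_{a,b})$ in terms of $\Gamma_{n}(f_{a})$ and $\Gamma_{n}(g_{b})$. Since $f_{a}$ and $g_{b}$ are isospectral, $\Gamma_{n}(f_{a})$ and $\Gamma_{n}(g_{b})$ are independent of $a$ and $b$ respectively for every $n$, so $\Gamma_{n}(h_{a,b})$, and hence $\Sigma_{n}(h_{a,b})$ and all the $\sigma_{i,j}^{(n)}$, are independent of $(a,b)$ for every $n$, which is the definition of isospectrality. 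The main technical obstacle I anticipate is the bookkeeping in the type~1 calculation: tracking the projective scaling constants $k, \alpha, \beta$ carefully and verifying via Euler's identity that they conspire to produce exactly the extra eigenvalue $d$; the other two types are considerably cleaner.
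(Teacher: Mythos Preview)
Your proposal is correct and follows essentially the same approach as the paper: both use Lemma~\ref{lem_cart_fixed_pt} to split into the three types of fixed points, exploit the block-triangular structure of the Jacobian in a suitable affine chart, and invoke Euler's identity to extract the extra eigenvalue $d$ in the type~1 case. Your write-up is slightly more explicit in two respects---you state the resulting characteristic polynomials $t^{N+1}\gamma_{g,Q}$, $t^{M+1}\gamma_{f,P}$, $(t-d)\gamma_{f,P}\gamma_{g,Q}$ in closed form, and you spell out the passage to iterates via $h_{a,b}^{n}=f_a^{n}\times g_b^{n}$ to cover all $n$---but these are refinements of presentation rather than a different argument.
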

\begin{proof}
    We need to show that the eigenvalues of the multiplier matrices of $h_{a,b}$ depend only on the eigenvalues of the multiplier matrices of $f_a$ and $g_b$. We consider each type of fixed point of $h_{a,b}$ from Lemma \ref{lem_cart_fixed_pt} in turn.
    \begin{itemize}
        \item For a fixed point $Q$ of the form (fixed point of $f)\cdot k\cdot (d-1$ root of unity) $\times$ (fixed point of $g$), where $k$ satisfies $f(kx_0,\ldots, kx_N)=(k\alpha x_0,\ldots, k\alpha x_N)$ and where $g(x_{N+1},\ldots,x_{N+M+1}) = (\alpha  x_{N+1},\ldots, \alpha  x_{N+M+1})$, notate
            \begin{equation*}
                Q = \alpha \zeta Q_f \times Q_g,
            \end{equation*}
            where $\zeta$ is the $d-1$st root of unity.
            We will show that the eigenvalues of the multiplier matrix of $Q$ are the eigenvalues of the multiplier matrix of $Q_f$, the eigenvalues of the multiplier matrix of $Q_g$, and $d$.

            At least one coordinate of $Q_f$ is nonzero, if we dehomogenize $h_{a,b}$ at that coordinate and compute the multiplier matrix, we have a matrix of the form
            \begin{equation*}
                \begin{pmatrix}m_{f,Q_f} & 0\\-- & G\end{pmatrix}
            \end{equation*}
            so the eigenvalues of the multiplier matrix of $Q_f$ for $f$ are eigenvalues of this matrix. Similarly at least one coordinate of $Q_g$ is nonzero and we see that the eigenvalues of the multiplier matrix of $Q_g$ are also eigenvalues.

            There is one remaining undetermined eigenvalue, which we now show is $d$, the degree of $h_{a,b}$. Let $i$ be the coordinate of $Q_g$ that is nonzero. Dehomogenizing at $i$ and, with a slight abuse of notation, labeling the new coordinates $(x_0,\ldots,x_N,x_{N+1},\ldots, \hat{x_i},\ldots,x_{n+M+1})$ and computing the multiplier matrix, we get
            \begin{equation*}
            \begin{pmatrix}
                \frac{\partial f_j/g_i}{\partial (x_0,\ldots,x_N)} & 0\\
                -- & m_{G,Q_g}
            \end{pmatrix}
            \end{equation*}
            Where the upper left hand block is the Jacobian matrix of the dehomogenization with respect to the first $N+1$ variables,
            \begin{equation*}
                J_f = \begin{pmatrix}
                    \frac{g_i\frac{\partial f_0}{\partial x_0}}{g_i^2} & \cdots & \frac{g_i\frac{\partial f_0}{\partial x_N}}{g_i^2} \\
                    \vdots&\cdots & \vdots\\
                    \frac{g_i\frac{\partial f_N}{\partial x_0}}{g_i^2} & \cdots & \frac{g_i\frac{\partial f_N}{\partial x_N}}{g_i^2}
                \end{pmatrix} = \frac{1}{g_i}\begin{pmatrix}
                    \frac{\partial f_0}{\partial x_0} & \cdots & \frac{\partial f_0}{\partial x_N}\\
                    \vdots&\cdots & \vdots\\
                    \frac{\partial f_N}{\partial x_0} & \cdots & \frac{\partial f_N}{\partial x_N}
                \end{pmatrix}
            \end{equation*}
            We are looking for roots of the characteristic polynomial $\det(J_f - tId)$ or, equivalently, that the matrix $J_f - dId$ is singular. We will see that its columns are dependent using Euler's identity for homogeneous polynomials
            \begin{equation*}
                df_j = \sum_{l=0}^{N} x_l \frac{\partial f_j}{\partial x_l}.
            \end{equation*}
            Taking the linear combination of the columns with $(x_0,\ldots,x_N)$ we arrive at the column vector
            \begin{equation*}
                \begin{pmatrix}
                \left(\sum_{l=0}^N x_l\frac{\partial f_0}{\partial x_l}\right) - x_0 d\\
                \vdots\\
                \left(\sum_{l=0}^N x_l\frac{\partial f_N}{\partial x_l}\right) - x_N d
                \end{pmatrix}.
            \end{equation*}
            Notice that $f_i(x_0,\ldots, x_N) = x_i$ since we are working with a fixed point (the factor of $\alpha$ cancels since we have dehomogenized). Therefore, by Euler's identity, this is the zero column vector and the columns are dependent. Hence, the matrix $(J_f - dId)$ is singular and $d$ is an eigenvalue of $J_f$.

        %If we assume that $xy \neq 0$, This gives
%        \begin{align*}
%            d-(f_0)_x = \frac{y}{x}(f_0)_y\\
%            d-(f_1)_y = \frac{x}{y}(f_1)_x
%        \end{align*}
%        Putting these into the characteristic polynomial we get
%        \begin{equation*}
%            \frac{x}{y}(f_0)_y \frac{y}{x}(f_1)_x - (f_0)_y(f_1)_x = 0.
%        \end{equation*}
%
%        Now if $x=0$, then $y \neq 0$ and we get
%        \begin{align*}
%            0 = 0 + y(f_0)_y \to (f_0)_y = 0\\
%            dy = 0 + y(f_1)_y \to d = (f_1)_y
%        \end{align*}
%        In the characteristic polynomial we get
%        \begin{equation*}
%            ((f_0)_x - d)(d-d) - (f_0)_y(f_1)_z = 0-0 = 0.
%        \end{equation*}
%
%        Finally, if $y=0$, then $x \neq 0$ and we get
%        \begin{align*}
%            dx = x(f_0)_x + 0 \to d=(f_0)_x \\
%            0 = x(f_1)_x + 0 \to (f_1)_x=0
%        \end{align*}
%        In the characteristic polynomial we get
%        \begin{equation*}
%            (d - d)((f_1)_y-d) - (f_0)_y(f_1)_x = 0-0 = 0.
%        \end{equation*}

        \item For the remaining two fixed point forms, we dehomogenize at a nonzero coordinate (say either $f_i$ or $g_i$). Then the multiplier matrix is of the form
            \begin{equation*}
            \begin{pmatrix}
                0 & 0\\
                -- & m_{G,Q_g}
            \end{pmatrix},
            \end{equation*}
            so we have eigenvalues $0$ and $\lambda_{Q_g}$ (or $0$ and $\lambda_{Q_f}$, respectively).
    \end{itemize}

    The eigenvalues of the multiplier matrices of $h_{a,b}$ depend only on the degree, the dimension, and the eigenvalues of the multiplier matrices of $f_a$ and $g_b$. Since $f_a$ and $g_b$ are isospectral, then so is $h_{a,b}$.
\end{proof}

%\begin{exmp}
%Simple product example. 2-parameter isospectral family
%\begin{python}
%P1.<a,b>=ProjectiveSpace(QQ,1)
%F = P1.Lattes_map(EllipticCurve(QQ,[0,1]),2)
%G = P1.Lattes_map(EllipticCurve(QQ,[0,2]),2)
%P3.<x,y,z,w>=ProjectiveSpace(QQ,3)
%phi=P1.coordinate_ring().hom([z,w],P3.coordinate_ring())
%R=P3.coordinate_ring()
%f=DynamicalSystem([R(F[0]),R(F[1]),phi(G[0]),phi(G[1])])
%f.is_morphism()
%s1=sigma(f,1)
%
%P1.<a,b>=ProjectiveSpace(QQ,1)
%F = P1.Lattes_map(EllipticCurve(QQ,[0,2]),2)
%G = P1.Lattes_map(EllipticCurve(QQ,[0,3]),2)
%P3.<x,y,z,w>=ProjectiveSpace(QQ,3)
%phi=P1.coordinate_ring().hom([z,w],P3.coordinate_ring())
%R=P3.coordinate_ring()
%g=DynamicalSystem([R(F[0]),R(F[1]),phi(G[0]),phi(G[1])])
%g.is_morphism()
%s2=sigma(g,1)
%
%s1==s2
%\end{python}
%\end{exmp}

\begin{exmp}
    Consider the two maps
    \begin{align*}
        f_a:\P^1 &\to \P^1 \quad [u,v] \mapsto [(u^2 - av^2)^2 , 4uv(u-v)(u - av)]\\
        g:\P^1 &\to \P^1 \quad
        [z,w] \mapsto [z^4,w^4]
    \end{align*}
    The cartesian product is the family
    \begin{align*}
        F_a:\P^3 &\to \P^3\\
        [(u^2 &- av^2)^2 , 4uv(u-v)(u - av) , z^4 , w^4].
    \end{align*}
    We compute the $\sigma_{i,j}^{(1)}$ (in Sage) and see that the result does not depend on the parameter $a$.
    \begin{align*}
        \Sigma_1(f) = &w^{11} + w^{10}(-11t^3 + 40t^2 -64t + 32)\\
        &+ w^9(55t^6 -400t^5 + 1272t^4 -2080t^3 + 1280t^2 + 1024t -2048)\\
        &+ w^8(-165t^9 + 1800t^8 -8568t^7 + 22240t^6 -28864t^5 -128t^4 + 59392t^3 -75776t^2 + 32768t)\\
        &+ \cdots%\\
    \end{align*}

\begin{code}
\begin{python}
sage: R.<a>=QQ[]
sage: P.<u,v,z,w>=ProjectiveSpace(FractionField(R),3)
sage: f=DynamicalSystem([(u^2 - a*v^2)^2 , 4*u*v*(u-v)*(u - a*v), z^4,w^4])
sage: sigma(f,1)
\end{python}
\end{code}
\end{exmp}

Note that we can create similar families for $\P^2$.
\begin{exmp}
    Consider the Latt\`es family given by multiplication by $2$ on $y^2 = x^3 + a$:
    \begin{align*}
        f_a:\P^1 \to \P^1 \quad [u,v] \mapsto [u^4 + (-8a)uv^3 : 4u^3v + 4av^4]
    \end{align*}
    and the family
    \begin{align*}
        F_a:\P^2 &\to \P^2\\
        [ x^4 + &(-8a)xy^3 : 4x^3y + 4ay^4 : z^4 ].
    \end{align*}
    We compute the $\sigma_{i,j}^{(1)}$ (in Sage) without multiplicity and see that the result does not depend on the parameter $a$.
    \begin{align*}
        \Sigma_1(f) = &w^5 + w^4(-5t^2 + 8t -8) + w^3(10t^4 -32t^3 + 28t^2 + 48t -128)\\
        &+ w^2(-10t^6 + 48t^5 -36t^4 -176t^3 + 320t^2 + 256t) + w(5t^8 -32t^7\\
        &+ 20t^6 + 208t^5 -256t^4 -512t^3) - t^{10} + 8t^9 + (-4)t^8 + (-80)t^7 + 64t^6 + 256t^5
    \end{align*}

\begin{code}
\begin{python}
sage: R.<a>=QQ[]
sage: P1.<u,v>=ProjectiveSpace(R,1)
sage: F = P1.Lattes_map(EllipticCurve(R,[0,a]),2)
sage: P2.<x,y,z>=ProjectiveSpace(FractionField(R),2)
sage: S=P2.coordinate_ring()
sage: g=DynamicalSystem([x^4 + (-8*a)*x*y^3 , 4*x^3*y + 4*a*y^4,z^4])
sage: sigma(g,1)
\end{python}
\end{code}
\end{exmp}

Next we consider products of maps (of the same degree) embedded into $\P^N$ by the Segre embedding. Given morphisms $f:\P^N \to \P^N$ and $g:\P^M \to \P^M$, we define a map $f \times g:\P^{N} \times \P^M \to \P^{N} \times \P^M$. Via the Segre embedding, this product induces a map $h:\P^{(N+1)(M+1)-1} \to \P^{(N+1)(M+1)-1}$.

\begin{thm} \label{iso_segre}
    Let $f_a:\P^N \to \P^N$ be an isospectral family of degree $d\geq 2$ and $g:\P^M \to \P^M$ the degree $d$ powering map. Then the family of endomorphisms of $h_{a}:\P^{(N+1)(M+1)-1} \to \P^{(N+1)(M+1)-1}$ induced by the Segre embedding of $f_a \times g$ is isospectral in $\Hom_d^{(N+1)(M+1)-1}$.
\end{thm}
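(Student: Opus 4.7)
The plan is to mimic the proof of Theorem \ref{thm_cart_prod}: show that the multiplier polynomial of $h_a$ at every $n$-periodic point depends only on the multiplier spectrum of $f_a^n$ together with the fixed combinatorial data $d, N, M$, and then invoke isospectrality of $f_a$.

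First I would work out the explicit form of $h_a$ in Segre coordinates. Writing a point of $\P^{(N+1)(M+1)-1}$ as an $(N+1) \times (M+1)$ matrix $Z=(z_{ij})$ with $z_{ij}=x_i y_j$ on the Segre image, the fact that $g$ is the powering map $y_j \mapsto y_j^d$, together with the identity $x_{k_1}\cdots x_{k_d}\,y_j^d = z_{k_1,j}\cdots z_{k_d,j}$, shows that the natural lift of the induced map is $h_a(Z)_{ij} = (F_a)_i(\mathrm{col}_j(Z))$, where $F_a$ is the degree-$d$ affine lift of $f_a$. Thus $h_a$ acts column-by-column via $F_a$, and $h_a^n(Z)_{ij} = (F_a^n)_i(\mathrm{col}_j(Z))$. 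It follows that a class $[Z]$ is $n$-periodic for $h_a$ exactly when each nonzero column $v_j$ of a lift $Z$ satisfies $F_a^n(v_j) = \mu v_j$ for one common scalar $\mu$. Each such periodic point is thus encoded by (i) the set $S \subseteq \{0,\ldots,M\}$ of nonzero column indices, (ii) for every $j \in S$ a projective $n$-periodic point $[v_j]$ of $f_a$, and (iii) affine scalings of the chosen representatives that realize a common $\mu$. Over an algebraically closed field there are $d^n-1$ choices of scaling per nonzero column, and after quotienting by the overall projective scaling (a free $(d^n-1)$-th root of unity action) exactly $(d^n-1)^{|S|-1}$ inequivalent $n$-periodic points of each combinatorial type remain; summing over types recovers the expected count $(d^{n(N+1)(M+1)}-1)/(d^n-1)$.

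The core computation is the multiplier polynomial at such a point. Since $H_a^n$ is column-wise, $DH_a^n(Z)$ is block diagonal with one $(N+1)\times(N+1)$ block $DF_a^n(v_j)$ per column. A zero column contributes eigenvalues $0^{N+1}$ since $DF_a^n(0)=0$ for $d \geq 2$. For a nonzero column with $F_a^n(v_j) = \mu v_j$, Euler's identity gives $DF_a^n(v_j)\,v_j = d^n \mu\, v_j$, so $d^n \mu$ is one eigenvalue; the remaining $N$ eigenvalues are $\mu \lambda_{j,1}, \ldots, \mu \lambda_{j,N}$, where the $\lambda_{j,i}$ are the multiplier eigenvalues of $f_a^n$ at $[v_j]$. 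Passing to the projective tangent space at $[Z]$, that is, dividing by $\mu$ and removing a single direction corresponding to the scaling of $Z$ (which removes exactly one eigenvalue $d^n$ in total, by Euler's identity applied to $H_a^n$), yields
\begin{equation*}
\gamma_{h_a^n,[Z]}(t) = (t-d^n)^{|S|-1}\,t^{(M+1-|S|)(N+1)}\prod_{j \in S}\gamma_{f_a^n,[v_j]}(t),
\end{equation*}
which indeed has total degree $(N+1)(M+1)-1$.

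Finally I would assemble $\Sigma_n(h_a) = \prod_{[Z]}\bigl(w - \gamma_{h_a^n,[Z]}(t)\bigr)$ by grouping the $n$-periodic points by combinatorial type. For each fixed $S$, the factor arising from varying the assignment $j \mapsto [v_j] \in \Per_n(f_a)$ is a symmetric expression in the multi-set $\{\gamma_{f_a^n,P} : P \in \Per_n(f_a)\}$, weighted by the multiplicity $(d^n-1)^{|S|-1}$ and by the universal factors $(t-d^n)^{|S|-1} t^{(M+1-|S|)(N+1)}$. Consequently $\Sigma_n(h_a)$ is a polynomial expression in the invariants $\sigma_{i,j}^{(n)}(f_a)$ together with $d$, $N$, $M$; since $f_a$ is isospectral these invariants are constant in $a$, so $\Sigma_n(h_a)$ is constant in $a$ for every $n$, proving $h_a$ is isospectral. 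The main obstacle is the multiplier-polynomial step: one must carefully verify, using Euler's identity for $H_a^n$, that only a single $d^n$ (rather than one per nonzero column) is removed in the projective quotient, and confirm that the combinatorial multiplicity $(d^n-1)^{|S|-1}$ per type sums to the correct total number of $n$-periodic points. Once these bookkeeping points are pinned down, the symmetry of the product over assignments collapses $\Sigma_n(h_a)$ to an expression in the multiplier invariants of $f_a^n$, and isospectrality follows at once.
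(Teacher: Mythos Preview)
Your proposal is correct and shares the paper's key observation: when $g$ is the powering map, the Segre-induced endomorphism acts column-by-column via $F_a$, which is to say (after permuting coordinates) $h_a$ is the cartesian product of $M{+}1$ copies of $f_a$. The paper stops right there and invokes Theorem~\ref{thm_cart_prod} inductively, giving a two-line proof. You instead re-derive the content of that theorem directly, classifying the $n$-periodic points by the set $S$ of nonzero columns and computing the multiplier polynomial
\[
\gamma_{h_a^n,[Z]}(t)=(t-d^n)^{|S|-1}\,t^{(M+1-|S|)(N+1)}\prod_{j\in S}\gamma_{f_a^n,[v_j]}(t)
\]
explicitly. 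Both routes rest on the identical structural fact; the paper's is much shorter, while yours is self-contained and yields the explicit multiplier formula and the combinatorial multiplicity $(d^n-1)^{|S|-1}$ per type as by-products.
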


\begin{proof}
    The map induced by the Segre embedding of $f_a$ and the powering map is (after permuting coordinates) the cartesian product of $M+1$ copies of $f_a$. Applying Theorem \ref{thm_cart_prod} inductively to the product gives the result.
\end{proof}

\begin{exmp}
    Consider the Latt\`es map induced by multiplication by 2 on the Mordell family: $y^2 = x^3 + a$,
    \begin{align*}
        F_a:\P^1 &\to \P^1\\
        (x : y) &\mapsto (x^4 + (-8a)xy^3 : 4(x^3y + ay^4))
    \end{align*}
    and the powering map
    \begin{align*}
        G:\P^1 &\to \P^1\\
        (x : y) &\mapsto (x^4 : y^4).
    \end{align*}

    We compute the map induced on $\P^3 \to \P^3$ by the Segre embedding given by
    \begin{align*}
        f_a:\P^3 &\to \P^3\\
        (u_0 : u_1 : u_2 : u_2) &\mapsto (u_0^4 + (-8a)u_0u_2^3 : u_1^4 + (-8a)u_1u_3^3 :\\
        &4(u_0^3u_2 + au_2^4) : 4(u_1^3u_3 + au_3^4)).
    \end{align*}
    We compute the $\sigma_{i,j}^{(1)}$ (in Sage) without multiplicity and see that they do not depend on $a$
    \begin{align*}
        \Sigma_1(f) = &w^8 + w^7(-8t^3 + 8t^2 + 20t -16)\\
        &+ w^6(28t^6 -56t^5 -140t^4 + 256t^3 + 160t^2 -512t -512)\\
        &+ w^5(-56t^9 + 168t^8 + 420t^7 -1280t^6 -1056t^5 + 4032t^4 + 3072t^3 -4096t^2 -4096t)\\
        &+ w^4(70t^{12} -280t^{11} -700t^{10} + 3120t^9 + 2864t^8 -13888t^7 -9856t^6 + 26624t^5 + 26624t^4)\\
        &+ \cdots
        %+ (-56)w^3t^15 + 280w^3t^14 + 700w^3t^13 + (-4240)w^3t^12 + (-4096)w^3t^11 + 25664w^3t^10 + 19520w^3t^9 + (-72192)w^3t^8 + (-73728)w^3t^7 + 40960w^3t^6 + 49152w^3t^5 + 28w^2t^18 + (-168)w^2t^17 + (-420)w^2t^16 + 3296w^2t^15 + 3264w^2t^14 + (-26304)w^2t^13 + (-22464)w^2t^12 + 100096w^2t^11 + 114688w^2t^10 + (-122880)w^2t^9 + (-212992)w^2t^8 + (-65536)w^2t^7 + (-8)wt^21 + 56wt^20 + 140wt^19 + (-1376)wt^18 + (-1376)wt^17 + 14080wt^16 + 13504wt^15 + (-69632)wt^14 + (-94208)wt^13 + 122880wt^12 + 278528wt^11 + 131072wt^10 + t^24 + (-8)t^23 + (-20)t^22 + 240t^21 + 240t^20 + (-3072)t^19 + (-3264)t^18 + 19200t^17 + 30720t^16 + (-40960)t^15 + (-114688)t^14 + (-65536)t^13
    \end{align*}
\begin{code}
\begin{python}
sage: S1.<a>=QQ[]
sage: S=FractionField(S1)
sage: P1.<x,y>=ProjectiveSpace(S,1)
sage: F = P1.Lattes_map(EllipticCurve(S,[0,a]),2)
sage: G = DynamicalSystem([x^4,y^4])
sage: F,G
sage: P3.<x0,x1,x2,x3>=ProjectiveSpace(S,3)
sage: phi=P1.coordinate_ring().hom([x0,x1],P3.coordinate_ring())
sage: phi2=P1.coordinate_ring().hom([x2,x3],P3.coordinate_ring())
sage: R=P3.coordinate_ring()
sage: f=DynamicalSystem([phi(F[0]),phi(F[1]),phi2(G[0]),phi2(G[1])])
sage: s11=(P1*P1).segre_embedding()

sage: R.<x0,x1,x2,x3,u0,u1,u2,u3>=PolynomialRing(S,order='lex')
sage: I=R.ideal([u0-x0*x2, u1-x0*x3, u2-x1*x2,u3-x1*x3, f[0]*f[2]])
sage: print I.groebner_basis()[8]
u0^4 + (-8*a)*u0*u2^3
sage: I=R.ideal([u0-x0*x2, u1-x0*x3, u2-x1*x2,u3-x1*x3, f[0]*f[3]])
sage: print I.groebner_basis()[13]
u1^4 + (-8*a)*u1*u3^3
sage: I=R.ideal([u0-x0*x2, u1-x0*x3, u2-x1*x2,u3-x1*x3, f[1]*f[2]])
sage: print I.groebner_basis()[9]
u0^3*u2 + a*u2^4
sage: I=R.ideal([u0-x0*x2, u1-x0*x3, u2-x1*x2,u3-x1*x3, f[1]*f[3]])
sage: print I.groebner_basis()[15]
u1^3*u3 + a*u3^4

sage: S1.<a>=QQ[]
sage: P3.<u0,u1,u2,u3>=ProjectiveSpace(FractionField(S1),3)
sage: f0=(u0^4 + (-8*a)*u0*u2^3)
sage: f1=(u1^4 + (-8*a)*u1*u3^3)
sage: f2=4*(u0^3*u2 + a*u2^4)
sage: f3=4*(u1^3*u3 + a*u3^4)
sage: f=DynamicalSystem([f0,f1,f2,f3])
sage: s11(s11.domain()([1,1,2,1])), s11(s11.domain()(list(F([1,1])) + list(G([2,1])))), f([2,1,2,1])
sage: sigma(f,1)
\end{python}
\end{code}
\begin{comment}
\begin{verbatim}
w^8 + (-8)*w^7*t^3 + 8*w^7*t^2 + 20*w^7*t + (-16)*w^7 + 28*w^6*t^6 + (-56)*w^6*t^5 + (-140)*w^6*t^4 + 256*w^6*t^3 + 160*w^6*t^2 + (-512)*w^6*t + (-512)*w^6 + (-56)*w^5*t^9 + 168*w^5*t^8 + 420*w^5*t^7 + (-1280)*w^5*t^6 + (-1056)*w^5*t^5 + 4032*w^5*t^4 + 3072*w^5*t^3 + (-4096)*w^5*t^2 + (-4096)*w^5*t + 70*w^4*t^12 + (-280)*w^4*t^11 + (-700)*w^4*t^10 + 3120*w^4*t^9 + 2864*w^4*t^8 + (-13888)*w^4*t^7 + (-9856)*w^4*t^6 + 26624*w^4*t^5 + 26624*w^4*t^4 + (-56)*w^3*t^15 + 280*w^3*t^14 + 700*w^3*t^13 + (-4240)*w^3*t^12 + (-4096)*w^3*t^11 + 25664*w^3*t^10 + 19520*w^3*t^9 + (-72192)*w^3*t^8 + (-73728)*w^3*t^7 + 40960*w^3*t^6 + 49152*w^3*t^5 + 28*w^2*t^18 + (-168)*w^2*t^17 + (-420)*w^2*t^16 + 3296*w^2*t^15 + 3264*w^2*t^14 + (-26304)*w^2*t^13 + (-22464)*w^2*t^12 + 100096*w^2*t^11 + 114688*w^2*t^10 + (-122880)*w^2*t^9 + (-212992)*w^2*t^8 + (-65536)*w^2*t^7 + (-8)*w*t^21 + 56*w*t^20 + 140*w*t^19 + (-1376)*w*t^18 + (-1376)*w*t^17 + 14080*w*t^16 + 13504*w*t^15 + (-69632)*w*t^14 + (-94208)*w*t^13 + 122880*w*t^12 + 278528*w*t^11 + 131072*w*t^10 + t^24 + (-8)*t^23 + (-20)*t^22 + 240*t^21 + 240*t^20 + (-3072)*t^19 + (-3264)*t^18 + 19200*t^17 + 30720*t^16 + (-40960)*t^15 + (-114688)*t^14 + (-65536)*t^13
\end{verbatim}
\end{comment}
\end{exmp}

It is not clear if endomorphisms of $\P^N$ induced by the Segre embedding applied to more general isospectral families are still isospectral. The question comes down to the fixed points of the induced map that are not in the image of the Segre embedding. It seems possible that the multipliers of these points could depend on the parameter. Surprisingly, the few examples attempted by computation appeared to be isospectral, but the full computation was beyond the reach of the machine being used.

%\begin{thm}
%    Let $f_s:\P^N \to \P^N$ and $g_t:\P^M \to \P^M$ be isospectral families, then the family of endomorphisms of $h_{s,t}:\P^{(N+1)(M+1)-1} \to \P^{(N+1)(M+1)-1}$ induced by the Segre embedding of $f_s \times g_t$ is isospectral.
%\end{thm}
%\begin{proof}
%\open{todo}
%\end{proof}

\subsection{Finite-to-One}
In this section, we prove the multiplier map $\tau_{d,n}^N$ is finite-to-one for certain special families.

\subsubsection{Split Polynomial Endomorphisms}
    We first treat the simplest case, split polynomial endomorphisms. On affine space, a split polynomial endomorphism is an endomorphism where each coordinate is a single variable polynomial: $F = (F_1(x_1), F_2(x_2),\ldots, F_N(x_N)):\A^N \to \A^N$, for polynomials $F_1,\ldots,F_N$. A projective split polynomial endomorphism is the homogenization of an affine split polynomial endomorphism. See Ghioca-Nguyen \cite{ghioca4} for a study of periodic subvarieties for split polynomial maps and the Dynamical Mordell-Lang Conjecture in the disintegrated (not Chebyshev or power map) case. We must have $\deg(F_1)= \cdots = \deg(F_N)$ for the resulting projective map to be a morphism.

    The following two lemmas are simple calculations.
    \begin{lem}\label{lem_split_multipliers}
        The multiplier matrix of an affine fixed point of a split polynomial endomorphism is diagonal with entries the multipliers of the fixed points of the coordinate polynomials as maps of $\A^1$.
    \end{lem}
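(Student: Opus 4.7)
The plan is to compute the Jacobian matrix directly from the definition. Write the split endomorphism on the affine chart as $F(x_1,\ldots,x_N) = (F_1(x_1),\ldots,F_N(x_N))$, and let $P = (p_1,\ldots,p_N)$ be an affine fixed point, so that $F_i(p_i) = p_i$ for every $i$.

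The key observation is that the $i$-th coordinate function $F_i$ depends only on the variable $x_i$. Therefore, for $i \neq j$ we have $\partial F_i/\partial x_j \equiv 0$, while for $i = j$ we have $\partial F_i/\partial x_i = F_i'(x_i)$. Consequently, the Jacobian matrix $dF_P$ is diagonal, with diagonal entries $F_i'(p_i)$. Each $F_i'(p_i)$ is, by definition, the multiplier at the fixed point $p_i$ of the one-variable map $F_i : \A^1 \to \A^1$, which yields the claim.

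The argument is essentially immediate from the definitions; there is no substantive obstacle. The only very mild point to note is that this computation takes place in an affine chart (so we are working with the genuine Jacobian rather than a dehomogenized version), which is consistent with the hypothesis that $P$ is an affine fixed point, and by Lemma \ref{lem_conj_mult} the resulting characteristic polynomial is intrinsic to the dynamical system.
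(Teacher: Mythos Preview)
Your proof is correct and matches the paper's approach; the paper itself simply labels this lemma a ``simple calculation'' and omits the details, which is exactly the direct Jacobian computation you give.
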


    \begin{lem} \label{lem_split_permutations}
        Let $f:\P^N \to \P^N$ be a split polynomial endomorphism. Let $m = \begin{pmatrix} m'&0\\0&1
        \end{pmatrix}$ for a $(N-1)\times(N-1)$ permutation matrix $m'$. Then the conjugation $f^{m}$ is obtained by permuting the first $N$ coordinate functions of $f$ by $m'$.
    \end{lem}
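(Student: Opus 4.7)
The plan is to unwind the definition of conjugation directly. First I would fix an explicit projective form for $f$. Write the split polynomial endomorphism in affine coordinates as $(F_1(x_1),\ldots,F_N(x_N))$ with $\deg(F_i)=d$ for all $i$, and let $x_0$ denote the homogenizing variable. Then
\begin{equation*}
f([x_1\col\cdots\col x_N\col x_0]) = [\tilde F_1(x_1,x_0)\col\cdots\col\tilde F_N(x_N,x_0)\col x_0^d],
\end{equation*}
where $\tilde F_i(x_i,x_0)=x_0^d F_i(x_i/x_0)$ depends only on the variables $x_i$ and $x_0$. The essential structural feature I would exploit is that each $\tilde F_i$ involves only the $i$th affine variable together with $x_0$.

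Next I would translate the matrix $m$ into its action on coordinates. Since $m'$ is an $N\times N$ permutation matrix (the statement's $(N-1)\times(N-1)$ should read $N\times N$ so that $m\in\PGL_{N+1}$), it corresponds to a permutation $\sigma\in S_N$, and $m$ acts by $[x_1\col\cdots\col x_N\col x_0]\mapsto[x_{\sigma(1)}\col\cdots\col x_{\sigma(N)}\col x_0]$, fixing the last coordinate. The inverse $m^{-1}$ corresponds to $\sigma^{-1}$.

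Then I would compute $f^m=m^{-1}\circ f\circ m$ by direct composition. Applying $m$ first substitutes $x_{\sigma(i)}$ for $x_i$ in the $i$th input slot, so
\begin{equation*}
f\circ m([x_1\col\cdots\col x_N\col x_0]) = [\tilde F_1(x_{\sigma(1)},x_0)\col\cdots\col\tilde F_N(x_{\sigma(N)},x_0)\col x_0^d].
\end{equation*}
Applying $m^{-1}$ then permutes the first $N$ output coordinates by $\sigma^{-1}$, so that the $i$th output coordinate of $f^m$ becomes $\tilde F_{\sigma^{-1}(i)}(x_{\sigma(\sigma^{-1}(i))},x_0)=\tilde F_{\sigma^{-1}(i)}(x_i,x_0)$. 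Hence $f^m$ is again a split polynomial endomorphism, obtained from $f$ by permuting its first $N$ coordinate functions according to $\sigma^{-1}$ (equivalently, by $(m')^{-1}=(m')^T$).

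There is really no obstacle here; the only delicate part is keeping straight which indices are being permuted and verifying that the separability of the variables in each $\tilde F_i$ is exactly what forces the conjugate to again be split (rather than introducing cross terms). Once that bookkeeping is done, the lemma follows immediately from the composition calculation above.
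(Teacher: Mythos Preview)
Your proposal is correct and is precisely the direct computation the paper has in mind; the paper labels this lemma (together with the preceding one) a ``simple calculation'' and gives no proof at all. Your observation that $m'$ must be $N\times N$ rather than $(N-1)\times(N-1)$ for $m\in\PGL_{N+1}$ is correct, and your care in noting that the coordinate functions end up permuted by $\sigma^{-1}$ rather than $\sigma$ is harmless for the application in Theorem~\ref{finite_split}, where all that is used is that every permutation of the coordinate functions arises from some conjugation.
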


    \begin{thm} \label{finite_split}
        The fixed point multiplier map, $\tau_{d,1}^N$, is generically $((d-2)!)^{N}$-to-one when restricted to split polynomial endomorphisms of degree $d$ on $\P^N$.
    \end{thm}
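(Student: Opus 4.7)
The plan is to reduce to the dimension one case by exploiting the product structure of split polynomial endomorphisms. Write $F = (F_1(x_1), \ldots, F_N(x_N))$ with each $F_i$ a degree-$d$ univariate polynomial. By Lemma \ref{lem_split_multipliers}, the multiplier matrix at an affine fixed point $P = (a_1, \ldots, a_N)$ is diagonal with entries $F_i'(a_i)$, so
$$\gamma_{F,P}(t) = \prod_{i=1}^{N} \bigl(t - F_i'(a_i)\bigr).$$
Writing $\Lambda^{(i)}$ for the multiset of fixed point multipliers of $F_i$ on $\A^1$, the affine portion of the polynomial multiplier spectrum is precisely the family of $N$-fold products $\prod_i (t-m_i)$ over tuples $(m_1,\ldots,m_N) \in \Lambda^{(1)} \times \cdots \times \Lambda^{(N)}$.

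Next I would prove a combinatorial reconstruction lemma recovering each $\Lambda^{(i)}$ from the data of $\Sigma_1(F)$. Restricting to the Zariski open locus where the $Nd$ affine multipliers are pairwise distinct (so the $\Lambda^{(i)}$ are pairwise disjoint) and where the multiplier polynomials from fixed points at infinity can be separated from the affine ones (for instance, by the presence of the eigenvalue $d$ arising from the direction normal to the hyperplane at infinity), I would define an equivalence relation on the collection of linear factors appearing in the affine $\gamma_{F,P}$ by declaring $\ell \sim \ell'$ whenever there exist affine fixed points $P, P'$ with $\gamma_{F,P}/\ell = \gamma_{F,P'}/\ell'$. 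A direct check then shows that if $\ell \in \Lambda^{(i)}$ and $\ell' \in \Lambda^{(j)}$ with $i \neq j$, the common quotient would simultaneously contain a factor from $\Lambda^{(j)}$ coming from $\gamma_{F,P}/\ell$ and omit every factor of $\Lambda^{(j)}$ in $\gamma_{F,P'}/\ell'$, contradicting disjointness. Hence the equivalence classes recover $\{\Lambda^{(1)}, \ldots, \Lambda^{(N)}\}$ as an unordered collection of $N$ multisets.

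Labeling the classes by coordinate indices carries an $S_N$ ambiguity, realized by conjugation by permutation matrices in $\PGL_{N+1}$ via Lemma \ref{lem_split_permutations}. Once a labeling is fixed, the elementary symmetric polynomials in $\Lambda^{(i)}$ are exactly the image of $F_i$ under the dimension one polynomial multiplier map, and Fujimura's theorem yields generically $(d-2)!$ polynomial conjugacy classes in $\calM_d^1$ compatible with $\Lambda^{(i)}$. Independence across the $N$ coordinates then produces the claimed $((d-2)!)^N$ total.

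The hardest step will be confirming the genericity setup: I must exhibit an explicit Zariski open subset of the split-polynomial locus on which the $Nd$ affine multipliers are distinct, the fixed points at infinity are identifiable from the shape of their multiplier polynomials, and Fujimura's generic fiber count applies to every $F_i$ simultaneously. Each of these conditions reduces to non-vanishing of an explicit discriminant or resultant in the coefficients of $F_1, \ldots, F_N$, so the proof will conclude by collecting these finitely many open conditions.
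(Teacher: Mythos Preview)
Your proposal is correct and follows the same route as the paper: diagonalize the affine multipliers via Lemma \ref{lem_split_multipliers}, combinatorially recover the unordered collection $\{\Lambda^{(1)},\ldots,\Lambda^{(N)}\}$ from $\Sigma_1(F)$, apply Fujimura's $(d-2)!$ count coordinatewise, and absorb the $S_N$ labeling ambiguity via Lemma \ref{lem_split_permutations}; your equivalence-relation argument is a more explicit version of the paper's assertion that the eigenvalue incidence pattern among the unordered sets determines the partition into coordinate spectra uniquely. One minor correction: the eigenvalue in the direction transverse to the invariant hyperplane at infinity is $0$ (that direction is super-attracting for a polynomial endomorphism), not $d$---the paper handles this step by normalizing each $F_i$ to be monic so that the fixed points at infinity and their multiplier polynomials are known explicitly and can be stripped off before the combinatorial reconstruction.
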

    \begin{proof}
        To check that $\tau_{d,1}^N$ is finite-to-one and compute the degree we assume we are given $\Sigma_1(f)$ for some split polynomial endomorphism $f$.

        The $\sigma_{i,j}^{(1)}$ are the coefficients of the polynomial $\Sigma_1(f)$, which we can factor to get (unordered) sets of eigenvalues of the multiplier matrices. We need to determine how many ways we can split these unordered eigenvalues into multiplier spectra for the coordinate maps.

        After a change of variables, we can assume that each $F_i$ is monic. The affine fixed points are all possible cartesian products of the fixed points of $F_1,\ldots,F_N:\A^1 \to \A^1$. The fixed points at infinity are $(x_1 : \dots : x_N : 0)$ with each $x_i \in \{-1,0,1\}$, not all $0$. In particular, we know which eigenvalues come from the fixed points at infinity. For the affine fixed points, since the multiplier matrices are diagonal (Lemma \ref{lem_split_multipliers}) the multiplier of each fixed point of each coordinate function is repeated a specific number of times: $(d+1)^{N-1}$. Further, based on which eigenvalues occur in which (unordered) sets, we can split the eigenvalues into multiplier spectra for each coordinate function in only one way. Consequently, the only freedom of choice we have is to permute the coordinate functions. %Note that there is no problem with fixed points from different coordinate functions having the same eigenvalue.

        Having partitioned the eigenvalues into multiplier spectra based on coordinate functions, we can apply Fujimura's results summarized in  \cite{Fujimura2} that any given set of multiplier invariants arising from the multiplier spectra of the fixed points of a polynomial corresponds to $(d-2)!$ possible conjugacy classes of polynomials. There are $N$ sets of multiplier spectra, so we have (generically) $((d-2)!)^N$ possible sets of $N$ coordinate functions. Then these $N$ coordinate functions can be arranged in any permutation; however, all of these $N!$ permutations are conjugate (Lemma \ref{lem_split_permutations}), so the total degree of $\tau_{d,1}^N$ restricted to split polynomial endomorphisms is $((d-2)!)^N$.
    \end{proof}

    \begin{rem}
        Moreoever, we can apply Sugiyama's algorithm \cite{Sugiyama2} to determine which sets of multipliers $\sigma_{i,j}^{(1)}$ are in the image of $\tau_{d,1}^N$ and what the degree of a specific fiber is by applying the algorithm componentwise.
    \end{rem}

    Hutz-Tepper conjectured in \cite{Hutz10} that $\tau_{d,2}^1$ is injective for polynomial maps in dimension 1, i.e., including the multipliers of the 2-periodic points. That conjecture implies a similar statement here: $\tau_{d,n}^N$ for $n \geq 2$ is (generically) one-to-one when restricted to split polynomial endomorphisms.

\begin{exmp}
    For degree $2$ polynomials
    \begin{equation*}
        F = (x^2 + c, y^2 + d),
    \end{equation*}
    we have that the invariants $\{\sigma_{i,j}^{(1)}\}$ are generated by
    \begin{align*}
        \sigma_{2,2}^{(1)} &= 8(c + d) + 60\\
        \sigma_{2,3}^{(1)} &= 16(c + d) + 24.
    \end{align*}
    In particular, the pair $(c,d)$ are determined up to permutation. Recall from Milnor that the family of quadratic polynomials $f(x) = x^2+c$ is the line $(\sigma_1,\sigma_2) = (2, 4c)$ in $\calM_2^1$, so that each pair $(c,d)$ corresponds to exactly one function $F$. Since the permutations are conjugate, the multiplier mapping $\tau_{2,1}^2$ is (generically) $1$-to-$1$.
\end{exmp}

\subsubsection{Triangular polynomials}
\begin{defn}
    A \emph{triangular polynomial} is a map of the form
    \begin{align*}
        F:\A^N &\to \A^N\\
        (x_1,\ldots,x_N) &\mapsto (F_1(x_1),F_2(x_1,x_2),\ldots, F_N(x_1,\ldots,x_N))
    \end{align*}
    for polynomials $F_1,\ldots,F_N$.
\end{defn}
We are specifically interested in the case when the homogenization is an endomorphism of $\P^N$ and call such maps \emph{triangular polynomial endomorphisms}. For the homogenization to be an endomorphism, it is necessary that $\deg(F_1) = \cdots = \deg(F_N)$.

%$(f(x), g(x,y))$
%
%We could use $(f(x), g(x)y)$. Then if $\deg(f)+1 \neq \deg(g)$ we have indeterminacy $(1,0,0)$, so degrees must match. For $f$ a degree 2 polynomial, we have
%\begin{equation*}
%    [x^2 + az^2, ???, z^2]
%\end{equation*}
%\open{What do the $\sigma_{i,j}$ look like?}
%
%
%More generally, we could have
%\begin{equation*}
%    [x^2 + az^2, x^2 +y^2 + z^2 + xy + xz + yz, z^2]
%\end{equation*}
%with 2 dimensions of freedom in $\PGL_3$ left over (after fixed first and last coordinates) we could make this
%\begin{equation*}
%    [x^2 + az^2, bx^2 + g(y), z^2]
%\end{equation*}
%for some degree polynomial $g$ of degree $2$ (so a dimension 5 space?).

The following combinatorial lemma is needed to ensure we have enough equations to determine our map through interpolation.
\begin{lem} \label{lem_interpolation_count}
    Fix a positive integer $d \geq 2$. Then, for every positive integer $n \geq 1$ we have
    \begin{equation*}
        \binom{d+n}{d} \leq \sum_{i=0}^{n}d^i.
    \end{equation*}
\end{lem}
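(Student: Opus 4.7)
The plan is to reduce the inequality to a sum of termwise comparisons via the hockey-stick identity, and then verify the termwise bound by an elementary ratio-style induction.

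First I would rewrite the left-hand side using the hockey stick (or upper-summation) identity:
\begin{equation*}
    \binom{d+n}{d} = \sum_{k=0}^{n}\binom{k+d-1}{d-1}.
\end{equation*}
Combinatorially, this just splits the weak compositions of $n$ into $d+1$ parts according to the value of the last part. Once this is done, the desired inequality reduces to showing the termwise bound
\begin{equation*}
    \binom{k+d-1}{d-1} \leq d^{k} \qquad \text{for every } k \geq 0 \text{ and } d \geq 2,
\end{equation*}
after which summing over $0 \leq k \leq n$ yields the proposition.

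The termwise bound I would prove by induction on $k$ with $d$ fixed. For $k=0$ both sides equal $1$. For the inductive step, use the identity
\begin{equation*}
    \binom{k+d}{d-1} = \binom{k+d-1}{d-1}\cdot\frac{k+d}{k+1}
\end{equation*}
together with the elementary inequality $\dfrac{k+d}{k+1} \leq d$, which is equivalent to $(d-1)k \geq 0$ and hence holds for $d \geq 1$ and $k \geq 0$. Combining these gives $\binom{k+d}{d-1} \leq d \cdot \binom{k+d-1}{d-1} \leq d \cdot d^{k} = d^{k+1}$, closing the induction.

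I do not anticipate a genuine obstacle: the only mild subtlety is choosing the right reformulation so that the inequality becomes termwise, which the hockey-stick identity accomplishes cleanly. The alternative of direct induction on $n$ also works, but one is left needing $\binom{d+n}{d-1} \leq d^{n+1}$ at the inductive step, which is exactly the same termwise bound, so nothing is gained. This two-step reduction is the shortest path I see.
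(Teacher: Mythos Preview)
Your argument is correct. The approach, however, differs from the paper's. The paper proceeds by a direct induction on $n$: after the base case $n=1$, it expands $\binom{d+n+1}{d}$ by writing $(n+d+1)=(n+1)+d$, factors to recover $\binom{d+n}{d}$ plus a correction term $\tfrac{d}{n+1}\binom{d+n}{d}$, applies the induction hypothesis, and then bounds the correction by the elementary estimate $\sum_{i=0}^{n-1} d^i \leq d^n$ (using $d\geq 2$). Your route instead linearises the problem via the hockey-stick identity and proves the single clean bound $\binom{k+d-1}{d-1}\leq d^k$ termwise. Conceptually the two are close---as you observe, the increment in the paper's induction is $\binom{d+n}{d-1}$ by Pascal's rule, which is exactly your termwise quantity---but your packaging is tidier: it avoids the algebraic manipulation of factorials and the mild awkwardness of needing $n\geq 2$ in the paper's inductive step, and it makes transparent that the inequality is really a sum of termwise inequalities rather than a global one.
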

\begin{proof}
    We proceed by induction on $n$. For $n=1$, we compute
    \begin{equation*}
        d+1 = \binom{d+1}{d} = d+1.
    \end{equation*}
    Now we induct on $n$ so that $n$ is at least $2$, and we compute
    \begin{align*}
        \binom{d+(n+1)}{d} &= \frac{(n+2)\cdots (n+d+1)}{d!}
        = (n+1)\frac{(n+2)\cdots (n+d)}{d!} + d\frac{(n+2)\cdots (n+d)}{d!}\\
        &= \frac{(n+1)(n+2)\cdots (n+d)}{d!} + \frac{d}{n+1}\frac{(n+1)(n+2)\cdots (n+d)}{d!}\\
        &\leq (d^n + \cdots + d + 1) + \frac{d}{n+1}(d^n + \cdots + d + 1).
    \end{align*}
    We need
    \begin{equation*}
        \frac{1}{n+1}(d^{n+1} + \cdots + d) \leq d^{n+1},
    \end{equation*}
    which is the same as
    \begin{equation*}
        d^n + \cdots + d + 1 \leq nd^{n+1}.
    \end{equation*}
    Since $n$ is at least $2$, it is sufficient to show that
    \begin{equation*}
        d^{n-1} + \cdots + d + 1 \leq d^n.
    \end{equation*}
    Since $\frac{d^n-1}{d-1} = d^{n-1} + \cdots + d + 1$ and $d \geq 2$, this is clear.
\end{proof}

\begin{thm} \label{thm_triangular}
    The fixed point multiplier map $\tau_{d,1}^N$ is (generically) finite-to-one when restricted to triangular polynomial endomorphisms.
\end{thm}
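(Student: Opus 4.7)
The plan is to proceed by induction on $N$, with the base case $N = 1$ being Fujimura's theorem, which gives a generically $(d-2)!$-to-one fixed-point multiplier map on polynomial endomorphisms of $\P^1$. For the inductive step, let $F = (F_1, \ldots, F_N)$ be a triangular polynomial endomorphism and let $F' = (F_1, \ldots, F_{N-1})$ be its truncation, itself a triangular polynomial endomorphism of $\P^{N-1}$. The key structural observation is that the Jacobian of $F$ at any affine fixed point $P = (a_1, \ldots, a_N)$ is lower triangular, so the characteristic polynomial factors as
\begin{equation*}
\gamma_{F,P}(t) = \gamma_{F', \pi(P)}(t) \cdot \Bigl(t - \tfrac{\partial F_N}{\partial x_N}(P)\Bigr),
\end{equation*}
where $\pi(P) = (a_1, \ldots, a_{N-1})$ is the corresponding fixed point of $F'$.

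The first step would be to recover $\tau_{d,1}^{N-1}(F')$ from $\tau_{d,1}^N(F)$. For each fixed point $Q$ of $F'$, there are exactly $d$ fixed points of $F$ with $\pi(P) = Q$ (corresponding to the $d$ roots of $F_N(Q, x_N) = x_N$), and these $d$ characteristic polynomials of $F$ all share the common factor $\gamma_{F', Q}(t)$ while their remaining linear factors vary. Generically, the multiset of characteristic polynomials of $F$ therefore admits a unique partition into $d^{N-1}$ bundles of size $d$, each with a common degree-$(N-1)$ factor, and reading off these common factors yields $\tau_{d,1}^{N-1}(F')$. By the inductive hypothesis, $F'$ is then determined up to finitely many conjugacy classes in $\calM_d^{N-1}$.

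Fixing a candidate $F'$, its fixed points $\{Q\}$ are explicitly computable, and at each $Q$ the residual linear factors from the partition recover the fixed-point multiplier spectrum of the one-variable polynomial $F_N(Q, x_N)$. Write $F_N(x_1, \ldots, x_N) = \sum_{k=0}^{d} c_k(x_1, \ldots, x_{N-1}) x_N^k$ with $\deg c_k \leq d - k$. Fujimura's theorem applied fiberwise gives finitely many candidates for each $F_N(Q, \cdot)$ modulo 1D affine conjugation, hence finitely many candidate values for each $c_k(Q)$. By Lemma \ref{lem_interpolation_count}, the number $\binom{d+N-1}{d}$ of coefficients of a polynomial of degree at most $d$ in $N-1$ variables is bounded above by $1 + d + \cdots + d^{N-1}$, which is precisely the number of fixed points of $F'$ in $\P^{N-1}$ (including those at infinity). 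For generic $F'$ these fixed points are in sufficiently general position that polynomial interpolation against the values $c_k(Q)$ uniquely recovers each coefficient polynomial $c_k$.

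The main obstacle is the global gluing: the Fujimura ambiguity at each fixed point $Q$ individually provides a discrete set of local options, and one must argue that only finitely many combinations of these local choices assemble into coefficient polynomials $c_k$ of the required bounded degree. The expected mechanism is rigidity of interpolation, namely that a generic combination of candidate local values will not lie on any polynomial of degree $\leq d - k$, so the consistency constraints cut the candidate set down to a finite number. Combining this finite count with the inductive finiteness of the choices for $F'$ yields the desired generic finiteness of $\tau_{d,1}^N$ when restricted to triangular polynomial endomorphisms.
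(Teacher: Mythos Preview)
Your overall strategy matches the paper's: exploit the lower-triangular Jacobian to peel off coordinates, apply Fujimura's one-dimensional theorem fiberwise, and recover the coefficient polynomials of the last coordinate by interpolation over the fixed points of the truncated map, invoking Lemma~\ref{lem_interpolation_count} for the node count. The paper builds up $F_1 \to F_2 \to \cdots \to F_N$ directly rather than inducting downward on $N$, but the content is the same. One minor point: your claim that the multiset of characteristic polynomials admits a \emph{unique} partition into size-$d$ bundles sharing a degree-$(N-1)$ factor is stronger than needed and not obvious (each degree-$N$ polynomial has $N$ linear factors one could split off, and you have not handled the fixed points of $F$ at infinity in this step); the paper more modestly says only that there are finitely many candidate eigenvalue subsets at each stage, which suffices.

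The more substantive issue is that your final paragraph misidentifies the residual obstacle. There is no finiteness problem in gluing: Fujimura gives finitely many options at each of finitely many fixed points, so the number of combinations is already finite, and each combination yields at most one interpolant --- finiteness is automatic. What must actually be checked, and what the paper does check while you only assert it, is the opposite concern: that the interpolation is not \emph{under}-determined, i.e., that the fixed points of $(F_1,\ldots,F_{k-1})$ in $\P^{k-1}$ do not generically all lie on a hypersurface of degree $\leq d$ (otherwise a single choice of prescribed values would admit a positive-dimensional family of interpolating $c_k$). The paper closes this by observing that the containment $\Fix \subset S$ for a degree-$j$ hypersurface $S$ cuts out a closed condition on $\calM_d^N \times M(N,j)$, hence projects to a closed condition on $\calM_d^N$, and then exhibits the degree-$d$ power map --- whose fixed points include every point with $0/1$ coordinates and so cannot all satisfy any single polynomial equation --- as a witness that this closed locus is proper. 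You should replace the bare assertion of general position with this argument or an equivalent one.
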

\begin{proof}
    %For triangular polynomial endomorphisms, the first coordinate of a periodic points of period $n$ is a periodic points of period $n$ for the polynomial map $F_1:\A^1 \to \A^1$. Also the multiplier matrices are triangular for these systems
     %so one of the eigenvalues is exactly the multiplier of $f_1:\A^1 \to \A^1$. \open{Probably need to be careful about any periodic points at infinity. Maybe change variables?}

    Let $f$ be the homogenization of the triangular polynomial endomorphism $(F_1,\ldots,F_N):\A^N \to \A^N$.
    To check that $\tau_{d,1}^N$ is finite-to-one assume we are given $\Sigma_1(f)$ for some triangular polynomial endomorphism $f$.
    The $\sigma_{i,j}^{(1)}$ are the coefficients of the polynomial $\Sigma_1(f)$, which we can factor to get (unordered) sets of eigenvalues of the multiplier matrices.

    From this finite set of eigenvalues there are a finite number of subsets which could be the (affine) multipliers of fixed points for $F_1$ as an endomorphism of $\P^1$. Since $F_1$ is a polynomial, the fixed point at infinity is totally ramified and has multiplier $0$. Applying Fujimura \cite{Fujimura2}, there are finitely many possibilities for $F_1$ for each subset of eigenvalues (specifically, $(d-2)!$). Write each of these possibilities  in a monic centered form and compute the fixed points. For each fixed point $z$ of $F_1$ we can consider the single variable polynomial $F_2(z,x_2):\A^1 \to \A^1$. Again there are finitely many subsets of the eigenvalues that could be the multipliers of the fixed points of $F_2(z,x_2)$, so there are finitely many possibilities for $F_2(z,x_2)$. Again write each of these in a monic centered form in terms of the variable $x_2$. We now have essentially an interpolation problem. We know the values of the coefficients of the polynomial $F_2$ specialized at the fixed points of $F_1$. There are $d+1$ fixed points of $F_1$ and each coefficient is a polynomial of degree at most $d$ in $x_1$. Hence, the number of coefficient polynomials is no larger than the number of fixed points, so we can find the unique polynomial $F_2$.

    %\open{Why are these equations independent? Theorem 1.6 of thesis says that interpolation is correct iff the interpolation points do not lie on an algebraic hypersurface with the polynomial (SauerXu). Just need generically independent. Or can we think of this as a bunch of single variable interpolations?}

    Repeating this process for each $F_3,\ldots,F_N$, we expect that there are finitely many possibilities for $F$. Each step is a multivariate Lagrange interpolation problem. Having distinct fixed points is an open condition in $\calM_d^N$ so Lemma \ref{lem_interpolation_count} shows that generically there are ``enough'' fixed points at each stage to have enough equations to uniquely determine the coefficient polynomials.
    The question remains as to whether enough of those equations are independent. The equations to determine each $F_k$ are linear equations in the images of the Veronese embeddings $\nu_j$ for  $1 \leq j \leq d$ applied to the fixed points of $(F_1,F_2,\ldots,F_{k-1})$ (as an endomorphism of $\P^{k-1}$). So, if we are in a situation of infinitely many solutions, then for some $j$, the images of these fixed points under $\nu_j$ satisfy a hyperplane equation in the Veronese variety. In particular, the fixed points of $(F_1,F_2,\ldots,F_{k-1})$ as an endomorphism of $\P^{k-1}$ are on a degree $j$ hypersurface in $\P^{k-1}$.
    We need to see that having such a dependency is a closed condition, so it does not happen generically. Consider the product spaces $\calM_d^N \times M(N,j)$ for $1 \leq j \leq d$ where $M(N,j)$ is the moduli space of degree $j$ hypersurfaces in $\P^N$. We are considering the conditions on pairs $(f, S)$ where $\Fix(f) \subset S$. Checking whether a subvariety is contained in a hypersurface gives a closed condition. Each of these closed conditions ($1 \leq j \leq d$) projects to a closed condition on $\calM_d^N$, so to have any such relation is a closed condition on $\calM_d^N$. Therefore, as long as there exists at least one map outside of this condition, then generically there is no such hypersurface relation among the fixed points.
    The powering map of degree $d$ has fixed points with each coefficient either $0$ or a $(d-1)$-st root of unity (but not all $0$). In particular, every possible point whose coordinates are $1$ or $0$ is fixed. These cannot all satisfy a single polynomial equation. Hence, generically, the fixed points are not all on a hypersurface of degree at most $d$ and the interpolation problem gives a unqiue solution.
\end{proof}

%\begin{rem}
%    Note that just using the fixed point multipliers is enough for finite-to-one. In Example \ref{skew_example}, the fixed points were not enough for one-to-one, but the mapping \open{does??} become one-to-one when the 2-periodic multipliers were added.
%
%\begin{python}
%P.<x,y,z>=ProjectiveSpace(QQ,2)
%f=DynamicalSystem([x^3 - 3*x*z^2, y^3 + (-6*x^2 + -8*x*z + 2*z^2)*y + (6*x^3 + -3/4*x^2*z + -45/2*x*z^2),z^3])
%g=DynamicalSystem([x^3 - 3*x*z^2, y^3 + (-6*x^2 + -8*x*z + 2*z^2)*y + (6*x^3 + 3/4*x^2*z + -51/2*x*z^2),z^3])
%sf=num_sigma(f,2)
%sg=num_sigma(g,2)
%[t.abs() for t in (sf[1]-sg[1]).coefficients()]
%\end{python}
%\end{rem}

If we can effectively determine a single variable polynomial from the multipliers of its fixed points, the proof of Theorem \ref{thm_triangular} can be used to effectively compute the fibers of $\tau_1$. Recall that for a degree $2$ polynomial whose affine fixed points have multipliers $\lambda_1, \lambda_2$, we have multiplier invariants
\begin{equation*}
    (\sigma_1, \sigma_2) = (\lambda_1 + \lambda_2, \lambda_1\lambda_2) = (2,4c),
\end{equation*}
and is conjugate to $F(x) = x^2 + c$ (Milnor \cite{Milnor}).

\begin{exmp}
    Consider the multiplier polynomial $\Sigma_1(f)$ that has the pairs of multiplier matrix eigenvalues
    \begin{equation*}
        \{(0,0),(0,0), (3,21/2), (-1,3/2), (-1, 1/2), (3,-17/2), (0,-1)\}.
    \end{equation*}
    We assume these invariants are associated to a degree $2$ triangular polynomial endomorphism of the form
    \begin{align*}
        F:\P^2 &\to \P^2\\
        F(x,y,z) &= [F_1(x,z), F_2(x,y,z),z^2].
    \end{align*}
    Since the values of multipliers of (affine) fixed points of $F_1$ (as a polynomial endomorphism of $\P^1$) must occur in two distinct pairs of eigenvalues, the multipliers of the affine fixed points of $F_1$ must be in the set
    \begin{equation*}
        \{0,3,-1\}
    \end{equation*}
    Of the three possible pairs (plus the multiplier of $0$ at infinity), only one such pair gives a quadratic polynomial. It has Milnor parameters
    \begin{equation*}
        (\sigma_1,\sigma_2,\sigma_3) = (2, -3, 0) = (2,4c,0)
    \end{equation*}
    so we have $c=-3/4$ and $F_1(x,1) = x^2-3/4$ which (as an endomorphism of $\P^1$) has fixed points $\{-1/2, 3/2, \infty\}$.

    Associated to the fixed point $-1/2$, which has multiplier $-1$, are the eigenvalues $\{1/2,3/2,0\}$. These are the other value of the pairs of eigenvalues containing $-1$. Taking the possible pairs of these values, we get
    \begin{align*}
        (\lambda_1,\lambda_2) \to (\sigma_1, \sigma_2)\\
        (1/2, 3/2) \mapsto (2,3/4)\\
        (1/2, 0) \mapsto (1/2,0)\\
        (3/2, 0) \mapsto (3/2,0)
    \end{align*}
    The only one of these that is a quadratic polynomial is $(2,3/4)$ which corresponds to $F_2(1/2,y,1)=y^2 + 3/16$.

    Associated to the fixed point $3/2$, which has multiplier $3$, are the eigenvalues $\{21/2, -17/2\}$. The corresponding Milnor parameters are $(2, -357/4, 0)$. This gives the quadratic polynomial $F_2(-3/2,y,1) = y^2 - 357/16$.

    Associated to the fixed point at infinity, which has multiplier $0$, are the eigenvalues $\{0,3,-1\}$. The corresponding Milnor parameters are $(2,-3,0)$. This is the polynomial $F_2(1,y,0) = y^2-3/4$.

    We can conjugate so that the second coordinate of $F$ is a polynomial in normal form (which we will consider as lacking monomials $\{y^{d-1}x, y^{d-1}\}$ and with $y^d$ monic). Write the second coordinate as
    \begin{equation*}
        F_2(x,y,z) = y^2 + ax^2 + bxz + cz^2.
    \end{equation*}
    We need to solve
    \begin{align*}
        %(x^2-3/4,&y^2+3/16,y^2-357/16,y^2 +3xy) y^2-3/4)\\
        F_2(-1/2,y,1) &: 1/4a - 1/2b+c = 3/16\\
        F_2(3/2,y,1) &: 9/4a + 3/2b+c = -357/16\\
        F_2(1,y,0) &: a  = -3/4.
    \end{align*}
    The unique solution is $a=-3/4,b=-21/2, c=-39/8$, which gives the map
    \begin{equation*}
        F = [x^2 - 3/4z^2 : -3/4x^2 + y^2 - 21/2xz - 39/8z^2 : z^2].
    \end{equation*}
%    \begin{align*}
%        f &=DynamicalSystem([x^2-3/4*z^2, y^2 + 3*x*y + 7*y*z + 5*z^2, z^2])\\
%        f1 &= DynamicalSystem([x^2 - 3/4*z^2 , -3/4*x^2 + y^2 - 21/2*x*z - 39/8*z^2 , z^2])
%    \end{align*}
%    which are conjugate (to normal form) by
%    \begin{equation*}
%        m=matrix(QQ,3,3,[1,0,0, -3/2,1,-7/2, 0,0,1])
%    \end{equation*}
\begin{code}
\begin{python}
sage: R.<a,b,c>=QQ[]
sage: l1=a + 3/4
sage: l2=a*9/4 + b*3/2 + c + 357/16
sage: l3=a*1/4 - b*1/2 + c - 3/16
sage: I=R.ideal([l1,l2,l3])
sage: I.variety()
[{a: -3/4, b: -21/2, c: -39/8}]
\end{python}
\end{code}
\end{exmp}

\subsubsection{Monic Polynomials}
A \emph{regular polynomial endomorphism} is an endomorphism of $\P^N$ that leaves a hyperplane invariant.

Similar to Ingram \cite{Ingram5}, we restrict attention to morphisms $f : \P^2 \to \P^2$ of degree $d = 2$ with a totally invariant hyperplane $H \subset \P^2$ such that the restriction of $f$ to $H$ is the $d$th power map in some coordinates. This defines a subvariety $MP^2_2 \subset \calM_2^2$ of the space of coordinate-free endomorphisms of $\P^2$. Ingram calls such a map a \emph{monic polynomial} and we adopt his terminology. We next determine a model for conjugacy classes in $MP_2^2$.

Let $(x,y,z)$ be the coordinates of $\P^2$. We conjugate to move the invariant hyperplane to the hyperplane $z=0$ and, since we are assuming we have a monic polynomial, $\{(1,1,0), (0,1,0), (1,0,0)\}$ are fixed points. So the map is of the form
\begin{equation*}
    f(x,y,z) = (x^2+a_1xz+a_2yz + a_3z^2 : b_1xz+y^2+b_2yz+b_3z^2 : z^2].
\end{equation*}
We can conjugate by an element of the form
\begin{equation*}
    m = \begin{pmatrix}1&0&a\\
    0&1&b\\
    0&0&1\end{pmatrix}
\end{equation*}
to maintain the form of $f$.
\begin{code}
\begin{python}
sage: R.<a,a1,a2,a3,b,b1,b2,b3>=QQ[]
sage: P.<x,y,z>=ProjectiveSpace(R,2)
sage: #generic monic
sage: f=DynamicalSystem([x^2+a1*x*z+a2*y*z + a3*z^2, b1*x*z+y^2+b2*y*z+b3*z^2,z^2])
sage: m=matrix(FractionField(R), 3,3,[1,0,a, 0,1,b, 0,0,1])
sage: g=f.conjugate(m)
sage: g.normalize_coordinates();g
\end{python}
\end{code}
In particular,
\begin{align*}
    f^m = [x^2 &+ (2a + a_1)xz + a_2yz + (a^2 + aa_1 + a_2b - a + a_3)z^2 \\
        &: y^2 + b_1xz + (2b + b_2)yz + (b^2 + ab_1 + bb_2 - b + b_3)z^2 : z^2).
\end{align*}
So we have one more degree of freedom in this family. Choosing to also fix the point $(1,0,1)$ removes this freedom and forces $a_3 = -a_1$ and $b_3 =-b_1$. %In particular there are constant $c,d$ so that
%\begin{align*}
%    (2a + a_1) &= c\\
%    (a^2 + aa_1 + a_2b - a + a_3) &= -c\\
%    (2b + b_2) &=d\\
%    (b^2 + ab_1 + bb_2 - b + b_3) &= -d.
%\end{align*}
Thus, we can assume that our map is of the form
\begin{equation}\label{eq_normal_form}
    f(x,y,z) = [x^2 + a_1xz + a_2yz - a_1z^2 :  y^2+ b_1xz + b_2yz - b_1z^2 : z^2].
\end{equation}
With this form we can explicitly compute $\Sigma_1(f)$ and the relations among the multiplier invariants.

\begin{code}
\begin{python}
sage: R.<a1,a2,b1,b2>=QQ[]
sage: P.<x,y,z>=ProjectiveSpace(FractionField(R),2)
sage: f=DynamicalSystem([x^2 + a1*x*z + a2*y*z + (-a1)*z^2, b1*x*z + y^2 + b2*y*z + (-b1)*z^2,z^2])
sage: print f([1,1,0]), f([1,0,0]), f([0,1,0]), f([1,0,1])
sage: s=sigma(f,1, chow=False) #computation finishes in 3s

sage: # split them up
sage: w,t = s.parent().gens()
sage: N=2
sage: D=7
sage: L=[[0 for _ in range(N*D+1)] for _ in range(D+1)]
sage: for i in range(D-1,-1,-1):
sage:     for j in range(N*(D-i)+1):
sage:         if i == 0:
sage:             if j == 0:
sage:                 c = R(s.constant_coefficient())
sage:             else:
sage:                 c = R(s.subs({w:0}).coefficient({t:j}).subs({w:0,t:0}))
sage:         else:
sage:             c = R(s.coefficient({w:i,t:j}).subs({w:0,t:0}))
sage:         if c.degree()!=0:
sage:             if c.lc() <= 0:
sage:                 c = (-1)*c
sage:             L[D-i][N*(D-i)-j]=c

sage: #check dependence
sage: B=[]
sage: for i in range(len(L)):
sage:     ll = L[i]
sage:     for j in range(len(ll)):
sage:         l = ll[j]
sage:         I = R.ideal(B)
sage:         if l != 0:
sage:             if l not in I:
sage:                 print "good: ",(i,j)
sage:                 B.append(l)
sage:                 print l
sage:             else:
sage:                 print "redundant: ",(i,j)
sage: len(B)
\end{python}
\end{code}
To ease readability, we drop the superscript $(1)$ from the $\sigma_{i,j}^{(1)}$ in the following theorem.
\begin{thm}\label{thm_monic_poly}
    The image of $\tau_{2,1}^2$ restricted to monic polynomials of the form \eqref{eq_normal_form} is generated by
    \begin{align*}
        \sigma_{1,2} &= 8a_2b_1 + 4\\
        \sigma_{2,2} &= -2a_1^2 - 4a_1b_1 + 36a_2b_1 - 4a_2b_2 - 2b_2^2 - 4a_1 + 4a_2 -4b_1 + 4b_2 + 60\\
        \sigma_{2,3} &= 8a_1^2b_1 + 16a_1a_2b_1 + 16a_2b_1b_2 + 8a_2b_2^2 - 4a_1^2 + 8a_1b_1 + 40a_2b_1 - 24a_2b_2 - 4b_2^2 - 8a_1\\
        &+ 16a_2 + 8b_2 + 24\\
        \sigma_{2, 4} &= -4a_1^3b_1 + 18a_2^2b_1^2 - 8a_1a_2b_1b_2 - 2a_1^2b_2^2 - 4a_2b_2^3 - 4a_1^2b_1 + 24a_1a_2b_1 + 4a_1^2b_2 + 8a_2b_1b_2\\
         &- 4a_1b_2^2 + 20a_2b_2^2 - 4a_1^2 + 20a_2b_1 + 8a_1b_2 - 32a_2b_2 - 4b_2^2 - 8a_1 + 16a_2 + 8b_2\\
        \sigma_{3,3} &= 32a_1^2b_1 + 64a_1a_2b_1 - 8a_2^2b_1 - 8a_2b_1^2 + 64a_2b_1b_2 + 32a_2b_2^2 - 32a_1^2 + 128a_2b_1 - 128a_2b_2\\
         &- 32b_2^2 - 64a_1 + 96a_2 - 32b_1 + 64b_2 + 176.
    \end{align*}
    Further, the image of the (restricted) map
    \begin{align*}
        \tau_{2,1}^2: MP_2^2 &\to \A^5\\
        [f] &\mapsto (\sigma_{1,2},\sigma_{2,2},\sigma_{2,3}, \sigma_{2,4},\sigma_{3,3})
    \end{align*}
    is the hypersurface defined by the vanishing of
    \begin{align*}
        &36\sigma_{1,2}^5 - 18\sigma_{1,2}^4\sigma_{2,2} + 2\sigma_{1,2}^3\sigma_{2,2}^2 + 17712\sigma_{1,2}^4 - 8384\sigma_{1,2}^3\sigma_{2,2} + 1292\sigma_{1,2}^2\sigma_{2,2}^2 - 64\sigma_{1,2}\sigma_{2,2}^3 - 2456\sigma_{1,2}^3\sigma_{2,3}\\
        &+ 476\sigma_{1,2}^2\sigma_{2,2}\sigma_{2,3} + 73\sigma_{1,2}^2\sigma_{2,3}^2 +16\sigma_{1,2}\sigma_{2,2}\sigma_{2,3}^2
        +16\sigma_{1,2}^3\sigma_{2,4} + 792\sigma_{1,2}^3\sigma_{3,3} - 196\sigma_{1,2}^2\sigma_{2,2}\sigma_{3,3}\\
        &+ 8\sigma_{1,2}\sigma_{2,2}^2\sigma_{3,3} - 54\sigma_{1,2}^2\sigma_{2,3}\sigma_{3,3} - 4\sigma_{1,2}\sigma_{2,2}\sigma_{2,3}\sigma_{3,3} + 9\sigma_{1,2}^2\sigma_{3,3}^2 + 197280\sigma_{1,2}^3 - 105984\sigma_{1,2}^2\sigma_{2,2}\\
        &+ 22464\sigma_{1,2}\sigma_{2,2}^2
         -1792\sigma_{2,2}^3 + 48256\sigma_{1,2}^2\sigma_{2,3} - 12064\sigma_{1,2}\sigma_{2,2}\sigma_{2,3} - 11336\sigma_{1,2}\sigma_{2,3}^2 +1472\sigma_{2,2}\sigma_{2,3}^2\\
        &+ 512\sigma_{2,3}^3 - 51392\sigma_{1,2}^2\sigma_{2,4} + 20480\sigma_{1,2}\sigma_{2,2}\sigma_{2,4} - 2048\sigma_{2,2}^2\sigma_{2,4}
         +10240\sigma_{1,2}\sigma_{2,3}\sigma_{2,4}- 2048\sigma_{2,2}\sigma_{2,3}\sigma_{2,4}\\
        &- 512\sigma_{2,3}^2\sigma_{2,4} + 3008\sigma_{1,2}^2\sigma_{3,3} - 2400\sigma_{1,2}\sigma_{2,2}\sigma_{3,3} + 480\sigma_{2,2}^2\sigma_{3,3} + 2992\sigma_{1,2}\sigma_{2,3}\sigma_{3,3} - 240\sigma_{2,2}\sigma_{2,3}\sigma_{3,3}\\
        &- 256\sigma_{2,3}^2\sigma_{3,3}
        - 2560\sigma_{1,2}\sigma_{2,4}\sigma_{3,3} + 512\sigma_{2,2}\sigma_{2,4}\sigma_{3,3} + 256\sigma_{2,3}\sigma_{2,4}\sigma_{3,3} - 40\sigma_{1,2}\sigma_{3,3}^2 - 32\sigma_{2,2}\sigma_{3,3}^2\\
        &+ 32\sigma_{2,3}\sigma_{3,3}^2 - 32\sigma_{2,4}\sigma_{3,3}^2 + 2411904\sigma_{1,2}^2 - 1307136\sigma_{1,2}\sigma_{2,2}
        + 171968\sigma_{2,2}^2 + 268416\sigma_{1,2}\sigma_{2,3}\\
        &+ 16064\sigma_{2,2}\sigma_{2,3} - 38768\sigma_{2,3}^2 - 613632\sigma_{1,2}\sigma_{2,4} + 122880\sigma_{2,2}\sigma_{2,4} + 61440\sigma_{2,3}\sigma_{2,4}\\
        &+ 85376\sigma_{1,2}\sigma_{3,3} - 32320\sigma_{2,2}\sigma_{3,3} + 4768\sigma_{2,3}\sigma_{3,3} - 15360\sigma_{2,4}\sigma_{3,3}+ 1232\sigma_{3,3}^2 + 20517376\sigma_{1,2}\\
        &- 5436928\sigma_{2,2} - 459776\sigma_{2,3} - 1844224\sigma_{2,4}
        + 532480\sigma_{3,3} + 56702976.
    \end{align*}
%    \begin{align*}
%        &36\ell_1^5 + 18\ell_1^4\ell_2 + 2\ell_1^3\ell_2^2 + 17712\ell_1^4 + 8384\ell_1^3\ell_2 + 1292\ell_1^2\ell_2^2 + 64\ell_1\ell_2^3 - 2456\ell_1^3\ell_3 - 476\ell_1^2\ell_2\ell_3 + 73\ell_1^2\ell_3^2 - 16\ell_1\ell_2\ell_3^2 - 16\ell_1^3\ell_4\\
%        &- 792\ell_1^3\ell_5 - 196\ell_1^2\ell_2\ell_5 - 8\ell_1\ell_2^2\ell_5 + 54\ell_1^2\ell_3\ell_5 - 4\ell_1\ell_2\ell_3\ell_5 + 9\ell_1^2\ell_5^2 + 197280\ell_1^3 + 105984\ell_1^2\ell_2 + 22464\ell_1\ell_2^2 + 1792\ell_2^3\\
%        &+ 48256\ell_1^2\ell_3 + 12064\ell_1\ell_2\ell_3 - 11336\ell_1\ell_3^2 - 1472\ell_2\ell_3^2 + 512\ell_3^3 + 51392\ell_1^2\ell_4 + 20480\ell_1\ell_2\ell_4 + 2048\ell_2^2\ell_4 - 10240\ell_1\ell_3\ell_4\\
%        &- 2048\ell_2\ell_3\ell_4 + 512\ell_3^2\ell_4 - 3008\ell_1^2\ell_5 - 2400\ell_1\ell_2\ell_5 - 480\ell_2^2\ell_5 - 2992\ell_1\ell_3\ell_5 - 240\ell_2\ell_3\ell_5 + 256\ell_3^2\ell_5 - 2560\ell_1\ell_4\ell_5\\
%        &- 512\ell_2\ell_4\ell_5 + 256\ell_3\ell_4\ell_5 - 40\ell_1\ell_5^2 + 32\ell_2\ell_5^2 + 32\ell_3\ell_5^2 + 32\ell_4\ell_5^2 + 2411904\ell_1^2 + 1307136\ell_1\ell_2 + 171968\ell_2^2 + 268416\ell_1\ell_3\\
%        &- 16064\ell_2\ell_3 - 38768\ell_3^2 + 613632\ell_1\ell_4 + 122880\ell_2\ell_4 - 61440\ell_3\ell_4 - 85376\ell_1\ell_5 - 32320\ell_2\ell_5 - 4768\ell_3\ell_5 - 15360\ell_4\ell_5\\
%        &+ 1232\ell_5^2 + 20517376\ell_1 + 5436928\ell_2 - 459776\ell_3 + 1844224\ell_4 - 532480\ell_5 + 56702976
%    \end{align*}
    %Furthermore, every set of $\sigma_{i,j}$ satisfying this relation corresponds to a valid map and vice-versa.
\end{thm}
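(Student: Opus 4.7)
The proof is fundamentally computational, built on the machinery of Section \ref{sect_computing}. First, apply Algorithm \ref{Algo:min} to the generic map $f$ in the normal form \eqref{eq_normal_form}, with $(a_1, a_2, b_1, b_2)$ treated as indeterminates over $\Q$. This produces $\Sigma_1(f)$ as an explicit bivariate polynomial in $(w, t)$ with coefficients in $\Q[a_1, a_2, b_1, b_2]$. Reading off the coefficients via the sign convention of \eqref{eq_sigma} yields every $\sigma_{i,j}^{(1)}$ as a polynomial in the parameters, in particular the five formulas stated in the theorem. The derivation of \eqref{eq_normal_form} in the preceding paragraphs shows that this normal form uses the full $\PGL_3$-freedom on $MP_2^2$ (up to a finite ambiguity from relabeling the distinguished fixed points), so the map $(a_1, a_2, b_1, b_2) \mapsto [f] \in MP_2^2$ is generically finite-to-one.

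Second, to identify the image in $\A^5$, form the elimination ideal
\[
    J = (s_1 - \sigma_{1,2}^{(1)},\ s_2 - \sigma_{2,2}^{(1)},\ s_3 - \sigma_{2,3}^{(1)},\ s_4 - \sigma_{2,4}^{(1)},\ s_5 - \sigma_{3,3}^{(1)}) \subset \Q[a_1, a_2, b_1, b_2, s_1, \ldots, s_5],
\]
and compute a lexicographic Groebner basis with the parameters $a_i, b_i$ ordered above the $s_k$. The unique basis element involving only $s_1, \ldots, s_5$ is (up to sign) the hypersurface equation $H$ displayed in the theorem, showing the image is contained in $V(H)$. For the reverse inclusion, $MP_2^2$ is irreducible of dimension $4$ and, by Corollary \ref{cor_monic_finite}, $\tau_{2,1}^2$ restricted to $MP_2^2$ is generically finite-to-one, so its image is an irreducible $4$-dimensional closed subset of $\A^5$. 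A direct factorization check shows $H$ is irreducible over $\Q$, so $V(H)$ is also irreducible of dimension $4$, forcing equality. The assertion that the five invariants \emph{generate} the image then follows from the same framework: extending $J$ by any other $s_6 - \sigma_{i,j}^{(1)}$ and eliminating the parameters expresses $s_6$ algebraically in terms of $s_1, \ldots, s_5$ modulo $H$, and the dimension count rules out further independent generators.

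The main obstacle is the practicality of the elimination Groebner basis in nine variables: lex Groebner bases can blow up catastrophically in the worst case, and the target polynomial $H$ itself has several hundred monomials. Two structural features should keep the problem tractable. Each $s_k$ appears linearly in its defining equation, so the $s_k$ can be eliminated by straightforward back-substitution before any expensive parameter elimination is invoked. Moreover, the generators $\sigma_{i,j}^{(1)}$ have modest total degree (at most $4$) and sparse monomial support. Should a direct lex computation remain prohibitive, computing a degree-reverse-lex basis and converting to lex via FGLM is a standard fallback.
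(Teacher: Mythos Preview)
Your approach is essentially the paper's: compute $\Sigma_1(f)$ for the generic map \eqref{eq_normal_form} via Algorithm \ref{Algo:min}, read off the $\sigma_{i,j}^{(1)}$ as polynomials in $a_1,a_2,b_1,b_2$, and then take the elimination ideal of $(s_k - \sigma_{i,j}^{(1)})$ to obtain the hypersurface relation. The paper's proof is literally those two sentences plus an appeal to the Elimination and Extension Theorems of \cite[Chapter~3]{CLO}.

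There is, however, a genuine gap in your reverse-inclusion argument. You invoke Corollary \ref{cor_monic_finite} to conclude that $\tau_{2,1}^2\vert_{MP_2^2}$ is generically finite-to-one, hence that the image has dimension $4$ and therefore fills out the irreducible hypersurface $V(H)$. But in the paper Corollary \ref{cor_monic_finite} is deduced \emph{from} Theorem \ref{thm_monic_poly}: its proof begins by taking $X=V(H)$ as already established and uses dominance of $\A^4\to X$ to get the fiber-dimension equation $\dim\phi^{-1}(y)=\dim\A^4-\dim X=0$. So your argument is circular as written. The paper sidesteps this entirely by citing the Extension Theorem at the elimination step, which directly guarantees that (generic) points of $V(H)$ lift back to parameter values $(a_1,a_2,b_1,b_2)$; no separate dimension or irreducibility argument is needed. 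If you prefer your route, you can break the circle cheaply by verifying finiteness on a single explicit fiber (one specialization of the $\sigma$'s yielding a zero-dimensional ideal in the $a_i,b_j$), which is independent of the hypersurface statement; but you should not cite Corollary \ref{cor_monic_finite} here.
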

\begin{proof}
    The independence/dependence of the $\sigma_{i,j}$ is a simple ring calculation given their forms from $\Sigma_1(f)$, and it was performed in Sage.

    To get the hypersurface equation, we take the elimination ideal in the variables $(\ell_1,\ldots,\ell_5)$ of $(\sigma_{1,2} - \ell_1, \ldots,\sigma_{3,3}-\ell_5)$. The statement then follows from the Elimination and Extension Theorems \cite[Chapter 3]{CLO}.
\end{proof}

\begin{code}
\begin{python}
sage: RR.<l1,l2,l3,l4,l5>=PolynomialRing(R, order='lex')
sage: phi=RR.flattening_morphism()
sage: pJ=phi.codomain().ideal([phi(B[0]-l1), phi(B[1]-l2), phi(B[2]-l3), phi(B[3]-l4), phi(B[4]-l5)])
sage: G=pJ.elimination_ideal(phi.codomain().gens()[:4])
sage: G.gens()
\end{python}
\begin{comment}
[36*l1^5 + 18*l1^4*l2 + 2*l1^3*l2^2 + 17712*l1^4 + 8384*l1^3*l2 + 1292*l1^2*l2^2 + 64*l1*l2^3 - 2456*l1^3*l3 - 476*l1^2*l2*l3 + 73*l1^2*l3^2 - 16*l1*l2*l3^2 - 16*l1^3*l4 - 792*l1^3*l5 - 196*l1^2*l2*l5 - 8*l1*l2^2*l5 + 54*l1^2*l3*l5 - 4*l1*l2*l3*l5 + 9*l1^2*l5^2 + 197280*l1^3 + 105984*l1^2*l2 + 22464*l1*l2^2 + 1792*l2^3 + 48256*l1^2*l3 + 12064*l1*l2*l3 - 11336*l1*l3^2 - 1472*l2*l3^2 + 512*l3^3 + 51392*l1^2*l4 + 20480*l1*l2*l4 + 2048*l2^2*l4 - 10240*l1*l3*l4 - 2048*l2*l3*l4 + 512*l3^2*l4 - 3008*l1^2*l5 - 2400*l1*l2*l5 - 480*l2^2*l5 - 2992*l1*l3*l5 - 240*l2*l3*l5 + 256*l3^2*l5 - 2560*l1*l4*l5 - 512*l2*l4*l5 + 256*l3*l4*l5 - 40*l1*l5^2 + 32*l2*l5^2 + 32*l3*l5^2 + 32*l4*l5^2 + 2411904*l1^2 + 1307136*l1*l2 + 171968*l2^2 + 268416*l1*l3 - 16064*l2*l3 - 38768*l3^2 + 613632*l1*l4 + 122880*l2*l4 - 61440*l3*l4 - 85376*l1*l5 - 32320*l2*l5 - 4768*l3*l5 - 15360*l4*l5 + 1232*l5^2 + 20517376*l1 + 5436928*l2 - 459776*l3 + 1844224*l4 - 532480*l5 + 56702976]

Using this relation to pick a random set of sigmas. We get 8?12?16? in the inverse image.

\begin{verbatim}
J=R.ideal([B[0]-4, B[1]-4, B[2]-4, B[3]-4, B[4]-416])
J.dimension()
S.<b1,b2,b4,b5>=PolynomialRing(QQ,order='lex')
phi=R.hom([b1,b2,b4,b5],S)
J3=S.ideal([phi(b) for b in J.gens()])
G=J3.groebner_basis()
for g in G:
    print g.factor()
\end{verbatim}

\begin{verbatim}
picking the first 4, there are always two choices for the last

[0, 2, 14, 13] (16) * (9*s5 - 1478) * (15*s5 - 3014)
[1, 2, 8, 6] (3) * (s5 - 228) * (571*s5 - 151140)
[1, 10, 10, 5] (3) * (s5 - 278) * (667*s5 - 213866)
[2, 2, 1, 6] (36) * (s5 - 303) * (41*s5 - 13623)
[2, 12, 11, 11] (4) * (s5 - 343) * (569*s5 - 211687)
[3, 5, 9, 4] (29*s5 - 10551) * (61*s5 - 21227)
[6, 2, 8, 6] (4) * (s5 - 500) * (457*s5 - 212996)
[6, 4, 10, 10] (4) * (s5 - 506) * (521*s5 - 246986)
[6, 6, 6, 7] (4) * (s5 - 542) * (481*s5 - 243422)
[6, 7, 2, 14] (4) * (3*s5 - 1690) * (171*s5 - 90646)
[6, 10, 4, 8] (4) * (s5 - 584) * (505*s5 - 275768)
[6, 14, 2, 9] (4) * (s5 - 626) * (529*s5 - 310034)
[8, 4, 11, 13] (16) * (s5 - 555) * (149*s5 - 91875)
[10, 6, 2, 14] (12) * (7*s5 - 5542) * (29*s5 - 20194)
[11, 0, 12, 13] (7) * (s5 - 654) * (383*s5 - 285582)
[11, 0, 14, 11] (7) * (s5 - 740) * (383*s5 - 247280)
[11, 8, 12, 0] (s5 - 850) * (2521*s5 - 1806878)
[11, 11, 9, 8] (s5 - 753) * (2777*s5 - 2436657)
[12, 0, 0, 10] (64) * (s5 - 752) * (37*s5 - 31856)
[12, 0, 12, 4] (2560) * (s5 - 818) * (s5 - 701)
[12, 1, 9, 2] (32) * (4*s5 - 2885) * (19*s5 - 16102)
[12, 2, 6, 0] (1152) * (s5 - 878) * (2*s5 - 1483)
[12, 3, 9, 6] (32) * (2*s5 - 1717) * (41*s5 - 30222)
[12, 6, 6, 8] (128) * (s5 - 899) * (21*s5 - 16238)
[12, 7, 1, 3] (32) * (3*s5 - 2839) * (25*s5 - 20042)
[12, 9, 3, 10] (32) * (2*s5 - 1879) * (43*s5 - 34802)
[12, 9, 5, 13] (32) * (7*s5 - 6466) * (13*s5 - 10415)
[12, 12, 0, 12] (256) * (s5 - 980) * (11*s5 - 9300)
[13, 0, 7, 8] (s5 - 885) * (2713*s5 - 2093745)
[13, 10, 1, 6] (s5 - 1025) * (2777*s5 - 2427385)
[14, 2, 10, 12] (36) * (s5 - 826) * (89*s5 - 83114)
[14, 2, 11, 1] (4) * (7*s5 - 5739) * (103*s5 - 98531)
\end{verbatim}
\end{comment}
\end{code}

%\begin{rem}
%    Would expect the same to hold for general polynomial maps with fixed points $\{(0,1,1), (0,1,0), (1,0,0), (1,0,1)\}$ but the computation is too slow:
%    \begin{equation*}
%        [x^2+a_2xy+a_5xz+a_5yz + (-a_5)z^2, b_2xy+b_5xz+y^2+b_5yz+(-b_5)z^2,z^2].
%    \end{equation*}
%%\begin{python}
%%R.<a1,a2,a3,a4,a5,a6,b1,b2,b3,b4,b5,b6,c,d>=QQ[]
%%P.<x,y,z>=ProjectiveSpace(R,2)
%%f=DynamicalSystem([a1*x^2+a2*x*y+a3*x*z+a4*y^2+a5*y*z + a6*z^2, b1*x^2+b2*x*y+b3*x*z+b4*y^2+b5*y*z+b6*z^2,z^2])
%%f=DynamicalSystem([1*x^2+a2*x*y+a5*x*z+0*y^2+a5*y*z + (-a5)*z^2, 0*x^2+b2*x*y+b5*x*z+1*y^2+b5*y*z+(-b5)*z^2,z^2])
%%f(P(0,1,1)), f(P(0,1,0)), f(P(1,0,0)), f(P(1,0,1))
%%\end{python}
%\end{rem}

\begin{cor}\label{cor_monic_finite}
    The fixed point multiplier map $\tau_{2,1}^2$ is (generically) finite-to-one when restricted to monic polynomials of the form \eqref{eq_normal_form}.
\end{cor}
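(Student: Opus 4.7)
The plan is to deduce Corollary \ref{cor_monic_finite} from Theorem \ref{thm_monic_poly} by a direct dimension count. First I would identify the parameter space. The normalization explained in the discussion preceding equation \eqref{eq_normal_form} fixes the totally invariant hyperplane $z = 0$, forces $(1,1,0), (1,0,0), (0,1,0)$ to be fixed (this is the monic condition), and uses the remaining $2$-dimensional conjugation freedom (by the upper-triangular subgroup with unipotent columns sending $(0,0,1)$ to arbitrary points) to also fix $(1,0,1)$. The resulting normal form \eqref{eq_normal_form} is parameterized by $(a_1,a_2,b_1,b_2) \in \A^4$, with only finitely many redundancies coming from the residual stabilizer (e.g., a possible involution exchanging the $x$ and $y$ axes). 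Hence the subvariety $MP_2^2 \subset \calM_2^2$ has dimension at most $4$.

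Next I would assemble the restricted map
\begin{equation*}
    \tau_{2,1}^2 : MP_2^2 \to \A^5, \qquad [f] \mapsto (\sigma_{1,2},\sigma_{2,2},\sigma_{2,3},\sigma_{2,4},\sigma_{3,3}),
\end{equation*}
regarded as a morphism out of $\A^4$ via the parameters $(a_1,a_2,b_1,b_2)$. By Theorem \ref{thm_monic_poly}, the five invariants listed are generators for the image in $\Q[\calM_d^N]$ of the invariants in the multiplier polynomial $\Sigma_1(f)$, and the image of $\tau_{2,1}^2$ is equal to the hypersurface $V \subset \A^5$ cut out by the explicit degree-seven relation recorded there. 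Since $V$ is a hypersurface in $\A^5$, it has pure dimension $4$.

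Now the finiteness is immediate: the morphism $\tau_{2,1}^2 \colon \A^4 \to V$ is dominant (its image equals $V$ by Theorem \ref{thm_monic_poly}), the source and the target both have dimension $4$, and so by the theorem on dimensions of fibers of a dominant morphism (see, e.g., \cite[Chapter 3]{CLO}), the generic fiber has dimension $4-4 = 0$, i.e., is finite. Composing with the generically finite-to-one projection $\A^4 \to MP_2^2$ shows that $\tau_{2,1}^2$ restricted to $MP_2^2$ is itself generically finite-to-one.

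The non-trivial content of the argument is entirely contained in Theorem \ref{thm_monic_poly}, whose proof requires a substantial Gröbner basis elimination to produce the hypersurface equation; once that has been done, the corollary follows at once from a dimension count. The only point of care is checking that the parameter map $\A^4 \to MP_2^2$ is itself generically finite (so that dimensions really do agree), which follows because the normalization procedure used only finitely many choices of coordinates respecting the fixed-point configuration.
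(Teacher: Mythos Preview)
Your proposal is correct and follows essentially the same route as the paper's own proof: both invoke Theorem \ref{thm_monic_poly} to identify the image as a $4$-dimensional hypersurface in $\A^5$, observe that the parameter map from $\A^4$ is dominant onto it, and conclude via the fiber-dimension theorem that the generic fiber is $0$-dimensional. Your additional remarks about the normalization and the residual finite stabilizer (and the factorization through $MP_2^2$) are more than the paper includes, but they are harmless elaboration rather than a different strategy.
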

\begin{proof}
    Consider the map
    \begin{align*}
        \phi: \A^4 \to X \subset \A^5\\
        (a_1,a_2,b_1,b_2) &\mapsto (\sigma_{1,2},\sigma_{2,2},\sigma_{2,3},\sigma_{2,4},\sigma_{3,3}),
    \end{align*}
    where $X$ is the hypersurface defined in Theorem \ref{thm_monic_poly}. Then $\phi$ is a dominant morphism of affine varieties. Hence, there is an open set $U \subset X$ such that
    \begin{equation*}
        \dim(\phi^{-1}(y)) = \dim(\A^4) - \dim(X) = 0, \quad \forall y \in U.
    \end{equation*}
    In particular, $\tau_{2,1}^2$ is finite-to-one.
\end{proof}
Note that Corollary \ref{cor_monic_finite} does not give the generic degree of $\tau_{2,1}^2$ and the computations to compute the degree did not finish on the machine being used. However, in some partial calculations (fixing a subset of the $\sigma_{i,j}^{(1)}$), it appears the generic degree should be $8$. Although, there were a number of closed subsets where the degree is $12$. Note also that when we used only the invariants defined in Doyle-Silverman \cite{JDoyle2} ($\{\sigma_{D_1,j}^{(1)} : 1 \leq j \leq ND_1\}$), then the computations similar to Theorem \ref{thm_monic_poly} did not finish in a reasonable amount of time even though one would still expect to find five generators and a hypersurface requirement.

\end{document}